\documentclass{amsart}
\usepackage{graphicx}
\usepackage{amssymb,amscd,amsthm,amsxtra}
\usepackage{latexsym}
\usepackage{epsfig}
\usepackage{mathtools}
\usepackage{esint}
\usepackage{color}

\vfuzz2pt 
\hfuzz2pt 
\newtheorem{thm}{Theorem}[section]
\newtheorem{cor}[thm]{Corollary}
\newtheorem{lem}[thm]{Lemma}
\newtheorem{prop}[thm]{Proposition}
\theoremstyle{definition}
\newtheorem{defn}[thm]{Definition}
\theoremstyle{remark}
\newtheorem{rem}[thm]{Remark}
\numberwithin{equation}{section}

\newcommand{\R}{\mathbb R}

\newcommand{\eps}{\epsilon}

\newcommand{\p}{\partial}

\newcommand{\comment}[1]{}

\begin{document}

\title{Perturbative estimates for the one-phase Stefan Problem}
\author{D. De Silva}
\address{Department of Mathematics, Barnard College, Columbia University, New York, NY 10027}
\email{\tt  desilva@math.columbia.edu}
\author{N. Forcillo}
\address{Dipartimento di Matematica, Universit\`a di Bologna, Piazza di Porta San Donato, 5, 49126, Bologna, Italy}
\email{\tt nicolo.forcillo2@unibo.it}
\author{O. Savin}
\address{Department of Mathematics, Columbia University, New York, NY 10027}\email{\tt  savin@math.columbia.edu}
\thanks{N.F. is partially supported by INDAM-GNAMPA-2019 project: {\it Propriet\`a di regolarit\`a delle soluzioni viscose con applicazioni a problemi di frontiera libera,} and by the project: \textit{GHAIA Horizon 2020 MCSA RISE programme grant No 777822}}
\begin{abstract} We provide perturbative estimates for the one-phase Stefan free boundary problem and obtain the regularity of flat free boundaries via a linearization technique in the spirit of the elliptic counterpart established in \cite{D}.
\end{abstract}

\maketitle

\section{Introduction}

In this paper we are concerned with perturbative estimates for the one-phase Stefan problem,
\begin{equation}\label{SP}\begin{cases}
u_t = \triangle u  &\text{in $(\Omega \times (0,T])  \cap\{u>0\},$}\\
u_t=|\nabla u|^2 &\text{on $(\Omega  \times (0,T] )\cap \p \{u>0\}$,}
\end{cases}\end{equation}
with $\Omega \subset \R^n$, $u:\Omega \times [0,T] \to \R$, $u \ge 0$.

The classical one-phase Stefan problem describes the phase transition between solids and liquids, such as the melting of the ice (see for example \cite{F}, \cite{R}). In this setting $u$ represents the temperature of the liquid, and the region $\{u = 0\}$ the unmelted region of ice. 

The main object of interest is the behavior of the free boundary $\p \{u > 0\}$. 
In problems of this type 
free boundaries may not regularize instantaneously. A two dimensional example in which a Lipschitz free boundary preserves corners can be found for instance in \cite{CS}.  Athanasopoulos,  Caffarelli, and Salsa studied the regularizing properties of the free boundary under reasonable assumptions in the more general setting of the two-phase Stefan problem. In \cite{ACS1} they showed that Lipschitz free boundaries in space-time become smooth provided a nondegeneracy condition holds, while in \cite{ACS2} the same conclusion was established for sufficiently ``flat" free boundaries. 
The techniques are based on the original work of Caffarelli in the elliptic case \cite{C1,C2}. 

A related result is due to S. Choi and I. Kim who showed in \cite{CK} that solutions regularize instantaneously if the initial free boundary is locally Lipschitz with bounded Lipschitz constant and the initial data has subquadratic growth.

In this paper we study the regularity of flat free boundaries for \eqref{SP} based on perturbation arguments leading to a linearization of the problem, which are in the spirit of the elliptic counterpart developed by the first author in \cite{D}. Our result is basically equivalent to the previously mentioned flatness result in \cite{ACS2}. The techniques in \cite{D} are very flexible and have been widely generalized to a variety of free boundary problems, including two-phase inhomogeneous problems, ``thin" free boundary problems, minimization problems (see for example \cite{DFS}, \cite{DR}, \cite{DSV}). The methods of the current paper are suitable to further extensions as well.

Our main theorem roughly states that a solution to the Stefan problem in a ball of size $\lambda$ in space-time which is of size $\lambda$ and has a ``flat free boundary" in space, must have smooth free boundary in the interior provided that a necessary nondegeneracy condition holds. The nondegeneracy condition for $u$ requires that $u$ is bounded below by a small multiple of $\lambda$ at some point in the domain at distance $\lambda$ from the free boundary. Precisely, we assume that $u:\Omega \times [0,T] \to \R^+$ solves \eqref{SP}
in the viscosity sense. This means that $u$ is continuous and its graph cannot be touched by above (resp. below) at a point $(x_0,t_0)$ in a 
parabolic cylinder $B_r(x_0) \times (t_0-r^2,t_0],$
by the graph  of a classical strict supersolution $\varphi^+$ (resp. subsolution). By a classical strict supersolution we mean that $\varphi(x,t) \in C^2$, $\nabla_x \varphi \ne 0$, and it solves \begin{equation}\label{Strict}\begin{cases}
\varphi_t > \triangle \varphi  & \text{in ($\Omega \times (0,T])  \cap\{\varphi>0\},$}\\
\varphi_t > |\nabla \varphi|^2 &\text{on $(\Omega \times (0,T]) \cap \p \{\varphi >0\}$.}
\end{cases}\end{equation} Similarly we can define a strict classical subsolution.

Throughout the paper, given a space-time function, $\nabla, \Delta,$ and $D^2$ are computed with respect to the space variable $x.$

The rigorous statement of the main theorem is as follows.

\begin{thm}\label{Main}
Fix a constant $K$ (large) and let $u$ be a solution to the one-phase Stefan problem \eqref{SP} in $B_{\lambda} \times [-K^{-1}\lambda,0]$ for some $\lambda \le 1$. 
Assume that
$$|u| \le K \lambda, \quad \quad u(x_0,t) \ge K^{-1} \lambda \quad \mbox{for some} \quad x_0 \in B_{\frac 34\lambda}.$$ 
There exists $\eps_0$ depending only on $K$ and $n$ such that if, for each $t$, $\p_x \{u>0\}$ is $\eps_0$-flat in $B_\lambda$, then the free boundary $\p \{u>0\}$ (and $u$ up to the free boundary) is smooth in $B_{\frac\lambda 2} \times [-(2K)^{-1}\lambda,0]$.
\end{thm}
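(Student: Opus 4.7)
The plan is to adapt the compactness–linearization strategy of \cite{D} to \eqref{SP}. First I would use the Stefan rescaling $\tilde u(x,t) = \lambda^{-1} u(\lambda x, \lambda t)$ (which preserves the free boundary condition while inserting an extra factor $\lambda^{-1}$ in front of the Laplacian, reflecting the near quasi-static regime forced by the hypotheses) to reduce to $\lambda = 1$. The flatness and nondegeneracy assumptions, combined with parabolic interior estimates, then trap $u$ between two translates of a one-dimensional reference traveling-wave profile $U_\beta$ of the Stefan problem, with $\beta \ge c(K) > 0$; that is, $u$ is \emph{$\eps$-flat around $U_\beta$} in $B_1 \times (-c,0]$.

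The heart of the argument would be an \emph{improvement of flatness} lemma: if $u$ is $\eps$-flat around $U_\beta$ with $\eps \le \eps_0$, then for some universal $\rho \in (0,1)$ there exist a unit vector $\nu$ with $|\nu - e_n| \le C\eps$ and a new slope $\beta'$ close to $\beta$ so that $u$ is $(\eps/2)$-flat around $U_{\beta'}$ in the direction $\nu$ in the smaller cylinder $B_\rho \times (-c\rho, 0]$. I would prove this by compactness and contradiction: assume there exist $\eps_k \to 0$ and $\eps_k$-flat solutions $u_k$ failing the conclusion, and normalize $v_k(x,t) := \eps_k^{-1}\bigl(u_k(x,t) - U_{\beta_k}(x_n,t)\bigr)$. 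Parabolic Harnack-type inequalities, both in the interior and up to the nearly flat free boundary, should yield uniform H\"older estimates for $v_k$, so along a subsequence $v_k \to v$ locally uniformly in a fixed half-cylinder.

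Next I would identify the linearized problem for $v$. In the interior of $\{U_\beta > 0\}$, $v$ solves a linear parabolic equation obtained by linearizing $\p_t u - \triangle u = 0$ around $U_\beta$. Linearizing the free boundary condition $\p_t u = |\nabla u|^2$ around $U_\beta$ at the limiting flat free boundary $\{x_n = 0\}$ produces an oblique-type condition coupling $\p_t v$ and $\p_{x_n} v$ with coefficients determined by $\beta$. Standard parabolic regularity for this linear problem gives $v \in C^{1,\alpha}$ at the origin, and the first-order Taylor expansion of $v$ there supplies exactly the rotation $\nu$ and corrected slope $\beta'$ that contradict the presumed failure of the improvement.

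Iterating the improvement of flatness at geometric scales would yield $C^{1,\alpha}$ regularity of the free boundary (and of $u$ up to it), after which classical parabolic Schauder bootstrap (or a hodograph transform fixing the boundary) produces smoothness in $B_{\lambda/2} \times [-(2K)^{-1}\lambda, 0]$. The principal obstacle is the compactness step: proving Harnack and boundary-Harnack inequalities for $u - U_\beta$ that are uniform in $\eps$ requires barrier constructions carefully calibrated to the Stefan geometry near an only $\eps$-flat free boundary. A secondary subtlety is updating the reference $U_\beta$ from one scale to the next so that the nondegeneracy $\beta \ge c > 0$ is preserved throughout the iteration---which is precisely the role played by the pointwise lower bound $u(x_0,t) \ge K^{-1}\lambda$.
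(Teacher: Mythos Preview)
Your high-level plan---pass from flat free boundaries to flat solutions, then prove an improvement of flatness by linearizing to an oblique-derivative problem and iterate---is indeed the paper's strategy. But there is a genuine gap: the uniformity in $\lambda$. After your rescaling the interior equation is $\lambda \tilde u_t = \Delta \tilde u$ with $\lambda\in(0,1]$ still present, so you have not ``reduced to $\lambda=1$'' in any useful sense; the flatness threshold $\eps_0$ must be independent of $\lambda$. In your compactness argument you would have to allow $\lambda_k\to 0$ together with $\eps_k\to 0$, and then the interior equation degenerates while the free-boundary condition keeps its hyperbolic time scale $t\sim x$. The linearized problem is
\[
\lambda v_t = \mathrm{tr}\bigl(A(t)D^2v\bigr)\ \text{in }\{x_n>0\},\qquad v_t=\gamma(t)\cdot\nabla v\ \text{on }\{x_n=0\},
\]
and one needs Schauder and Harnack estimates for it that are \emph{uniform as $\lambda\to 0$}. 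The paper achieves this by introducing a distance $d_\lambda$ that interpolates between the parabolic distance away from $\{x_n=0\}$ and the Euclidean one near it, and by proving all H\"older estimates with respect to $d_\lambda$; your appeal to ``parabolic Harnack-type inequalities'' does not capture this mixed scaling, and standard parabolic estimates degenerate as $\lambda\to 0$.

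A second, related issue concerns your reference profiles. The paper does not approximate by a fixed-slope traveling wave $U_\beta$ but by linear-in-$x$ functions $l_{a,b}(x,t)=a(t)\cdot x+b(t)$ with $b'=g(a)$ and $a_n(t)$ allowed to drift slowly in time ($|a_n'|\le\delta\eps\lambda^{-2}$). Over the hyperbolic cylinders $Q_\rho^+\times(-\rho,0]$ used in the iteration---whose time extent is $\rho/\lambda$ in parabolic units---the normal slope of $u$ at the free boundary can genuinely move, so a fixed-$\beta$ approximation need not be $O(\eps)$-accurate within a single step. The paper also carries out the iteration after a hodograph transform fixing the boundary (turning the problem into a quasilinear equation with oblique boundary condition), rather than directly on the free boundary problem as you propose; this is a genuine methodological difference, though either route could in principle succeed once the $\lambda$-uniform linear theory is in place.
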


Here we use the notation $\p_x \{u>0\}$ to denote the boundary in $\R^n$ of $\{u(\cdot, t)>0)\}$, with $t$ being fixed. By $\p_x \{u>0\}$ is $\eps_0$-flat in $B_\lambda$ we understand that, for each $t$, $\p_x \{u>0\} \cap B_\lambda$ is trapped in a strip 
of width $\eps_0 \lambda$ (the region between two parallel hyperplanes at distance $\eps_0 \lambda$ from each other), and $u=0$ on one side of this 
strip while $u>0$ on the other side.

The assumption that $u$ is of size $\lambda$ in a domain of size $\lambda$ around the free boundary is natural, since this 
eventually holds for all classical solutions by choosing $\lambda$ small. 
We point out that in Theorem \ref{Main} the behavior of the solution depends strongly on the value of $\lambda$. If we scale the domain to unit size and keep the function $u$ of size 1, then the rescaled function
$$ (x,t) \quad \mapsto \quad \frac 1 \lambda u(\lambda x,\lambda t) ,\quad \quad (x,t) \in B_1 \times [-K^{-1},0],$$
solves a Stefan problem with possibly large diffusion coefficient $\lambda^{-1}$
\begin{equation}\label{SPr}\begin{cases}
\lambda u_t = \triangle u  &\text{in $(B_1 \times (-K^{-1},0]) \cap\{u>0\},$}\\
u_t=|\nabla u|^2 &\text{on $(B_1  \times (-K^{-1},0])\cap \p \{u>0\}$}.
\end{cases}\end{equation}
Our theorem states that nondegenerate solutions of size 1 of \eqref{SPr} which have $\eps_0$- flat free boundaries in $B_1$ are smooth up to the free 
boundary. We remark that $\eps_0$ is independent of $\lambda$, which means that we need to obtain uniform estimates in $\lambda$ for the oscillation of the free 
boundaries of solutions of \eqref{SPr}. Our results show that the free boundary has a uniform $C^{1,\alpha}$ bound in space. On the other hand, the estimates for $u$ in the set where it is positive depend on the parameter $\lambda$. The strategy is to approximate $u$ with a family of explicit functions $l_{a,b}$ which in the direction perpendicular to the free boundary depend on $\lambda$ while on the tangential directions to the free boundary are independent of the parameter $\lambda$.

Formally as $\lambda \to 0^+$, a solution $u$ to \eqref{SPr} solves the Hele-Shaw equation. Estimates for this problem by similar methods as ours were obtained by H. Chang-Lara and N. Guillen in [CG].

To prove our main theorem, we show that if a solution $u$ satisfies the hypotheses of Theorem \ref{Main} then, after a convenient dilation, the flatness 
assumption can be extended to the whole function $u$ instead of just the free boundary. Then Theorem \ref{Main} follows from the following result.

\begin{thm}\label{Main2}
Fix a constant $K$ (large) and let $u$ be a solution to the one-phase Stefan problem \eqref{SP} in $B_{2\lambda} \times [-2\lambda,0]$ for some $\lambda \le 
1$. 
Assume that $0\in \p \{u>0\},$ and
 $$ a_n(t) \, \left (x_n - b(t)-\eps_1 \lambda \right)^+ \, \, \le u \, \, \le a_n(t) \, \left(x_n - b(t)+\eps_1 \lambda \right)^+,$$
 with
 $$K^{-1}\le a_n \le K, \quad \quad |a_n'(t)| \le \lambda^{-2}, \quad \quad b'(t)=-a_n(t),$$
for some small $\eps_1$ depending only on $K$ and $n$. Then in $B_\lambda \times[-\lambda ,0]$ the free boundary $\p \{u>0\}$ is a $C^{1,\alpha}$ graph in 
the $x_n$ direction.
\end{thm}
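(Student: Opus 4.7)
The plan is to prove Theorem \ref{Main2} by following the linearization strategy of \cite{D}, which reduces the question to an \emph{improvement of flatness} dichotomy iterated at all dyadic scales. Writing $P(x,t) := a_n(t)(x_n - b(t))^+$ for the approximating planar traveling wave, one aims to show that there exist universal constants $\rho \in (0,1)$ and $C$, together with updated parameters $(\tilde a_n(t), \tilde b(t))$ of the same type as in the hypotheses and a unit direction $e$ with $|e - e_n| \le C \eps_1$, such that in the smaller parabolic cylinder $B_{\rho \lambda} \times [-\rho \lambda, 0]$ the solution $u$ is trapped between the tilted analogues $\tilde a_n(t)\bigl(x \cdot e - \tilde b(t) \pm \eps_1\lambda/2\bigr)^+$. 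Dyadic iteration then produces Cauchy sequences of directions and heights converging at a geometric rate, which is precisely the $C^{1,\alpha}$ regularity asserted by the theorem.

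Three ingredients feed the improvement of flatness. (i) A parabolic Harnack-type inequality for the normalized deviation
\[
\tilde w := \frac{u - P}{\eps_1 \lambda\, a_n},
\]
saying that pointwise control of $\tilde w$ from above or below at one point propagates to a definite fraction of the parabolic cylinder. This is obtained by comparison with explicit classical sub- and super-solutions constructed from $P$ perturbed by a small multiple of a heat-kernel-like bump (and cut off near the free boundary by $P$ itself), together with the viscosity comparison principle for \eqref{SP}. Iteration yields H\"older continuity of $\tilde w$ up to the free boundary, with bounds independent of $\eps_1$ and $\lambda$. (ii) A compactness/linearization argument: if the improvement fails along a sequence $\eps_{1,k} \to 0$, the $\tilde w_k$ admit a locally uniform limit $\tilde w_\infty$ that solves a linear one-phase problem on the half-space $\{x_n > 0\}$ --- the heat equation $\lambda (\tilde w_\infty)_t = \Delta \tilde w_\infty$ (degenerating to $\Delta \tilde w_\infty = 0$ in the Hele-Shaw limit $\lambda = 0$) coupled to an oblique boundary condition on $\{x_n = 0\}$ obtained by linearizing $u_t = |\nabla u|^2$ around $P$. (iii) A $C^{1,\alpha}$ boundary estimate for this linearized problem supplies a first-order Taylor polynomial of $\tilde w_\infty$ at the origin; reading off the coefficients produces the adjusted $(\tilde a_n, \tilde b, e)$ and contradicts the assumed failure of improvement.

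The principal obstacle is securing uniformity of the estimates in $\lambda \in (0,1]$. Under the rescaling of the introduction the interior equation reads $\lambda u_t = \Delta u$, which interpolates between the parabolic regime ($\lambda \sim 1$) and the Hele-Shaw limit ($\lambda \to 0^+$), so both the Harnack inequality and the $C^{1,\alpha}$ estimate for the linear problem must admit constants independent of $\lambda$. The approximating family is designed so that the $\lambda$-dependent corrections --- precisely what the budget $|a_n'(t)| \le \lambda^{-2}$ buys --- act only in the normal variable and are absorbed by the freedom in $(a_n, b)$, while the tangential part of the linearized operator is $\lambda$-independent. Once this uniformity is in place, the three ingredients above combine through the standard compactness argument of \cite{D} to yield the improvement of flatness and, by iteration, Theorem \ref{Main2}.
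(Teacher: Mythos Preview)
Your high-level strategy---improvement of flatness via linearization, iterated at dyadic scales, with careful attention to uniformity in $\lambda$---matches the paper's, and the linearized oblique-derivative problem you arrive at is essentially \eqref{LP0}. Two technical choices, however, differ from the paper's route. First, the paper does not work directly with $\tilde w = (u-P)/(\eps_1\lambda\, a_n)$ on the moving set $\{u>0\}$; it first applies a Hodograph transform (Subsection~\ref{Hod}) to recast \eqref{SP} as the quasilinear oblique-derivative problem \eqref{NLP} on the \emph{fixed} half-space $\{x_n\ge 0\}$, so that domain convergence and the free boundary condition never have to be handled in a limiting procedure (the residual multi-valuedness near $\{x_n=0\}$ is dealt with once, via the abstract Harnack property $H(\sigma)$). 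Second, the paper does \emph{not} prove the improvement of flatness by compactness/contradiction as in \cite{D}. Instead it solves the linear Dirichlet problem \eqref{LP0} with data close to $w$ (Proposition~\ref{DiPr}), proves the quantitative comparison $|v-w|\le C\delta$ by explicit barriers (Proposition~\ref{CP}, Corollary~\ref{Com}), and reads off the improved $l_{\tilde a,\tilde b}$ from the Schauder estimate Proposition~\ref{estimate} for $v$. The engine behind the uniform-in-$\lambda$ estimates is the hybrid distance $d_\lambda$ of Section~5.1, which interpolates between parabolic scaling away from $\{x_n=0\}$ and hyperbolic scaling on it; this is the precise mechanism replacing your informal ``tangential part is $\lambda$-independent'' remark. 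Your compactness route is viable (it is closer in spirit to \cite{CG} for Hele-Shaw), but it would force you to extract subsequences in $\lambda$ as well as in $\eps_1$ and to control convergence of the free boundaries; the paper's Hodograph-plus-direct-comparison approach sidesteps both issues.
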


The assumption that $b'=-a_n(t)$ means that the approximating linear functions in $x$, $a_n(t)(x_n-b(t))^+$, satisfy the free boundary 
condition, while $|a_n'(t)| \le \lambda^{-2}$ respects the parabolic scaling of the interior equation and represents that $a_n$ can change at most $o(1)$ in a time interval of length $o(\lambda^2)$. 

We remark that it suffices to prove Theorem \ref{Main2} under the more relaxed hypotheses 
\begin{equation}\label{lala0}
\lambda \le \lambda_0 \quad \mbox{ and} \quad  |a_n'(t)| \le c_0 
\lambda^{-2},
\end{equation}
with $\lambda_0$, $c_0$ small depending on $K$, $n$. We end up in this setting by working in balls of size $\tau \lambda$ 
with $\tau$ sufficiently small, and then relabel $\tau \lambda$ by $\lambda$ and $\eps_1 \tau^{-1}$ by $\eps_1$. 

Theorem \ref{Main2} applies, for example, when $u$ is a perturbation of order $o(1)\lambda$ of a traveling wave solution
$$ (e^{a x_n + a^2 t} -1)^+, \quad \quad K^{-1} \le a \le K.$$
In this case we choose $a_n(t)=a$, $b(t)=-at$, and consider $\lambda \le \lambda_0$ small so that the difference between the approximating linear part 
$a_n(t)(x_n-b(t))$ and the exact solution above is less than $\frac 12 \eps_1 \lambda$ in $B_\lambda$.  

The proof of Theorem \ref{Main2} is based on linearization techniques. The linearized equation in our setting has the form of an oblique derivative parabolic problem
\begin{equation}\label{LPI}\begin{cases}
\lambda v_t = tr(A(t) D^2v) &\text{in $\{x_n>0\}$,}\\
v_t = \gamma(t) \cdot \nabla v &\text{on $\{x_n=0\}$,}
\end{cases}
\end{equation}
with $A(t)$ uniformly elliptic and $\gamma_n>0$. An important task in our analysis is to develop Schauder-type estimates for equation \eqref{LPI} with respect to an appropriate distance $d_\lambda$ and to capture 
both features of the mixed parabolic/hyperbolic scaling.

The paper is organized as follows. In the next section we show that Theorem \ref{Main} can be deduced from Theorem \ref{Main2}. In Section 3, we use a Hodograph transform to obtain an equivalent quasilinear parabolic equation with oblique derivative boundary condition. In the following section, we state an improvement of flatness result Proposition \ref{P!} for solutions of such nonlinear problem, then we show how this implies Theorem \ref{Main2}. The proof of Proposition \ref{P!} is presented in Section 5, and it relies on various H\"older estimates (with respect to the appropriate distance) for solutions to the linearized problem associated to the nonlinear problem. Sections 6 and 7 are devoted to the proofs of such H\"older estimates, while Section 8 focuses on the one dimensional linear problem, which plays an essential role. The last section contains some general technical results on solutions to the linear problem.

  \section{From flat free boundaries to flat solutions.}

In this section, we show that Theorem \ref{Main} can be reduced to Theorem \ref{Main2}.

We assume that the function $u$ satisfies the $\eps_0$-flatness hypothesis of the free boundary from Theorem \ref{Main} for some $\lambda \le 1$, 
and that $(0,0)$ is a free boundary point. 
Precisely, by $\p_x \{u>0\}$ is $\eps_0$-flat in $B_\lambda$ we understand that, for each $t$, there exists a direction $\nu$ such that $$\p_x \{u(\cdot, t)>0\} \cap B_\lambda \subset \{|(x-x_0) \cdot \nu| \leq \eps_0 \lambda\},$$
and 
\begin{align*}
&u=0 \quad \text{in $\{(x-x_0) \cdot \nu \leq -\eps_0 \lambda\},$}\\
&u>0 \quad \text{in $\{(x-x_0) \cdot \nu \geq \eps_0 \lambda\}$}.
\end{align*}

First, we show that in a smaller domain $B_{\eta \lambda} \times [- \eta \lambda,0]$ the whole graph of $u$ is $\eta^\beta$- flat, for some small $\beta$, provided that $
\eps_0 \le c(\eta,K)$. Then, in this domain the hypotheses of Theorem \ref{Main2} are satisfied by choosing $\eta$ sufficiently small.

We work with the parabolic rescaling of the function $u$ which is defined in $B_1 \times  [-(K\lambda)^{-1},0]$ and keeps the function $u$ of unit size: 
$$ (x,t) \quad \mapsto \quad \frac 1 \lambda u(\lambda x,\lambda^2 t) ,\quad \quad (x,t) \in B_1 \times  [-(K\lambda)^{-1},0].$$
By abuse of notation we denote this rescaling by $u$, and then $u$ solves a Stefan problem with possibly small speed coefficient $\lambda$,
\begin{equation}\label{SPr2}\begin{cases}
 u_t = \triangle u  & \text{in ($B_1 \times (-(K\lambda)^{-1},0]) \cap\{u>0\},$}\\
u_t=\lambda |\nabla u|^2 & \text{on $(B_1 \times (-(K\lambda)^{-1},0]) \cap  \p \{u>0\} $.}
\end{cases}\end{equation}

We prove the following main lemma. Universal constants only depend on $n,K$. As usual, in the body of the proofs, constants denoted by $C$ may change from line to line.

\begin{lem}\label{gfla}
Assume that $u$ solves \eqref{SPr2}, 
$$|u| \le K , \quad \quad u(x_0,t) \ge K^{-1}  \quad \mbox{for some} \quad x_0 \in B_{3/4},$$
$$0 \in \p_x\{u(\cdot,0)>0\}, \quad \quad \mbox{ and $\p_x\{u(\cdot,t)>0\}$ is $\eps_0$-flat in $B_1$.}$$
Then for all small $\eta>0$ we have
 $$a_n(t) \left(x_n -  b(t) -\eta^{1+\beta}\right) ^+ \, \le \, u \, \le \, a_n(t) \left(x_n -b(t) + \eta^{1+\beta} \right)^+ \quad \mbox{in }B_{\eta} \times [-\lambda^{-1}\eta,0],$$ 
with $\beta=1/20$ and for $c,C>0$ universal,
$$c \le a_n(t) \le C, \quad |a_n'(t)| \le \eta^{\beta-2}, \quad \quad b'(t)=- \lambda a_n(t), \quad b(0)=0,$$
provided that $\eps_0 \le c(\eta,K)$. 
\end{lem}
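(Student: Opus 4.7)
The plan is to bootstrap the per-slice spatial flatness of the free boundary into flatness of the entire graph of $u$ in $B_\eta\times[-\lambda^{-1}\eta,0]$ via parabolic boundary Harnack. After rotating so that $\nu(0)=e_n$, I first use that the positive set $\{u(\cdot,t)>0\}$ is monotone increasing in $t$ (since $u_t=\lambda|\nabla u|^2\ge 0$ on the free boundary) together with the $\eps_0$-flatness at each time slice and the condition $(0,0)\in\p\{u>0\}$ to conclude that the normal direction $\nu(t)$ of the flatness strip stays within $O(\eps_0)$ of $e_n$ and its height within $O(\eps_0)$ of $0$, uniformly on the relevant time interval. This produces a continuous function $b(t)$ with $b(0)=0$ such that the free boundary in $B_{3/4}$ is contained in the strip $\{|x_n-b(t)|\le\eps_0\}$.

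Starting from the non-degeneracy point $x_0\in B_{3/4}$ with $u(x_0,t)\ge K^{-1}$, I would chain the parabolic Harnack inequality in the positive phase through a connected family of parabolic balls in $\{x_n>C\eps_0\}\cap B_{3/4}$ to propagate the lower bound $u\ge c>0$ throughout the non-degenerate half. Combined with a Lipschitz upper bound via comparison with a shifted half-space caloric supersolution of the form $C(x_n-b(t)+\eps_0)^+$, this produces the two-sided Lipschitz sandwich $c(x_n-b(t))^+\le u\le C(x_n-b(t))^+$ on a sub-cylinder, say $B_{1/2}\times[-(2K\lambda)^{-1},0]$.

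The central step is the parabolic boundary Harnack principle applied to the caloric function $u$ on the space-time subgraph $\{x_n>b(t)\}$: because $|b'|\le C\lambda\le C$ uniformly in $\lambda\in(0,1]$, this is a parabolic Lipschitz domain with universal constant, and the quotient $u(x,t)/(x_n-b(t))^+$ is H\"older-$\alpha$ continuous up to the boundary with a universal exponent $\alpha>0$. The boundary trace $a_n(t):=\lim_{x_n\to b(t)^+} u(0',x_n,t)/(x_n-b(t))$ then lies in $C^\alpha$ with $c\le a_n\le C$, and the Stefan condition $u_t=\lambda|\nabla u|^2$ applied to the expansion $u=a_n(t)(x_n-b(t))^+ + o(x_n-b(t))$ on the free boundary yields $b'(t)=-\lambda a_n(t)$. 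Rescaling the H\"older estimate to $B_\eta\times[-\lambda^{-1}\eta,0]$ gives $|u-a_n(t)(x_n-b(t))^+|\le C\eta^{1+\alpha}$, which delivers the required sandwich for any $\beta\le\alpha/2$, in particular $\beta=1/20$. To secure the pointwise bound $|a_n'|\le \eta^{\beta-2}$ I replace $a_n$ with its time mollification on scale $\delta\sim\eta^{(1+\beta)/\alpha}$, turning the $C^\alpha$ seminorm into $|a_n'|\le C\delta^{\alpha-1}$, which is $\le\eta^{\beta-2}$ with room to spare at $\beta=1/20$; the flatness sandwich is only worsened by an additional $C\delta^\alpha\le C\eta^{1+\beta}$.

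The principal obstacle is to obtain all constants and the H\"older exponent $\alpha$ \emph{universally}, i.e., independent of $\lambda\in(0,1]$, which is the mixed parabolic/hyperbolic scaling that becomes acute as $\lambda\to 0$ (the Hele-Shaw regime). Two pieces must be checked with care: (i) the Harnack chain in Step 2 uses only universally many parabolic balls, because the nondegenerate half of $B_{3/4}$ is connected on unit scale independently of $\lambda$; and (ii) the boundary Harnack on $\{x_n>b(t)\}$ applies with universal constants since the space-time Lipschitz constant of the free boundary is $\le\lambda\le 1$. The preliminary step of pinning down $\nu(t)$ and $b(t)$ over the long time interval $[-(K\lambda)^{-1},0]$ is also delicate and relies essentially on the monotone expansion of $\{u>0\}$ to rule out nontrivial rotation of the flatness direction.
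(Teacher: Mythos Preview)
Your proposal has a genuine gap at the step where you derive the ODE $b'(t)=-\lambda a_n(t)$. You write that the Stefan condition applied to the expansion $u=a_n(t)(x_n-b(t))^+ + o(x_n-b(t))$ on the free boundary yields $b'=-\lambda a_n$. This is circular: that expansion presupposes that the free boundary is the smooth graph $\{x_n=b(t)\}$ and that $u$ has a classical normal derivative there, neither of which is known at this stage. All you have is that the free boundary sits in the strip $\{|x_n-b(t)|\le\eps_0\}$; within that strip it may be quite irregular, and $b$ itself is not known to be differentiable. In the paper the actual free-boundary location on the $x_n$-axis does \emph{not} satisfy the ODE. Instead one \emph{defines} $\tilde b$ by $\tilde b'=-\lambda a_n$, $\tilde b(0)=0$, and then proves $|b-\tilde b|\le C\eta^{1+\beta}$ by constructing explicit Stefan sub/supersolutions --- the evolving planes $a_n(t)(x_n-d(t))^+$ with $d'$ slightly displaced from $-\lambda a_n$, bent by a quadratic $|x'|^2-2nx_n^2$ to force strict interior inequalities --- and running a first-contact argument over the long interval $[-\lambda^{-1}\eta,0]$. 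That barrier step is the substantive content of the lemma and is absent from your outline.

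There are secondary issues as well. You apply boundary Harnack on $\{x_n>b(t)\}$, but $u$ is not caloric there; it is caloric only in $\{u>0\}$, which differs from $\{x_n>b(t)\}$ by the $\eps_0$-strip and is not known to be spatially Lipschitz, and $(x_n-b(t))^+$ is not caloric either since $b'\ne 0$, so the quotient is not a ratio of two caloric functions vanishing on a common Lipschitz boundary. The paper sidesteps this by comparing $u$ with the solution of the heat equation in a half-space $\{x_n>C\tau\}$ strictly inside $\{u>0\}$, absorbing the $O(\tau)$ mismatch into the error, and then applying \emph{interior} Schauder to extract the linear part $a\,x_n$. Finally, pinning the strip height $b(t)$ to within $O(\eta)$ of the origin over the interval $[-\lambda^{-1}\eta,0]$ does not follow from monotone expansion alone: monotonicity of $\{u>0\}$ bounds the free boundary from one side but gives no control on how far it has receded at earlier times; an a priori speed bound of the form $|b(t)|\le C(\lambda|t|+\eps_0)$ is needed, and that is itself a barrier estimate (Lemma~\ref{uso} in the paper).
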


When we rescale the conclusion back to the original coordinates, we obtain that the hypotheses of Theorem \ref{Main2} are satisfied in the cylinder $B_{\eta \lambda} \times [-\eta \lambda,0]$ with $\eps_1=\eta^\beta$.

We start by proving a result about the location of the free boundary in time.

\begin{lem}\label{uso}
Assume $u$ solves \eqref{SPr2} in $B_2 \times [- K^{-1},1]$ and that $0\le u \le K $. If $u(x,0)=0$ in $B_1$, then
\begin{equation}\label{ulec}
u(x,t) \le C (|x|- 1)^+, \quad \mbox{if} \quad t \in [-(2K)^{-1},0],
\end{equation}
and
\begin{equation} \label{ulec2}
u(x,t)=0 \quad \mbox{ if $|x| <1-C \lambda$,\hspace{0.2cm}$t \in [0,1]$,} 
\end{equation}
with $C>0$ universal.
\end{lem}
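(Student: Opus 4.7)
The plan handles \eqref{ulec} and \eqref{ulec2} separately.

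For \eqref{ulec}, the key input is the monotonicity of the positive set $\{u(\cdot,t)>0\}$ in $t$, which is a built-in feature of the one-phase Stefan problem (the free-boundary condition $u_t=\lambda|\nabla u|^2 \ge 0$ forces the boundary to move only outward of $\{u>0\}$). Combined with $u(\cdot,0)=0$ on $B_1$, this forces $u\equiv 0$ on $\bar B_1 \times [-K^{-1},0]$, giving zero lateral data on $\partial B_1 \times [-K^{-1},0]$. Since $u$ is a viscosity subsolution of the heat equation on $B_2$ (across the free boundary the Laplacian picks up a non-negative singular part), I would compare $u$ with the caloric function $v$ in the annular cylinder $(B_2 \setminus \bar B_1) \times [-K^{-1},0]$ having data $v=0$ on $\partial B_1$, $v=K$ on $\partial B_2$, and $v(\cdot,-K^{-1})=K$; the maximum principle gives $u\le v$. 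A parabolic boundary Hopf-type estimate, available once the relaxation time $(2K)^{-1}$ has elapsed, then yields $v(x,t)\le C(K,n)(|x|-1)^+$ for $t\in[-(2K)^{-1},0]$, establishing \eqref{ulec}.

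For \eqref{ulec2}, I would build a classical strict supersolution $\phi$ of the Stefan problem on $B_2\times[0,1]$ whose free boundary moves radially inward into $B_1$ by at most $O(\lambda)$ in time $1$. Set
$$\phi(x,t) = A\,\psi(|x|) + C\lambda t, \qquad \phi^+ := \max(\phi,0),$$
where $\psi(r)=1-r^{2-n}$ for $n\ge 3$ (or $\psi(r)=\log r$ for $n=2$) is the radial harmonic profile vanishing on $\partial B_1$. Because $\Delta\phi = A\,\Delta\psi = 0$, the interior supersolution inequality $\phi_t\ge \Delta\phi$ reduces to the trivial $C\lambda\ge 0$. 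On the moving free boundary $\{|x|=R(t)\}$ with $R(t)=(1+C\lambda t/A)^{-1/(n-2)}$, one has $|\nabla\phi|=A|\psi'(R(t))|$, which is universally bounded as long as $R(t)\ge 1/2$ (true for $t\in[0,1]$ once $\lambda$ is small enough; otherwise \eqref{ulec2} is vacuous). Hence the free-boundary supersolution inequality $\phi_t\ge \lambda|\nabla\phi|^2$ becomes $C\lambda\ge \lambda\cdot(\text{universal})$, satisfied by taking $C=C(K,n)$ large. The constant $A=A(K,n)$ is chosen large enough that $\phi^+(\cdot,0)\ge u(\cdot,0)$ on $B_2$ --- using \eqref{ulec} at $t=0$ together with the concavity estimate $\psi(r)\ge (1-2^{2-n})(r-1)$ on $r\in[1,2]$ --- and that $\phi^+\ge K\ge u$ on $\partial B_2\times[0,1]$. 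Viscosity comparison against classical strict supersolutions then gives $u\le\phi^+$, so $u\equiv 0$ on $\{\phi\le 0\}=\{|x|\le R(t)\}\supset \{|x|\le 1-C'\lambda\}$ for a universal $C'$, which is \eqref{ulec2}.

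The main obstacle is the construction of the supersolution in \eqref{ulec2}. A naive radial traveling-wave profile $M(e^{c(|x|-R(t))}-1)^+$ does not work: the interior condition becomes $-R'(t)\ge c + (n-1)/|x|$, forcing the free boundary to move at speed at least $n-1=O(1)$ regardless of how small $\lambda$ is, because the curvature contribution $(n-1)/|x|$ in the radial Laplacian contaminates the interior equation. The resolution is to kill the interior equation entirely by choosing $\psi$ harmonic and taking the time correction purely additive, so that the only nontrivial constraint comes from the free-boundary condition and yields the desired $O(\lambda)$ inward speed, uniformly in dimension.
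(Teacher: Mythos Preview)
Your argument for \eqref{ulec} is essentially identical to the paper's: monotonicity of the support forces $u\equiv 0$ on $B_1\times[-K^{-1},0]$, then caloric comparison in the annulus plus boundary regularity gives the linear bound.

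For \eqref{ulec2} your approach is correct but differs from the paper's in the choice of barrier. The paper uses $w(x,t)=C_0\,g(|x|-r(t))$ with $r(t)=1-C_0\lambda t$ and $g$ solving the ODE $g''+2ng'=0$, $g(0)=0$, $g'(0)=1$; this makes $\Delta w=C_0\bigl(g''+\tfrac{n-1}{|x|}g'\bigr)<0$ because $\tfrac{n-1}{|x|}<2n$ when $|x|\ge 1/2$, so the interior inequality $w_t\ge 0>\Delta w$ is automatic, and on the free boundary $w_t=\lambda C_0^2=\lambda|\nabla w|^2$ exactly. You instead take $\phi=A\psi(|x|)+C\lambda t$ with $\psi$ harmonic, killing $\Delta\phi$ identically rather than making it negative. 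Both constructions share the same underlying idea---arrange $\Delta(\text{barrier})\le 0$ so that the only active constraint is the free-boundary condition, which then yields $O(\lambda)$ inward speed---and your closing paragraph correctly identifies why a naive traveling-wave profile fails (the curvature term forces $O(1)$ speed). One small point of care: in your write-up the constant $C$ in the free-boundary inequality must satisfy $C\gtrsim A^2$, so $A$ should be fixed first (from the boundary data) and $C$ chosen afterward; your sentences have the order reversed, but since both end up universal this is only a presentational issue, not a gap. The paper's exponential profile has the minor advantage of being globally smooth and bounded, whereas your harmonic $\psi$ is singular at the origin, but since the comparison only takes place in $\{|x|\ge R(t)\ge 1/2\}$ this causes no trouble.
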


\begin{proof} Since the support of $u$ is increasing with time we deduce that $u=0$ in $B_1$ for all $t \in [-K^{-1}, 0]$.
Then, in the annular domain $(B_2 \setminus B_1) \times [-K^{-1},0]$, by the comparison principle,  $u$ is less than a multiple of the solution to the heat equation which equals $0$ on $\p B_1 \times (-K^{-1}, 0]$, and $1$ on the remaining part of the parabolic boundary. This, together with the boundary regularity of such solution, implies the estimate \eqref{ulec}.

Now, for times $t \in [0,1]$ we compare $u$ with
$$w(x,t)=C_0 \, \, g(|x|-r(t)), \quad  r(t):=1 - C_0 \lambda t,$$
with $g$ a 1D function such that $g(s)=0$ if $s \le 0,$ and for positive $s$ is defined by the ODE
$$g''(s)+ 2n g'(s)=0, \quad g(0)=0, \quad g'(0)=1.$$
Notice that $g' \in [0,1]$. 

We may assume that $r(t) \ge 1/2$, otherwise the conclusion \eqref{ulec2} is trivial (say for $C >2C_0$). 

The constant $C_0$ is chosen large such that $w \ge u$ at time $t=0$ (by \eqref{ulec}) and also on $\p B_2 \times [0,1].$ We check that $w$ is a supersolution to \eqref{SPr2}; indeed in $\{w>0\}$ we have (recall $r(t) \geq 1/2$),
$$ w_t = C_0^2 \lambda g' \ge 0, \quad \quad \triangle w= C_0\left(g'' + \frac{n-1}{|x|} g'\right) <0,$$
and on $\p \{w>0\}$
$$ w_t=\lambda C_0^2 =\lambda |\nabla w|^2.$$
In conclusion, $u \le w$ which gives the desired conclusion \eqref{ulec2}. 
\end{proof}

Now, we turn to the proof of Lemma \ref{gfla}.

\begin{proof}[Proof of Lemma $\ref{gfla}$]
We assume that $u$ satisfies \eqref{SPr2} in $B_1 \times [-(K\lambda)^{-1},0]$, and $\p_x \{u>0\}$ is $\eps_0$-flat in $B_1$. 
Suppose that $(0,0) \in \p \{u>0\}$ and then, after a rotation, $$\mbox{$u(x,0)>0$ if $x_n > \eps_0$, and $u(x,0)=0$ if $x_n < - \eps_0$.}$$ 
From \eqref{ulec} in Lemma \ref{uso} (applied to balls tangent to $\{x_n=-\eps_0\}$) we find that $u \le C (x_n+ \eps_0)^+$ in $B_{1/2} \times [-(2K)^{-1},0]$. 

We define
$$u_\tau := \frac{1}{\tau} u(\tau x,\tau^2 t), \quad \quad \quad \mbox{with} \quad \tau \ge \eps_0^{1/2},$$
and, if $\tau \in [\eps_0^{1/2},c],$ then 
\begin{equation}\label{utau0}
u_\tau \le C(x_n+ \tau)^+ \quad \mbox{in $B_1 \times [-2,0]$.}
\end{equation} Notice that $u_\tau$ satisfies \eqref{SPr2} with $\tau \lambda$ instead of $\lambda$.
We apply \eqref{ulec2} of Lemma \ref{uso} for $u_\tau$ and obtain that (since $(0,0) \in \p\{u_\tau>0\}$),
\begin{equation}\label{flau}
\mbox{$\p_x \{u_\tau>0\} \cap B_{1/2}$ intersects $\{x_n \le C \lambda \tau\}$}, \quad \text{for all $t \in [-1,0].$}
\end{equation} 
Moreover,  $\p_x \{u_\tau>0\}$ is $\tau^{-1} \eps_0$-flat in $B_1$, which combined with \eqref{flau} implies that 
\begin{equation}\label{flau1}
\mbox{$\p \{u_\tau>0\} \cap (B_{1/2} \times [-1,0])$ is included in $\{x_n \le C (\lambda \tau + \tau^{-1}\eps_0)\}$.}
\end{equation}

In $(B_{1/2} \cap \{x_n > C \tau\}) \times [-1,0]$ we compare $u_\tau$ with the solution $w$ to the heat equation which equals $0$ 
on $\{x_n = C \tau\}$, and equals $u_\tau$ on the remaining part of the parabolic boundary. Notice that by \eqref{flau1}, since $\tau \geq \eps_0^{1/2}$, $u_\tau>0$ on $\{x_n=C\tau\}$.
From \eqref{utau0} we find $|u_\tau - w| \le C \tau$, and the boundary regularity of $w$ gives
\begin{equation}\label{ustau}
|u_\tau - ax_n| \le C \rho^{3/2} + C \tau \le 2C \rho^{3/2} \quad \quad \mbox {in} \quad B^+_{2\rho} \times[-\rho^2,0],
\end{equation}
for some constant $a<C$, provided that we choose $\tau = \rho^{3/2}$ with $\rho$ small, to be made precise later. 

We claim that the nondegeneracy assumption $u(x_0,t) \ge K^{-1}$ for some $x_0 \in B_{3/4}$ implies that $a>c$.
 For this we use \eqref{flau1} which,  in terms of the function $u$, implies that $\p_x \{u(\cdot,t)>0\}$,
 at all times $t=-\tau^2 \le -\eps_0$, intersects the $x_n$ axis at distance at most $C (\lambda |t|+\eps_0)$ from the origin.  As for \eqref{flau1},
using that $\p_x\{u>0\}$ is $\eps_0$-flat in $B_1,$ we obtain that $u(x,t)>0$ if $x_n > C \eps_0 + C \lambda |t|$ in $B_{1/2}$. Now we can use the nondegeneracy condition with a Hopf-type lemma for the heat equation and obtain $$ u \ge c (x_n-C(\eps_0+\lambda |t|))^+ \quad \mbox{in} \quad B_{1/4} \times [-(4K)^{-1}, 0],$$
for some $c>0$ that depends only on $n$ and $K$. We use this inequality at time $t=0$ in \eqref{ustau} and conclude $a>c$ since $\tau \rho >2 \tau^2 \ge 2\eps_0$. We can restate \eqref{ustau} as
$$ (ax_n - C \eta^{1+ \frac 1 5})^+ \le u \le  (ax_n + C \eta^{1+ \frac 15})^+ \quad \mbox{in} \quad B_{2 \eta} \times [-\eta^2,0],$$
with $\eta:=\tau \rho =\rho^{5/2}$.

Similarly, by looking at the points $(b(t)e_n,t)$ where the free boundary intersects the $x_n$ axis, we obtain that 
$$ |b(t)|\le C (\lambda |t|+\eps_0) \le C_0 \eta \quad \mbox{if} \quad t \in [-\lambda^{-1} \eta,0],$$
and in the domain $B_{2C_0\eta} \times [t-\eta^2,t]$ we have 
$$\left (a(t) \cdot (x - b(t)e_n) - C \eta^{\frac 65}\right)^+ \le u(x,s) \le \left (a(t) \cdot (x - b(t)e_n) + C \eta^{\frac 65}\right)^+ $$
with $c \le |a(t)| \le C$. 
The flatness assumption of the free boundary in $B_1$ implies  
$$|a(t)-a_n(t) e_n| \le C \eta,$$ so we may replace $a(t) \cdot (x-b(t) e_n)$ above by $a_n(t) (x_n-b(t))$. 

The bounds on $u$ above imply that $a_n(t)$ can vary at most $C \eta^{1/5}$ in an interval of length $\eta^2$. We can regularize $a_n(t)$ by 
averaging over such intervals (convolving with a mollifier) and the bounds for $u$ still hold after changing the value of the constant $C$. Hence for all $t \in [-\lambda^{-1}\eta,0]$, we can find $a_n(t) \in \R$ such that
\begin{equation}\label{e10}
a_n(t) \,  \left (x_n - b(t) - C \eta^{\frac 65} \right) ^+\le u \le a_n(t) \, \left(x_n -b(t) + C \eta^{\frac 65}\right)^+ 
\end{equation}
in $B_{2C_0 \eta} \times [t-\eta^2,t]$
with 
\begin{equation}\label{ansize}c \le a_n(t) \le C, \quad \quad |a_n'(t)| \le C \eta^{\frac 15-2}, \quad \quad |b(t)| \le C_0 \eta.\end{equation}
It remains to show that we can modify $b$ slightly so that it satisfies the ODE $b'=-\lambda a_n$. Precisely, we let 
$$ \tilde b'(t)= - \lambda a_n(t), \quad \tilde b(0)=0,$$
and we show that 
\begin{equation}\label{e11}
|b(t)-\tilde b(t)| \le C \eta^{1+\beta} \quad \mbox{ if} \quad t \in [-\lambda^{-1}\eta,0], \quad \quad \beta=1/10.
\end{equation}

For this we perturb the family of evolving planes $a_n(t)(x_n-\tilde b(t))^+$ into a subsolution/supersolution. Let
$$d(t):=\tilde b(t)+C_1 \eta^{\beta} \lambda t,$$
with $C_1$ large, to be specified later.
We claim that
\begin{equation}\label{e12}
b(t) \ge d(t) -  2\eta^{1+\beta}.
\end{equation}
For this we define the function
$$v:=(1-C_2\eta^{\beta}) \, a_n(t) \, (h(x-d(t)e_n))^+,$$
with
$$h(x):=x_n - \eta^{\beta-1} (|x'|^2-2n x_n^2),$$
and check that it is a subsolution to our problem \eqref{SPr2} in the domain $$\Omega:=\bigcup_{t \in [- \lambda^{-1}\eta,0]}B_{2\eta}(d(t)e_n) \times \{t\}.$$
 Notice that in a ball of radius $2\eta,$
 \begin{equation}\label{sizeh}h \le C \eta, \quad \quad |\nabla h|=1 + O(\eta^{\beta}),\end{equation}
 and the constant $C_2=C_2(n)$ is chosen depending only on $n$ such that
 \begin{equation}\label{e13}
v \le a_n(t) (x_n - d (t))^+,
 \end{equation}
 with equality at $d(t)e_n$ and moreover, when $x \in \p  B_{2\eta}(d(t)e_n) \cap \{v(x,t)>0\}$, the difference between the two functions above is greater than $\eta^{1+\beta}$.

Next, we check that $v$ is a subsolution. In the interior $\{v>0\}$, using $\eqref{ansize},\eqref{sizeh},$ the definition of $\tilde b$, we have (for $\eta$ small)
$$|v_t| \le C |a_n'|\eta + C|d'| \le C \eta^{-4/5}, \quad \quad \triangle v \ge  c\eta^{\beta-1} > v_t,$$
and on the free boundary ($C'$ depending only on $C_2,n$),
$$v_t = (1-C_2 \eta^{\beta})a_n (-d ') h_n, \quad \quad |\nabla v|^2 \ge (1-C' \eta^{\beta}) a_n^2 .$$
Since 
$$h_n=1+ O(\eta^{\beta}), \quad \quad \quad (-d')a_n = \lambda a_n^2 - C_1 \lambda a_n \eta^{\beta},$$
we can choose $C_1$ large such that $v_t < \lambda |\nabla v|^2$.

If $$\mbox{$b (t_0)< d(t_0) - 2\eta^{1+\beta}$ for some $t_0 \in [-\lambda^{-1}\eta,0]$, }$$
then by \eqref{e10} and \eqref{e13} we find that $v< u$ at time $t=t_0$ in $B_{2\eta}(d(t_0)e_n) \cap \overline {\{v >0\}}$. 
On the other hand $v=u$ at the origin $(0,0)$. 
This means that as we increase $t$ from $t_0$ to $0$, 
the graph of $v (\cdot,t)$ in $B_{2\eta}(d(t)e_n) \cap \overline {\{v >0\}}$ will touch by below the graph of $u$ for a first time $t$, 
and the contact must be an interior point to $B_{2 \eta}(d(t)e_n)$ due to the properties \eqref{e10},\eqref{e13} of $u$ and $v$ (in particular the difference between $ a_n(t) \, (x_n-d(t))^+$ and $v$ is greater than $\eta^{1+\beta}$ on $\p B_{2 \eta}(d(t)e_n)$). 
This contact point is either on the free boundary $\p \{v>0\}$ or on the positivity set $\{v>0\}$ and we reach a contradiction since $v$ is a strict subsolution. The claim \eqref{e12} is proved, hence
$$b(t) \ge \tilde b(t) - C \eta^{1+\beta} \quad \mbox{if} \quad  \quad t \in [-\lambda^{-1}\eta,0].$$
The opposite inequality is obtained similarly and the claim \eqref{e11} holds. Then from \eqref{e10} we deduce that for all $\eta \le c$ small
$$a_n(t) \left(x_n - \tilde b(t) -\eta^{1+\beta'}\right) ^+ \, \, \le \, \, u \, \, \le \, \, a_n(t) \left(x_n -\tilde b(t) + \eta^{1+\beta'} \right)^+ $$
in $B_{\eta} \times [-\lambda^{-1}\eta,0]$
with $\beta'=1/20$ and
$$c \le a_n(t) \le C, \quad |a_n'(t)| \le \eta^{\beta'-2}, \quad \quad \tilde b'(t)=- \lambda a_n(t), \quad \tilde b(0)=0.$$ 
\end{proof}

\section{The Nonlinear problem}

In this section, we use a standard Hodograph transform to reduce our Stefan problem \eqref{SP} to an equivalent nonlinear problem with fixed boundary and oblique derivative boundary condition (see \eqref{NLP}). 

Here and henceforth, for $n \geq 2,$ given $r>0$ we set $$Q_r := (-r,r)^{n}, \quad \quad Q_r^+:=Q_r \cap \{x_n\geq 0\}, \quad Q_r(x_0):=x_0+Q_r,$$
$$\mathcal{C}_{r}:= (Q_r \cap \{x_n>0\}) \times (-r, 0], \quad \mathcal{F}_{r}:=\left\{(x,t)| \hspace{0.2cm} x \in Q_r \cap \{x_n=0\},\hspace{0.2cm}t \in (-r,0]\right\}.$$ Also, by parabolic cylinders we mean
$$\mathcal P_r(x_0,t_0):= Q_r(x_0) \times (t_0-r^2,t_0].$$

\subsection{The Hodograph transform} \label{Hod} As mentioned above, we use a Hodograph transform to reduce the Stefan problem \eqref{SP} to one with fixed boundary. Precisely, we view the graph of $u$ in $\R^{n+2}$ 
$$ \Gamma:=\{(x,x_{n+1},t)| \quad x_{n+1}=u(x_1,x_2,\ldots,x_n,t)\}$$ 
as the graph of a possibly multi-valued function $\bar u$ with respect to the $x_n$ direction
$$\Gamma:= \{(x,x_{n+1},t)| \quad x_{n}=\bar u(x_1,x_2,\ldots,x_{n-1},x_{n+1},t)\}.$$ We use $(y_1, \ldots, y_n)$  to denote the coordinates $(x_1,x_2,\ldots,x_{n-1},x_{n+1})$.
Then, if $D u$ and $D \bar u$ denote at some point on the graph $\Gamma$ the gradients with respect to the first $n$ entries of $u$ and $\bar u$, we find
$$D u= - \frac{1}{\bar u_n}  (\bar u_1,\ldots,\bar u_{n-1}, - 1), \quad \quad u_t=-\frac{\bar u_t}{\bar u_n}$$
$$ D^2 u= - \frac{1}{\bar u_n} \left(A(D \bar u)\right)^T \, \, D^2 \bar u \, \, A(D \bar u),$$
where $A(D \bar u)$ is a square matrix which agrees with the identity matrix except on the $n$th row where the entries are given by the right hand side of $D u$ above.  

The Stefan problem \eqref{SP} in terms of $\bar u$ can be written abstractly as the following quasilinear parabolic equation with oblique derivative boundary condition: 
\begin{equation}\label{SPbar}\begin{cases}
\bar u_t = tr (\bar A(\nabla \bar u) \, D^2 \bar u)  &\text{in $\{y_n>0\}$},\\
\bar u_t=g( \nabla \bar u) &\text{on $\{y_n=0\}$},
\end{cases}\end{equation}
with $\bar A(p)$ symmetric, positive definite as long as $p_n \ne 0$, and $g_n(p) >0$. 

The free boundary of $u$ is given by the graph of the trace of $\bar u$ on $\{y_n=0\}$. Our goal becomes to show that $\bar u$ is $C^{1,\alpha}$ with respect to the $y',t$ variables.
Let us assume that $u$ satisfies the hypotheses of Theorem \ref{Main2} (it is now more convenient to work in cubes rather than in balls). 
Below we denote by $c$, $C$ various constants depending on $K$ and $n$. From the flatness assumption
\begin{equation}\label{u-an}
\left|u-a_n(t)(x_n-b(t))^+\right| \le C \eps_1 \lambda \quad \quad \mbox{in} \quad Q_\lambda \times [-\lambda,0],
\end{equation} 
and $0 \in \p \{u>0\}$ implies $|b(0)|\le C\eps_1 \lambda$ which together with $|b'| \le C \lambda$ gives
$$ |b(t)| \le C(\eps_1+ |t|) \lambda.$$
Thus, if $(x,t) \in Q_\lambda \times [-c\lambda, 0]$, then (for $\eps_1$ possibly smaller), $|b(t)| \leq \lambda/2$ and by \eqref{u-an} the domain of definition of $\bar u$ at time $t$ contains $Q^+_{\bar c\lambda}$ for $\bar c$ small enough. We conclude that $\bar u$ is well-defined in $Q^+_{\bar \lambda} \times [-\bar \lambda,0]$, with $\bar \lambda:=c_1 \lambda$, $c_1$ sufficiently small.

Moreover, the graph of $\bar u$ in this set is closed in $
\R^{n+2}$ (since it is obtained as a rigid motion from the graph of $u$) and it satisfies equation \eqref{SPbar} in the viscosity sense, see Definition 
\ref{DefF}  below.

\begin{rem}\label{rem} We observe that $\bar u$ is single-valued in the region $y_n\ge C \eps_1 \lambda,$ and possibly multi-valued near $y_n=0$. Indeed, similarly as above, 
if $t \in [t_0-\lambda^2,t_0+\lambda^2],$ then using the bound for $|b'|$ and \eqref{lala0} for $|a'|$,
$$|a(t)-a(t_0)| \le c_0, \quad \quad |b(t)-b(t_0)| \le C \lambda^2,$$
hence, if $\lambda_0$, $c_0$ are smaller than $\eps_1$ then
\begin{equation}\label{u-an2}
\left|u-a_n(t_0)(x_n-b(t_0))^+\right| \le C \eps_1 \lambda \quad \quad \mbox{in} \quad Q_\lambda \times [t_0-\lambda^2,t_0+\lambda^2],
\end{equation} 
with $ |b(t_0)| \le \lambda/2$.
By applying interior gradient estimates in parabolic cylinders included in $\{u>0\}$ we find from \eqref{u-an2} that if
$$(x_0,t_0) \quad \mbox{with} \quad x_0 \in Q_\lambda, \quad t_0 >-c \lambda \quad \mbox{is in the region} \quad C \eps_1 \lambda \le u(x_0,t_0) \le c \lambda$$
then
$$|\nabla u(x_0,t_0)-a_n(t_0)e_n| \le (2K)^{-1}.$$
\end{rem} Finally, the main hypotheses of Theorem \ref{Main2} can be written in terms of $\bar u$ as
$$|\bar u-(\bar a_n(t)y_n + \bar b(t))| \le C \eps_1 \bar \lambda \quad \mbox{in} \quad Q_{\bar \lambda}^+ \times [-\bar \lambda,0], $$
$$\bar b'(t)=g(\bar a_n(t) e_n), \quad \quad K^{-1} \le \bar a_n \le K,$$
$$\bar \lambda \le \bar \lambda_1, \quad \quad |\bar a'_n| \le \bar c_1 \bar \lambda^{-2}.$$

Our purpose in this paper is to prove an improvement of flatness result for solutions of the nonlinear equation \eqref{SPbar} as above, provided that $\eps_1$, $\bar \lambda_1$, $\bar 
c_1$ are chosen small depending on $n$ and $K$ (see Proposition \ref{P!} in the next section). Then Theorem \ref{Main2} can be obtained by iterating such statement.

\subsection{Assumptions on the nonlinear problem.} We consider solutions to the following problem (for simplicity of notation we drop the bars in our formulation, and we use $x$ rather than $y$),
\begin{equation}\label{NLP}\begin{cases}
u_t = F(\nabla u, D^2u)  &\text{in $\mathcal C_{\lambda},$}\\
u_t=g( \nabla u)& \text{on $\mathcal F_\lambda$.}
\end{cases}\end{equation}
We assume that $F$ is linear in $D^2 u$, that is $F(\nabla u, D^2 u)= tr(A(\nabla u)D^2u)$ and $g_n >0.$

We start by stating precisely the notion of viscosity solution, which can be easily adapted to multi-valued functions $u$ whose graphs are compact sets of $\R^{n+2}$. 

\begin{defn}\label{DefF}We say that a continuous function $u:\overline{\mathcal C}_\lambda \to \R$ is a {\it viscosity subsolution} to \eqref{NLP} if its graph cannot be touched by above at points in $\mathcal C_\lambda \cup \mathcal F_\lambda$ (locally, in parabolic cylinders) by graphs of strict $C^2$ supersolutions $\varphi$ of \eqref{NLP}, i.e.
\begin{equation}\begin{cases}
\varphi_t > F(\nabla \varphi, D^2\varphi)  & \text{in $\mathcal C_{\lambda},$}\\
\varphi_t>g( \nabla \varphi) & \text{on $\mathcal F_\lambda$.}
\end{cases}\end{equation}
\end{defn}
Similarly we can define {\it viscosity supersolutions} and {\it viscosity solutions} to \eqref{NLP}.


We define now a class of linear in $x$ functions that we use throughout this paper to express the flatness condition.

\begin{defn}\label{lab}
We denote by $l_{a,b}(x,t)$ functions which for each fixed $t$ are linear in the $x$ variable, and whose coefficients in the $x'$ variable are independent of $t$, and also so that $l_{a,b}$ satisfies the boundary condition in \eqref{NLP} on $\{x_n=0\}$. More precisely,
$$l_{a,b}(x,t):= a(t) \cdot x + b(t), $$
with
$$a(t):= (a_1,\ldots, a_{n-1}, a_n(t)), \quad a_i \in \R,\hspace{0.2cm}i=1,\ldots,n-1,$$
and
$$ b'(t)= g(a(t)).$$
\end{defn}

Our main result is to show that if $u$ is a viscosity solution of \eqref{NLP} which is possibly multi-valued near $\{x_n=0\}$ and is well 
approximated by $l_{a,b}$ in a cylinder $\mathcal C _\lambda$, i.e.
$$|u-l_{a,b}| \le \eps \lambda \quad \mbox{in} \quad \mathcal C_\lambda,$$
then in a smaller cylinder $\mathcal C_{\tau \lambda}$ it can be approximated by 
another function $l_{\tilde a,\tilde b}$ with an error $\eps_\tau=\eps \tau^\alpha$ that improved by a $C^{1,\alpha}$ scaling. 

Before formulating this result rigorously in the next section, we state here the precise hypotheses on $F$ and $g$. We assume that $F(p,M)$ is uniformly elliptic in $M$ for each fixed slope $p\in \R^n$ with $p_n>0$ and 
the ellipticity constants could degenerate as $p_n \to 0^+$ or $|p| \to \infty$. Precisely, for any given constant $K$ large there exists $\Lambda$ large depending on $K$ such that
\begin{equation}\label{AF1}
 \Lambda I \ge D_MF(p,M) \ge \Lambda^{-1} I, \quad \quad \mbox{if} \quad p \in \mathcal R_K,
 \end{equation}
 with
 \begin{equation}\label{rsubk}
 \mathcal R_K:= B_K \cap \{p_n \ge K^{-1}\} \quad \subset \quad \R^{n}.
 \end{equation}
We choose $K$ sufficiently large such that when $p$ is restricted to the set above we also have
\begin{equation}\label{AF2}
|D_p F| \le \Lambda |M|, \quad  \|g\|_{C^1} \le \Lambda, \quad g_n \ge \Lambda^{-1}. 
\end{equation}

From now on we assume that the constants $K$ and $\Lambda$ have been fixed such that \eqref{AF1}-\eqref{AF2} hold. In fact, for notational simplicity, by possibly choosing $K$ larger, we can assume that \eqref{AF1}-\eqref{AF2} hold with $\Lambda=K.$
We consider the situation when $u$ is well approximated in $\mathcal C_\lambda$ by a function $l_{a,b}$ as above with slopes $a(t)$ 
belonging to the region $\mathcal R_K$. 

We suppose in addition that $u$ satisfies the Harnack inequality from scale $\lambda$ to scale $\sigma \lambda$ where $\sigma$ is a 
small parameter. We denote this property for $u$ as property $H(\sigma)$ which is defined in the following way.

\begin{defn}\label{H(d)}
Given a positive constant $\sigma$ small, we say that 
$$\mbox{$u$ has property $H(\sigma)$ in $\mathcal C_\lambda$}$$ 
if $u$ (possibly multi-valued) satisfies the following version of interior Harnack inequality in parabolic cylinders of size $r \in [\sigma \lambda,\lambda]$.

\medskip

Let $l$ denote a linear function
$$l(x):= a \cdot x + b, \quad \quad \mbox{with} \quad a \in \R^n,\hspace{0.2cm}b\in \R, \quad |a|\le K.$$
 If
$$u \ge l \quad  \mbox{ in} \quad  Q_r(x_0) \times [t_0-r^2,t_0+r^2] \quad \quad \subset \quad \mathcal C _\lambda,$$
with $ r \ge \sigma \lambda,$ and $$(u-l)(x_0,t_0) \ge \mu , \quad \mbox{for some} \quad \mu \ge 0,$$ 
then $$u-l \ge \kappa \mu  \quad \mbox{ in} \quad  Q_{r/2}(x_0) \times \left[t_0 +\frac 12 r^2,t_0+r^2\right],$$
for some constant $\kappa$ depending on $n$ and $K$ (but independent of $\sigma$).

Similarly, if $u \le l$ we require these inequalities to hold for $l-u$ instead of $u-l$.

\end{defn}

Property $H(\sigma)$ for all $\sigma>0$ is a consequence of the parabolic Harnack inequality in the case when $u$ is a viscosity solution of \eqref{NLP}, and in addition we know that $\nabla u \in \mathcal R_K$. However, we will show below that property $H(\sigma)$ for some $\sigma$ small, is satisfied for solutions $u$ which are well approximated by functions $l_{a,b}$ and are graphical with respect to the $e_n$ direction.

\section{The iterative statement}

In this section, we state our main improvement of flatness result Proposition \ref{P!}, and we show how Theorem \ref{Main2} can be deduced from it. We also describe the strategy of the proof of Proposition \ref{P!}, and its connection to the corresponding linearized problem \eqref{LP0}.

The improvement of flatness statement reads as follows (we use the notation from Subsection 3.2). The rest of the paper will be devoted to its proof.

\begin{prop}[Improvement of flatness] \label{P!}Fix $K>0$ large, and assume $F$,$g$ satisfy \eqref{AF1}-\eqref{AF2}.  Assume that $u$ is a viscosity solution to \eqref{NLP} possibly multi-valued, which satisfies property $H(\eps^{1/2})$ and
\begin{equation}\label{u-lab}
|u - l_{a,b}| \leq \eps \lambda \quad \text{in $\overline {\mathcal C}_{\lambda}$, with} \quad b'(t)= g(a(t)),
\end{equation}
$$a(t) \in \mathcal R_K, \quad |a'_n(t)| \leq \delta \eps \lambda^{-2},$$ 
and
$$\eps\leq \eps_0, \quad \lambda \leq \lambda_0, \quad  \lambda \le \delta \eps.$$
Then there exists $l_{\tilde a,\tilde b}$ such that
$$|u - l_{\tilde a, \tilde b}| \leq \frac \eps 2 \tau \lambda \quad \text{in $\overline {\mathcal C}_{\tau\lambda}$}, \quad \quad \tilde b'(t)= g(\tilde a(t)),$$
with
$$ |a(t)-\tilde a(t)| \leq C \eps,\quad \quad |\tilde a'_n(t)| \leq \frac{\delta \eps}{2}  (\tau\lambda)^{-2}.$$
Here the constants $\eps_0, \lambda_0, \delta, \tau >0$ small and $C$ large depend only on $n$, and $K$.
\end{prop}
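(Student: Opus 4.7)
\medskip

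\textbf{Plan of proof.} The plan is to argue by compactness and contradiction, in the standard linearization framework à la \cite{D}, adapted to the mixed parabolic/hyperbolic scaling of \eqref{NLP}. Suppose the statement fails: then there exist sequences of data $F_k, g_k$ satisfying \eqref{AF1}--\eqref{AF2}, parameters $\eps_k\to 0$, $\lambda_k/\eps_k\to 0$, planes $l_{a_k,b_k}$, and solutions $u_k$ of the corresponding problem in $\mathcal C_{\lambda_k}$ with $|u_k-l_{a_k,b_k}|\le\eps_k\lambda_k$, satisfying property $H(\eps_k^{1/2})$, for which no admissible $l_{\tilde a,\tilde b}$ realizes the $\frac{\eps_k}{2}\tau\lambda_k$-approximation on $\mathcal C_{\tau\lambda_k}$, for any fixed $\tau$ to be chosen small at the end. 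The normalized excesses
\[
v_k(x,t)\;:=\;\frac{u_k(x,t)-l_{a_k,b_k}(x,t)}{\eps_k\lambda_k}
\]
are defined on (a subset of) $\mathcal C_{\lambda_k}$ with $|v_k|\le 1$. After the natural parabolic rescaling $(x,t)\mapsto(\lambda_k x,\lambda_k t)$, which sends $\mathcal C_{\lambda_k}$ to $\mathcal C_1$ and introduces the diffusion coefficient $\lambda_k$ in the interior equation for $v_k$, we view the $v_k$ as defined on $\mathcal C_1$ (after a harmless translation to align the graph with $\{x_n\ge 0\}$, using Remark~\ref{rem} so that the graphical reduction is valid in the region where $v_k$ is single-valued).

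The first real step is to obtain uniform Hölder estimates for $\{v_k\}$ with respect to an appropriate distance $d_{\lambda_k}$ that reflects the mixed scaling. For points in the interior $\{x_n\gtrsim \eps_k^{1/2}\}$ the sequence $v_k$ satisfies a uniformly parabolic linear equation with measurable coefficients (linearizing $F$ around the frozen slope $a_k(t)$, which lies in the uniformly elliptic region $\mathcal R_K$), so classical Krylov--Safonov theory applies. To control the layer $\{0\le x_n\lesssim \eps_k^{1/2}\}$ and the trace on $\mathcal F_1$ one invokes the hypothesis $H(\eps_k^{1/2})$: the Harnack-type inequality down to scale $\eps_k^{1/2}\lambda_k$ is precisely what yields an oscillation decay, hence Hölder continuity, for $v_k$ in $d_{\lambda_k}$, up to the boundary $\{x_n=0\}$. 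This is the content of the Hölder estimates developed in Sections~6--8 of the paper for the linearized problem. With these in hand, Arzelà--Ascoli gives a subsequence $v_k\to v$ locally uniformly on $\mathcal C_1$.

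Next, I pass to the limit in the viscosity sense to identify the equation satisfied by $v$. Writing $u_k=l_{a_k,b_k}+\eps_k\lambda_k v_k$ and plugging into \eqref{NLP}, using $b_k'=g_k(a_k)$ to kill the zeroth-order error on $\mathcal F_1$, Taylor expansion of $F_k$ and $g_k$ around $(a_k(t),0)$ gives the linearized pair
\begin{equation}\label{LPL}
\begin{cases}
\beta\, v_t \;=\; \operatorname{tr}\!\bigl(A(t)\,D^2 v\bigr) &\text{in }\{x_n>0\},\\[2pt]
v_t \;=\; \gamma(t)\cdot \nabla v &\text{on }\{x_n=0\},
\end{cases}
\end{equation}
where $\beta=\lim\lambda_k/\eps_k\in[0,1]$ (possibly $0$, since $\lambda_k\le\delta\eps_k$), $A(t)=D_M F\bigl(a(t),0\bigr)$ is uniformly elliptic with constants controlled by $K$, and $\gamma(t)=\nabla_p g\bigl(a(t)\bigr)$ has $\gamma_n\ge\Lambda^{-1}$. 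The bound $|a_k'|\le \delta\eps_k\lambda_k^{-2}$ ensures the coefficients $A(t),\gamma(t)$ are well-defined limits, and the condition $\lambda_k\le\delta\eps_k$ guarantees the interior equation degenerates to at most an equation of the form $\beta v_t=\operatorname{tr}(A D^2 v)$ with $\beta$ bounded. The stability of viscosity solutions under the appropriate touching by smooth test functions (with $\nabla_x l_{a_k,b_k}\in\mathcal R_K$ bounded away from $\{p_n=0\}$) is what is needed here.

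Finally, I apply the linear regularity theorem for \eqref{LPL} (Proposition-type Hölder/$C^{1,\alpha}$ estimate with respect to $d_\lambda$ proven in Sections~5--9) to approximate $v$ at scale $\tau$ by an admissible infinitesimal deformation of the planes $l_{a,b}$: there exist $\tilde\alpha\in\R^n$, $\tilde\beta\in\R$ with $\tilde\beta'(t)=\nabla g(a(t))\cdot\tilde\alpha(t)$ (the linearization of the compatibility $b'=g(a)$) and $|\tilde\alpha|,|\tilde\beta|\le C$, $|\tilde\alpha_n'|\le\delta/4\cdot\tau^{-2}$, such that
\[
\bigl|v(x,t)-(\tilde\alpha(t)\cdot x+\tilde\beta(t))\bigr|\;\le\;\tfrac14\tau^{1+\alpha}\quad\text{in }\mathcal C_\tau.
\]
Choosing $\tau$ small universal (depending on the Hölder exponent produced by the linear estimate) and then $\eps_0,\lambda_0,\delta$ small accordingly, we undo the rescaling: the new plane $l_{\tilde a,\tilde b}$ with $\tilde a=a+\eps_k\tilde\alpha$, $\tilde b=b+\eps_k\lambda_k\tilde\beta$ satisfies $\tilde b'=g(\tilde a)+O(\eps_k^2)$, and after absorbing the quadratic error via the slack in $\eps/2$, gives the desired approximation on $\mathcal C_{\tau\lambda_k}$, contradicting our standing assumption.

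\textbf{Main obstacle.} The delicate point is the uniform Hölder estimate for $v_k$ up to $\mathcal F_1$ with respect to the anisotropic distance $d_{\lambda_k}$, since the two equations in \eqref{LPL} scale differently (parabolic inside, first order/hyperbolic on the boundary) and in the degenerate regime $\lambda_k/\eps_k\to 0$ the boundary transport essentially uncouples from the bulk diffusion. Controlling this uniformly in $\lambda_k\in(0,\delta\eps_k]$ — which is exactly what property $H(\eps_k^{1/2})$ is designed to encode — is the heart of the argument and the reason the linearized Schauder theory has to be built in the $d_\lambda$-geometry rather than the standard parabolic one.
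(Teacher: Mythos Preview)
Your proposal takes a genuinely different route from the paper. The paper does \emph{not} argue by compactness; it gives a direct, quantitative argument at fixed $\lambda$. Precisely (Section~5.3): first, the Harnack inequality for the rescaled error $w$ (Proposition~\ref{Hanw}) yields a function $\phi$ with $|w-\phi|\le\delta$ and $\|\phi\|_{C^\alpha_{d_\lambda}}\le C$; then one solves the Dirichlet problem for the linear equation \eqref{LP0} in $\mathcal C_{1/2}$ with data $\phi$ (Proposition~\ref{DiPr}), obtaining a classical solution $v$ with $|\nabla v|,|D^2v|\le C(\delta)$ away from $\p_D\mathcal C_{1/2}$; next, the comparison principle (Corollary~\ref{Com}) gives $|w-v|\le C\delta$ throughout $\mathcal C_{1/2}$; finally, the linear interior estimate (Proposition~\ref{estimate}) is applied to $v$ at the same $\lambda$, and the constants $\tau$ and $\delta=\tau^{1+\alpha/2}$ are chosen explicitly. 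No limit in $\eps$ or $\lambda$ is ever taken.

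Your compactness argument can in principle be made to work, but it needs two corrections. First, the coefficient in the limit interior equation is not $\beta=\lim\lambda_k/\eps_k$: after multiplying the first line of \eqref{nlp} by $\lambda_k/\eps_k$ (the only scaling that kills the $O(1)$ term $b'$), one obtains $\lambda_k (w_k)_t=tr(A_{\lambda_k}D^2 w_k)+O(\delta)$, so the relevant coefficient is $\lambda_k$ itself, exactly as in \eqref{LP0}. Second, and more substantially: in the contradiction setup $\lambda_k\le\delta\eps_k\to 0$, so any subsequential limit has $\lambda_\infty=0$, and the limit problem is the degenerate elliptic--transport system $tr(A\,D^2v)=0$ in $\{x_n>0\}$ coupled with $v_t=\gamma\cdot\nabla v$ on $\{x_n=0\}$. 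Proposition~\ref{estimate} is stated only for $\lambda\in(0,1]$; you would have to extend it to $\lambda=0$ (plausible from the uniformity of those estimates, but an additional step you do not address). The paper's direct comparison route avoids this entirely by never sending $\lambda\to 0$, which is precisely the structural advantage of that approach here. Your identification of the ``main obstacle''---uniform-in-$\lambda$ H\"older control in the $d_\lambda$ geometry---is correct and is exactly what Proposition~\ref{Hanw} and Theorem~\ref{H} supply.
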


For the remainder of the section constants depending only on $n$ and $K$ are called universal, and denoted by $c_i$, $C_i$.

\begin{rem}
We apply the proposition above to the hodograph transform of a solution to the original Stefan problem, hence in our case $u$ is graphical with respect to the $e_n$ direction. Then \eqref{u-lab} already implies our hypothesis that 
$$ \mbox{$u$ satisfies property $H(\eps^{1/2})$ in $\mathcal C_\lambda$.}$$
Indeed, if $t \in [t_0-\lambda^2,t_0+\lambda^2],$ then using the bounds for $|a'|$, $|b'|$,
$$|a(t)-a(t_0)| \le \delta \eps, \quad \quad |b(t)-b(t_0)| \le C \lambda^2 \le C \delta \eps \lambda,$$
hence
\begin{equation}\label{lat}
|a(t_0)\cdot x + b(t_0) -l_{a,b}| \le C \delta \eps \lambda \quad \mbox{in} \quad Q_\lambda^+ \times [t_0-\lambda^2,t_0+\lambda^2].
\end{equation}
This shows that $u$ is well approximated in each parabolic cylinder of size $\lambda$ by a linear function which is constant in $t$,
\begin{equation}\label{lat2}
|u - (a(t_0)\cdot x + b(t_0))| \leq 2 \eps \lambda \quad \mbox{in} \quad Q_\lambda^+ \times [t_0-\lambda^2,t_0+\lambda^2],
\end{equation}
with $C \ge a_n(t_0) >c$.
Since the graph of $u$ coincides with the graph (in the $e_n$ direction) of a solution to the heat equation, 
we can use the standard Harnack inequality for the heat equation and find that $u$ satisfies property $H(C \eps)$ in $\mathcal C_\lambda$ (as we used interior regularity in Remark \ref{rem}). 
Thus $u$ satisfies property $H(\eps^{1/2})$ by choosing $\eps_0$ smaller if necessary.

This argument shows that if $u$ is graphical with respect to the $e_n$ direction, then it is single-valued away from a $O(\eps \lambda)$ neighborhood of $\{x_n=0\}$.
\end{rem}

We now show that Proposition \ref{P!}  implies Theorem \ref{Main2}, and the remainder of the paper will be devoted to prove Proposition \ref{P!}.

\

{\it Proof of Theorem $\ref{Main2}.$} As discussed in Subsection \ref{Hod}, Theorem \ref{Main2} is equivalent to obtaining $C^{1,\alpha}$ estimates 
on $\{x_n=0\}$ for the hodograph transform. 
After relabeling constants if necessary, the hodograph transform does satisfy the hypotheses of Proposition \ref{P!} 
with 
$\eps=\eps_0$, $\lambda \le \min\{\delta \eps_0,\lambda_0 \}$, $a_0(t)=(0,0,\ldots,0, (a_0)_n(t)) \in \mathcal R_{K/2}$. 
Now Proposition \ref{P!} can be applied indefinitely 
in the cylinders $\mathcal C_{\lambda_k}$, $\lambda_k:=\lambda \tau^k$, with $\eps=\eps_k:=\eps_0 2^{-k}= C(\lambda)\lambda_k^\alpha$. 
The hypothesis that $a_k(t) \in \mathcal R_K$ is satisfied (by choosing $\eps_0$ smaller if necessary) since $$|a_k(t)-a_{k-1}(t)| \le C \eps_k, \quad \quad a_0(t) \in \mathcal R_{K/2},$$ from which we also deduce that 
\begin{equation}\label{akun}|a_k(t) - \nabla u(0,t)| \le C \eps_k. \end{equation}
Hence 
$$|u - l_{a_k,b_k}| \leq \eps_k \lambda_k \le C(\lambda) \, \lambda_k^{1+\alpha} \quad \quad \text{in} \quad  \mathcal C_{\lambda_k},$$
for all $k \ge 0$, and  from \eqref{lat2} (applied for $\lambda_k$) and \eqref{akun} we deduce that 
$$ |\nabla u(0,t) - \nabla u(0,s)| \le C(\lambda)|t-s|^{\alpha /2},$$
which gives
$$|a_k(t)-a_k(s)| \le C(\lambda) \lambda_k^{\alpha/2} \quad \mbox{if} \quad t,s \in [-\lambda_k,0].$$
Using that $b_k'=g(a_k)$ we finally obtain
$$|u-(a_k(0) \cdot x  + b_k'(0)t+ b_k(0))| \le C(\lambda) \lambda_k^{1+ \frac \alpha 2} \quad \quad \text{in} \quad  \mathcal C_{\lambda_k},$$ 
which is the desired conclusion.
\qed

\subsection{Strategy of the proof of the improvement of flatness.} We briefly explain the strategy of the proof of Proposition \ref{P!}. The main idea is to 
linearize the equation near $l_{a,b}$. Define $w (x, t)$ the rescaled error by
\begin{equation}\label{wdef}
 u(x,t):= l_{a,b}(x,t) + \eps \lambda w\left(\frac x \lambda, \frac{t}{\lambda}\right), \quad (x,t)\in \mathcal C_\lambda.
 \end{equation}
Then $w$ is defined in $\mathcal C_1$, possibly multi-valued near $\{x_n=0\}$, and satisfies by hypothesis 
$$|w| \le 1 \quad \mbox{in} \quad \mathcal C_1,$$
and
\begin{equation}\label{nlp}\begin{cases}
\lambda a'_n(\lambda t) x_n + b' (\lambda t) + \eps w_t(x,t) = F\left(a(\lambda t) +\eps \nabla w, \frac \eps \lambda D^2 w\right)&\text{in $\mathcal C_{1},$}\\
\  \\
b'(\lambda t) + \eps w_t = g(a(\lambda t)+ \eps \nabla w)&\text{on $\mathcal F_{1}$.}
\end{cases}\end{equation}

We show that $w$ is well approximated by a solution to the linear equation obtained formally by multiplying the first equation 
by $\lambda \eps^{-1}$ and the second by $\eps^{-1}$ and then letting $\eps \to 0$, $\delta \to 0$. Using $|a'| \le \delta \eps \lambda^{-2}$, and $\lambda \eps^{-1}\le \delta \to 0$ we obtain
\begin{equation}\label{LP0}\begin{cases}
\lambda v_t = tr(A_\lambda(t)D^2v)& \text{in $\mathcal C_{1},$}\\
v_t = \gamma_\lambda(t) \cdot \nabla v&\text{on $\mathcal F_{1}$,}\\
\end{cases}
\end{equation}
with $$A_\lambda(t):=A(a(\lambda t)), \quad \quad \gamma_\lambda(t):=\nabla g(a(\lambda t)).$$
Using that $A,g \in C^2 (\mathcal R_K)$, and that $|a'| \ll \lambda^{-2}$ we find
$$| A'_\lambda(t)| \le \lambda^{-1}, \quad \quad |\gamma'_\lambda(t)| \le \lambda^{-1}. $$
The next sections are devoted to the study of the linear problem \eqref{LP0}, and to obtain estimates which are uniform with respect to $
\lambda$. To this aim, we introduce a distance $d$ between points $(x,t) \in \R^{n+1}$ 
\begin{align*}
d((x,t), (y,s)) : & = \\
=\min\{&|x'-y'|+|x_n-y_n| + |t-s|^{1/2}, \quad |x'-y'|+|x_n|+|y_n|+|t-s|\},
\end{align*}
which is consistent with the scaling of the equation, so that $d$ is equivalent with the standard Euclidean distance on the hyperplane $x_n=0$ and 
with the standard parabolic distance far away from this hyperplane. The various H\"older estimates in the next section are written with respect to this distance $d$, or after a dilation of factor $\lambda^{-1}$ with respect to the rescaled distance $d_\lambda$.
In particular, this allows us to show that solutions $v$ to the linear problem enjoy an improvement of flatness property in cylinders $\mathcal C_{\tau^k}$, which can be 
transferred further to the solutions of the nonlinear problem \eqref{nlp}.

The relation between solutions $w$ to \eqref{nlp} and $v$ to \eqref{LP0} is made precise in the next proposition. It states that $w$ satisfies essentially a comparison principle with $C^2$ subsolutions/supersolutions $v$ of \eqref{LP0} which have bounded derivatives and second derivatives in $x$.

\begin{prop}[Comparison principle]\label{CP}
Let $v \in C^2(\overline{\Omega})$ with $\Omega \subset \mathcal C_1$ satisfy
 $$|\nabla v|, |D^2v| \leq M,$$
 for some large constant $M$ and
\begin{equation}\label{LP01}\begin{cases}
\lambda v_t \leq  tr(A_\lambda(t)D^2v) - C \delta &\text{in $\Omega$,}\\
v_t \leq \gamma_\lambda(t) \cdot \nabla v -  \delta & \text{on $ \mathcal F_{1} \cap \overline{\Omega}$,}\\
\end{cases}
\end{equation}
with $A_\lambda(t)$, $\gamma_\lambda(t)$ as above.

Then
$v$  is a subsolution to \eqref{nlp}, as long as $C$ is sufficiently large, universal, and $\eps \le \eps_1(\delta,M)$. In particular, if 
$$v \leq w \quad \mbox{ on} \quad \overline{\p \Omega \setminus (\{t=0\}\cup \{x_n=0\})}$$ then 
$$v \leq w \quad \mbox{ in} \quad \Omega.$$
\end{prop}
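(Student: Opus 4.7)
The plan is to show directly that, under the stated hypotheses, $v$ is a strict classical subsolution of the nonlinear equation \eqref{nlp}. The second assertion will then follow from the standard parabolic viscosity argument applied at a positive maximum of $v-w$: any such positive maximum must be attained at a point in $(\mathcal C_1\cup\mathcal F_1)\cap\overline{\Omega}$, and a constant shift of $v$ there would touch $w$ from below, contradicting the viscosity supersolution property of $w$ in Definition \ref{DefF}.

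The computational heart is a first-order Taylor expansion of $F$ and $g$ at the slope $(a(\lambda t),0)$. Since $F(p,M)=tr(A(p)M)$ with $A\in C^1(\mathcal R_K)$, and $a(\lambda t)+\eps\nabla v\in\mathcal R_K$ once $\eps M\le c$ (universal), one obtains
$$F(a+\eps\nabla v,\tfrac{\eps}{\lambda}D^2v)=\tfrac{\eps}{\lambda}\,tr(A_\lambda(t)D^2v)+R_1,\qquad |R_1|\le C_\#\eps^2M^2/\lambda,$$
and, using $b'(\lambda t)=g(a(\lambda t))$,
$$g(a+\eps\nabla v)-b'(\lambda t)=\eps\,\gamma_\lambda(t)\cdot\nabla v+R_2,\qquad |R_2|\le C_\#\eps^2M^2,$$
where $C_\#$ depends only on $n$ and $K$.

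Next I verify the two strict inequalities of \eqref{nlp}. The boundary inequality becomes $\eps(v_t-\gamma_\lambda\cdot\nabla v)<R_2$; by \eqref{LP01} the left side is $\le-\eps\delta$, which dominates $|R_2|$ as soon as $\eps\le\delta/(2C_\#M^2)$. The interior inequality, after cancelling $\eps v_t$ against \eqref{LP01} multiplied by $\eps/\lambda$, reduces to showing
$$\lambda a'_n(\lambda t)\,x_n+b'(\lambda t)-C\delta\eps/\lambda<R_1\quad\text{in }\Omega.$$
The hypothesis $|a'_n|\le\delta\eps\lambda^{-2}$ gives $|\lambda a'_n x_n|\le\delta\eps/\lambda$; the quantity $|b'(\lambda t)|$ is bounded by a universal constant $\Lambda'$ (since $a\in\mathcal R_K$ and $g\in C^1$); and $|R_1|\le C_\#\eps^2M^2/\lambda$. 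The decisive input is $\lambda\le\delta\eps$, which forces $\delta\eps/\lambda\ge 1$, hence $C\delta\eps/\lambda\ge C$. Choosing the universal $C$ large enough to absorb $\Lambda'+1$, and then taking $\eps$ small depending on $\delta,M$ so that $C_\#\eps M^2<\delta$ (which controls $R_1$ against $\delta\eps/\lambda$), we get a strict inequality.

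It remains to run the comparison. If $(v-w)(x^*,t^*)>0$ at some $(x^*,t^*)\in\Omega$, continuity of $v-w$ on $\overline{\Omega}$ together with the hypothesis $v\le w$ on $\overline{\p\Omega\setminus(\{t=0\}\cup\{x_n=0\})}$ forces the maximum of $v-w$ to be attained at a point $(\bar x,\bar t)\in(\mathcal C_1\cup\mathcal F_1)\cap\overline\Omega$. The shift $v-c$ with $c:=(v-w)(\bar x,\bar t)>0$ is still a strict classical subsolution of \eqref{nlp} (the equations depend only on derivatives of $v$) and touches $w$ from below at $(\bar x,\bar t)$ inside a small parabolic cylinder, contradicting Definition \ref{DefF}. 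The main obstacle is the interior bookkeeping: the bounded term $b'(\lambda t)$ is not small by itself, so one genuinely needs $\lambda\le\delta\eps$ in order that the slack $C\delta\eps/\lambda$ from \eqref{LP01} absorb both $|b'(\lambda t)|$ and the Taylor remainder.
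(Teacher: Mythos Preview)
Your proof is correct and follows the same approach as the paper: Taylor-expand $F$ and $g$ around $a(\lambda t)$, then use the slack $C\delta$ in \eqref{LP01} together with the standing hypothesis $\lambda\le\delta\eps$ to absorb the lower-order terms. Your write-up is in fact more careful than the paper's terse proof—in particular, you correctly isolate the key point that $\lambda\le\delta\eps$ forces $C\delta\eps/\lambda\ge C$, which is what allows the merely bounded term $b'(\lambda t)$ to be absorbed (the paper's displayed bound $\lambda\|a'\|_{L^\infty}+\|b'\|_{L^\infty}\le C$ somewhat obscures this mechanism).
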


Similarly, we have the same result for supersolutions by replacing $\le$ by $\ge$ and the $-$ signs in \eqref{LP01} by $+$.

\begin{proof}
It is straightforward to show that \eqref{LP01} implies the corresponding inequalities for $v$ (in place of $w$) in \eqref{nlp}. We need to use the hypotheses of Proposition \ref{P!} and that
$$\lambda \|a'\|_{L^\infty} + \|b'\|_{L^\infty} \le C, \quad |A(a(\lambda t)+ \eps \nabla v)-A(a(\lambda t))| \le C \eps M,$$
$$ |g(a(\lambda t)+ \eps \nabla v)-g(a(\lambda t)) - \eps \nabla g(a(\lambda t)) \cdot \nabla v| \le C \eps^2 M^2.$$
\end{proof}

As a consequence, we obtain that if the rescaled error $w$ is close to a $C^2$ solution $v$ of \eqref{LP0} on the {\it Dirichlet boundary} of a domain $\Omega \subset \mathcal C_1$ then $v$ and $w$ remain close to each other in the whole domain $\Omega$.
 
\begin{cor}\label{Com}
Let $w$ be a solution to \eqref{nlp} and $v \in C^2$ be a solution of \eqref{LP0} in a domain $\Omega \subset \mathcal C_1$, with
 $$|\nabla v|, |D^2v| \leq M.$$
If $\eps \le \eps_1(\delta,M)$  and
$$|v - w| \le \sigma \quad \mbox{ on} \quad \overline{\p \Omega \setminus (\{t=0\}\cup \{x_n=0\})}$$ then 
$$|v - w| \le \sigma + C \delta \quad \mbox{ in} \quad \Omega.$$
\end{cor}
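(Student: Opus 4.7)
The natural strategy is to compare $w$ with suitable vertical shifts of $v$ that are perturbed so as to become strict sub/supersolutions of \eqref{LP01}, and then appeal to Proposition \ref{CP}. I will prove the upper bound $w \le v + \sigma + C\delta$; the lower bound is symmetric.

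A first attempt would be to take $\bar v := v + \sigma + C\delta$, but this plainly satisfies \eqref{LP0} with equality, not the strict inequalities required by Proposition \ref{CP} (with $\ge$, $+$). A time-dependent shift $\bar v = v + \sigma + \eta(t)$ also fails, because to overcome the interior defect $+C_0\delta$ one needs $\lambda\eta'(t) \ge C_0\delta$, forcing $\eta$ to grow of order $\delta/\lambda$ rather than $\delta$. The key observation is that a purely spatial perturbation in the $x_n$ variable is able to absorb both defects at once while remaining uniformly of order $\delta$: a concave function in $x_n$ produces a negative contribution to $\mathrm{tr}(A_\lambda D^2 \bar v)$ (handling the interior), while a negative slope at $x_n=0$ produces a negative contribution to $\gamma_\lambda \cdot \nabla \bar v$ (handling the boundary).

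Concretely, I would set
\[
\bar v(x,t) := v(x,t) + \sigma + \delta\,\phi(x_n), \qquad \phi(s) := C_1 - C_2 s - \tfrac{C_3}{2}s^2,
\]
and choose the constants in order. Using $v_t = \lambda^{-1}\mathrm{tr}(A_\lambda D^2 v)$ in the interior and $v_t = \gamma_\lambda\cdot\nabla v$ on $\mathcal F_1$, a direct computation gives
\[
\lambda \bar v_t - \mathrm{tr}(A_\lambda D^2 \bar v) = \delta C_3 (A_\lambda)_{nn}, \qquad \bar v_t - \gamma_\lambda\cdot\nabla\bar v\big|_{x_n=0} = \delta C_2 (\gamma_\lambda)_n.
\]
Since $(A_\lambda)_{nn} \ge K^{-1}$ by \eqref{AF1} and $(\gamma_\lambda)_n = g_n \ge K^{-1}$ by \eqref{AF2}, choosing $C_3 := C_0 K$ (where $C_0$ is the universal constant from Proposition \ref{CP}) and $C_2 := K$ makes $\bar v$ satisfy the supersolution version of \eqref{LP01}. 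Taking $C_1 := C_2 + C_3/2$ ensures $0 \le \phi \le C_1$ on $[0,1]$, so $0 \le \delta\phi \le C\delta$ on $\overline{\mathcal C_1}$. Moreover $|\nabla\bar v|, |D^2\bar v| \le M + C\delta \le 2M$ for $\delta$ small, so the hypothesis of Proposition \ref{CP} applies with $M$ replaced by $2M$, at the price of requiring $\eps \le \eps_1(\delta, 2M)$.

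Once Proposition \ref{CP} is available, the conclusion is immediate: on the Dirichlet portion of $\p\Omega$ we have $v - w \ge -\sigma$ by hypothesis and $\phi \ge 0$, hence $\bar v - w \ge \delta\phi \ge 0$ there; comparison then yields $\bar v \ge w$ throughout $\Omega$, i.e.\ $w \le v + \sigma + C\delta$. The analogous lower barrier $\underline v := v - \sigma - \delta\phi(x_n)$ is a strict subsolution by the same computation and gives $w \ge v - \sigma - C\delta$. The only genuinely non-routine step is the choice of barrier $\phi$: the principal obstacle is to find a perturbation that simultaneously creates the required strict inequality in both the parabolic interior equation and the oblique boundary condition while staying uniformly $O(\delta)$, independently of $\lambda$; the linear-plus-quadratic profile in $x_n$ above does this because the relevant matrix entry $(A_\lambda)_{nn}$ and the transversality coefficient $(\gamma_\lambda)_n$ are both bounded below by $K^{-1}$.
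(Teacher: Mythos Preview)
Your proof is correct and follows essentially the same approach as the paper: perturb $v$ by an explicit $O(\delta)$ auxiliary function to manufacture strict sub/supersolutions of \eqref{LP01} and apply Proposition~\ref{CP}. The paper's barrier is $v \pm (C\delta(x_n^2 - t - 2) - \sigma)$, using the linear-in-$t$ term to handle the boundary defect rather than your linear-in-$x_n$ term; both choices work for the same reason and the difference is cosmetic.
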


\begin{proof} This follows immediately by
applying Proposition \ref{CP} to 
$$ v \pm ( C \delta (x_n^2-t-2)-\sigma). $$
\end{proof}

We apply Proposition \ref{CP} and Corollary \ref{Com} to functions $v$ for which $M$ is large, universal. 
In order to apply Corollary \ref{Com} we need to show that $w$ can be well approximated near the boundary of $\mathcal C_{1/2}$ by a solution $v$ 
to \eqref{LP0} with bounded second derivatives in $x$. We prove that $w$ has essentially a H\"older modulus of continuity (as $\delta \to 0$) 
with respect to the distance $d_\lambda$ induced by $d$, and then we let $v$ be the solution to the Dirichlet problem \eqref{LP0} in $\mathcal 
C_{1/2}$ with boundary data which is sufficiently close to $w$.

\smallskip

We conclude this section by stating a version of interior Harnack inequality for $w$ with respect to constants, which is an immediate consequence of property $H(\eps^{1/2})$ of $u$ in $\mathcal C_\lambda$, see Definition \ref{H(d)}. 

As in \eqref{lat}, the error between $l_{a,b}$ and a linear function independent of $t$ in a time-interval of size $(\lambda r)^2$ is $C \delta \eps \lambda \, r^2$. Then Definition \ref{H(d)} implies the following property for $u-l_{a,b}$.

\medskip

If for some constant $\omega$
$$u -(\omega+l_{a,b}) \ge 0 \quad  \mbox{ in} \quad  Q_{\lambda r}(x_0) \times [t_0- (\lambda r)^2,t_0+ (\lambda r)^2] \subset \mathcal C _\lambda,$$
with $ r \in [\eps^{1/2},1]$, and $$(u-(\omega+l_{a,b}))(x_0,t_0) \ge \mu \eps \lambda , \quad \mbox{for some} \quad \mu \ge C \delta r^2,$$ 
then $$u-(\omega+l_{a,b}) \ge \frac{\kappa}{2} \mu \eps \lambda \quad \mbox{ in} \quad  Q_{r\lambda/2}(x_0) \times \left[t_0 +\frac 12 (\lambda r)^2,t_0+ (\lambda r)^2\right],$$
with $\kappa$ the universal constant from Definition \ref{H(d)}. In terms of $w$ this can be written as follows.

\

{\it Interior Harnack inequality for $w$.} If 
$$ w \ge \omega \quad  \mbox{ in} \quad  Q_r(x_0) \times [t_0- \lambda r^2,t_0+ \lambda r^2]  \subset  \mathcal C _1,$$
with $\omega$ a constant, $ r \ge \eps^{1/2},$ and $$w(x_0,t_0) \ge \omega + \mu, \quad \mbox{for some} \quad \mu \ge C \delta r^2,$$ 
then 
\begin{equation}\label{ihiw}
w \ge \omega + \frac{\kappa}{2} \mu \quad \mbox{ in} \quad  Q_{r/2}(x_0) \times \left[t_0 +\frac \lambda 2 r^2,t_0+\lambda r^2\right].
\end{equation}

\comment{By our assumption on the coefficients and the linearity of $F $ in $D^2 w,$ we deduce that
\begin{equation}\label{LPP}\begin{cases}
w_t = F(a(\lambda^2 t) +\eps \nabla w,  D^2 w) \pm O(\delta)&\text{in $C_{1, \frac 1 \lambda}$}\\
\  \\
w_t = \lambda \nabla g(a(\lambda^2 t) \cdot \nabla w+ \lambda o(\eps |\nabla w|^2),& \text{on $F_{1,1/\lambda}$.}
\end{cases}\end{equation}}

\section{The linearized problem}

In this section, we state various estimates for the linear problem \eqref{LP0} which are uniform in the parameter $\lambda \le 1$ and we use them to prove our main result Proposition \ref{P!}. We start with introducing the distance $d_\lambda$ with respect to which our estimates are obtained.

\subsection{ Definition of the distances $d$, $d_\lambda$ and the family of balls $\mathcal B_r$, $\mathcal B_{\lambda,r}$.} 

We define the following distance in $\R^{n+1}$
\begin{align*}
d((x,t), (y,s)) : & = \\
=\min\{&|x'-y'|+|x_n-y_n| + |t-s|^{1/2}, \quad |x'-y'|+|x_n|+|y_n|+|t-s|\},
\end{align*}
which interpolates between the parabolic distance and the standard one depending on how far points are from $\{x_n=0\}$. It is not too difficult to check that $d$ satisfies the triangle inequality.

For $r \le 1$ and points $(y,s)$ with $y_n \in [0,1],$ we define the family of ``balls" of center $(y,s)$ and radius $r,$ which are backwards in time and restricted to $\{x_n \ge 0\},$ and which are consistent with the distance induced by $d$: 
\begin{align*}
&\mathcal B_r(y,s):= Q_r(y) \times (s-r^2,s),& &\mbox{if $r<|y_n|$},\\
&\mathcal B_r(y,s):= Q^+_r(y) \times (s-r,s),& &\mbox{if $1 \ge r \ge |y_n|$,}
\end{align*}
where we recall that $$Q_r(y):=\{x \in \R^n| \, |x_i-y_i|<r \}, \quad \quad Q^+_r(y):=Q_r(y) \cap \{ x_n \ge 0\}.$$
Notice that $$(x,t) \in \mathcal B_{2r}(y,s) \setminus \mathcal B_r(y,s) \quad \Longrightarrow \quad d((x,t),(y,s)) \sim r.$$ 

A function $v: \overline U \to \R$, with $U\subset \mathcal C_1,$ is H\"older with respect to the distance $d$ if
$$[v]_{C^\alpha_d}:=\sup_{(x,t) \ne (y,s)} \,  |v(x,t)-v(y,s)| \, \, d((x,t),(y,s))^{-\alpha} < \infty.$$
Equivalently, $v \in C^\alpha_d(\overline U)$ if and only if there exists $M$ such that $\forall (x,t) \in \overline U$
$$ osc \, \, \, v \le M r^\alpha \quad \mbox{in} \quad \mathcal B_r(x,t) \cap \overline U.$$

\

{\bf Rescaling.} Assume $\lambda \le 1$ and we perform a dilation of factor $\lambda^{-1}$ which maps $Q_\lambda^+$ into $Q_1^+$. We use hyperbolic scaling for the rescaled distance $d_\lambda$ of $d$ 
$$d_\lambda ((x,t), (y,s)) : = \frac 1 \lambda d(\lambda(x,t), \lambda (y,s))$$
$$=\min\{|x'-y'|+|x_n-y_n| + \lambda ^{-1/2}|t-s|^{1/2}, |x'-y'|+|x_n|+|y_n|+|t-s|\}.$$
The corresponding family of balls induced by $d_\lambda$ denoted by $\mathcal B_{\lambda,r}$ is obtained by dilating of a factor $\lambda^{-1}$ the sizes of the balls $\mathcal B_r$ above and then relabeling $\lambda^{-1}r$ by $r$. We find
\begin{align*}
&\mathcal B_{\lambda,r}(y,s):= Q_r(y) \times (s- \lambda r^2,s),& &\mbox{if $r<|y_n|$},\\
&\mathcal B_{\lambda,r}(y,s):= Q^+_r(y) \times (s-r,s),& &\mbox{if $\lambda^{-1} \ge r \ge |y_n|$,}
\end{align*}
and notice that $\mathcal B_{\lambda,r}(y,s)=\mathcal B_{r}(y,s)$ if $y_n=0$.

As above a function $v$ is H\"older with respect to the distance $d_\lambda$ in $\overline U$ and write $v \in C_{d_\lambda}^\alpha(\overline U)$ if there exists $M$ such that
$$ osc \, \, \, v \le M r^\alpha \quad \mbox{in} \quad \mathcal B_{\lambda,r}(x,t) \cap \overline U.$$

\subsection{Estimates.} Having introduced the distance $d_\lambda$, we are now ready to state the estimates for the linear problem
\begin{equation}\label{LP}\begin{cases}
\lambda v_t = tr(A(t) D^2v) &\text{in $\mathcal C_{1}$,}\\
v_t = \gamma(t) \cdot \nabla v &\text{on $\mathcal F_{1}$,}
\end{cases}
\end{equation}
with
$$K^{-1}I \le A(t) \le KI, \quad \quad K^{-1} \leq \gamma_n \leq K, \quad \quad |\gamma| \leq K$$
$$\lambda \in(0, 1], \quad \quad  |A'(t)| \le  \lambda^{-1}, \quad \quad |\gamma'(t)| \le \lambda^{-1},$$
for some large constant $K$. Here constants depending on $n$ and $K$ are called universal.

We start with an interior regularity result  (see Definition $\ref{lab}$ of $l_{a,b}$).

\begin{prop}[Interior estimates] \label{estimate} Let $v$ be a viscosity solution to \eqref{LP} such that $\|v\|_{L^\infty} \leq 1.$ Then 
$$| \nabla v|, \, \, \,  |D^2 v| \, \, \le C \quad \quad \mbox{in} \quad \mathcal C_{1/2},$$
and for each $\rho \le 1/2$, there exists $l_{\bar a,\bar b}$ such that 
$$|v - l_{\bar a, \bar b}|\leq C \rho^{1+\alpha} \quad \quad \text{in $\mathcal C_{\rho}$},$$
with 
$$\bar b'(t)= \gamma(t) \cdot \bar a, \quad \quad |\bar a'_n| \leq C \rho^{\alpha-1}\lambda^{-1}, \quad \quad |\bar a| \le C,$$ with $\alpha$, $C$ universal.

\end{prop}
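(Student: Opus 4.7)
The strategy is to bootstrap from the $C^\alpha_{d_\lambda}$-Hölder regularity established in Sections 6--7 up to $C^{1,\alpha}$-regularity, exploiting the invariance of \eqref{LP} under tangential translations (since the coefficients $A(t),\gamma(t)$ are independent of $x$). First I would invoke the interior estimate $\|v\|_{C^\alpha_{d_\lambda}(\overline{\mathcal C_{3/4}})}\le C$ from Sections 6--7. Since the translates $v(\cdot+he_i,\cdot)$ with $i<n$ solve the same problem, the difference quotients $\Delta_h^i v/h$ also solve \eqref{LP}; applying the Hölder estimate to them and passing to $h\to 0$ via Arzelà--Ascoli produces $\partial_{x_i} v \in C^\alpha_{d_\lambda}(\mathcal C_{5/8})$ uniformly. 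Iterating yields uniform $C^\alpha_{d_\lambda}$ bounds on all pure tangential derivatives $\partial^k_{x'} v$ in $\mathcal C_{1/2}$.

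For the normal direction, $\partial_{x_n} v$ still satisfies the interior equation, and its boundary values are encoded by the oblique relation
$$v_{x_n}|_{x_n=0} = \gamma_n^{-1}(t)\bigl(v_t|_{x_n=0} - \gamma'(t)\cdot\nabla_{x'} v|_{x_n=0}\bigr).$$
The tangential piece $\gamma'\cdot\nabla_{x'} v|_{x_n=0}$ is controlled by the previous step; for $v_t|_{x_n=0}$ I would tangentially differentiate the boundary condition to get $(\partial_{x_i} v)_t = \gamma\cdot\nabla(\partial_{x_i} v)$ on $\{x_n=0\}$, which applied inductively (or, equivalently, via the one-dimensional linear theory of Section 8 after reducing to plane waves in the tangential variables) closes the bootstrap and bounds $v_t$ on $\{x_n=0\}$. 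Interior parabolic estimates applied to $w=v_{x_n}$ then propagate these boundary bounds into $\mathcal C_{1/2}$. Mixed second derivatives $\partial_{x_i x_n} v$ follow by tangentially differentiating the oblique relation, and $v_{x_n x_n}$ is recovered algebraically from the interior equation
$$A_{nn}(t)\,v_{x_n x_n} = \lambda v_t - \sum_{(i,j)\neq(n,n)} A_{ij}(t)\, v_{x_i x_j},$$
whose right-hand side is uniformly bounded.

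For the $l_{\bar a,\bar b}$ approximation, set $\bar a_i:=v_{x_i}(0,0)$ for $i<n$, $\bar b(t):=v(0,t)$, and define $\bar a_n(t)$ to enforce the compatibility $\bar b'(t)=\gamma(t)\cdot\bar a(t)$. Since $\bar b'(t)=v_t(0,t)=\gamma(t)\cdot\nabla v(0,t)$ by the boundary condition, this forces
$$\bar a_n(t) = v_{x_n}(0,t) + \gamma_n(t)^{-1}\gamma'(t)\cdot\bigl(\nabla_{x'} v(0,t)-\nabla_{x'} v(0,0)\bigr).$$
The required bound $|\bar a_n'(t)|\le C\rho^{\alpha-1}\lambda^{-1}$ records precisely the hypothesis $|\gamma'|\le\lambda^{-1}$ combined with the $C^\alpha$-in-$t$ regularity of $\nabla_{x'} v(0,\cdot)$ on $\{x_n=0\}$ (where $d_\lambda$ reduces to the hyperbolic metric $|x'-y'|+|t-s|$). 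Finally $|v-l_{\bar a,\bar b}|\le C\rho^{1+\alpha}$ in $\mathcal C_\rho$ follows by Taylor expansion at $(0,0)$, decomposing the error into (i) the first-order spatial Taylor remainder of $v(\cdot,t)$ at $x=0$, bounded by $C|x|^{1+\alpha}$; (ii) the correction $(\nabla_{x'} v(0,0)-\nabla_{x'} v(0,t))\cdot x' = O(|t|^\alpha|x'|)$; and (iii) $(\bar a_n(t)-v_{x_n}(0,t))\,x_n$, bounded using the formula above; on $\mathcal C_\rho$ each term is $O(\rho^{1+\alpha})$.

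The main obstacle is that $v_t$ is not uniformly bounded as $\lambda\to 0$ --- it is of order $\lambda^{-1}$ in the interior, while the spatial derivatives $\nabla v$, $D^2 v$ remain $O(1)$ --- and these two scales are intertwined through the oblique boundary condition. The mixed parabolic/hyperbolic distance $d_\lambda$ is designed precisely to encode this dichotomy, and the $\lambda^{-1}$ factor appearing in $|\bar a_n'|$ is the quantitative manifestation of how rapidly the normal slope of $v$ on $\{x_n=0\}$ may evolve in time.
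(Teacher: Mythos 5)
Your overall architecture—Hölder regularity from the Harnack theory, tangential translation invariance to bootstrap $D^k_{x'}v$, 1D reduction for the normal direction—matches the paper's. But there are two genuine gaps.

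First, the explicit formula
$$\bar a_n(t) := v_{x_n}(0,t) + \gamma_n(t)^{-1}\gamma_{\mathrm{tan}}(t)\cdot\bigl(\nabla_{x'}v(0,t)-\nabla_{x'}v(0,0)\bigr)$$
does not define a $C^1$ function of $t$, so the bound $|\bar a_n'|\le C\rho^{\alpha-1}\lambda^{-1}$ is not meaningful for it. The trace $v_n(0,\cdot)$ is only H\"older in time: Lemma \ref{2D} gives at best $|w_x(0,t)-w_x(0,s)|\le C\lambda^{-(1+\alpha)/2}|t-s|^{(1+\alpha)/2}$, which is strictly weaker than Lipschitz. The paper addresses this by taking $a_n(t):=v_n(0,t)$ and then \emph{mollifying}: convolving $a_n$ with $\eta_{\lambda\rho^2}$ produces a genuinely differentiable $\bar a_n$ satisfying $|\bar a_n'|\le C\lambda^{-1}\rho^{\alpha-2}$, while $|a_n-\bar a_n|\le C\rho^\alpha$ so the approximation error in $\mathcal C_\rho$ shifts only by $O(\rho^{1+\alpha})$, and $\bar b$ is then redefined by the ODE $\bar b'=\gamma\cdot\bar a$ with $\bar b(0)=b(0)$. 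Your proof as written has no way to produce a differentiable $\bar a_n$, and the conclusion $|\bar a_n'|\le C\rho^{\alpha-1}\lambda^{-1}$ is precisely what the mollification scale is tuned to achieve.

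Second, recovering $v_{x_nx_n}$ ``algebraically'' from $A_{nn}v_{x_nx_n}=\lambda v_t-\sum_{(i,j)\ne(n,n)}A_{ij}v_{x_ix_j}$ is circular: to conclude that the right-hand side is bounded up to $\{x_n=0\}$ you need $\lambda v_t$ bounded up to $\{x_n=0\}$, and in the interior the only a priori control on $\lambda v_t$ comes from the very quantity $A_{nn}v_{x_nx_n}+\ldots$ you are trying to bound. (Knowing $v_t$ bounded on $\{x_n=0\}$ via the oblique condition does not propagate to $\lambda v_t$ bounded in a neighborhood.) The paper instead obtains $|v_{nn}|\le C$ from the boundary Schauder estimate in the 1D problem—Lemma \ref{2D}, part b), which runs a two-round bootstrap: part a) gives $v_n, v_{in}\in C^\alpha_d$, which upgrades the 1D forcing data $h,f$ to $C^\alpha_d$, which in turn lets part b) deliver $|w_{xx}|\le C$. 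Your proposal references Section 8 for closing the loop on $v_t|_{x_n=0}$, which is on the right track, but it treats the $v_{nn}$ and $|\bar a_n'|$ bounds as routine consequences when in fact both require the 1D boundary Schauder theory and the mollification step that the paper carries out explicitly in Steps 3–4.

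A smaller issue: you use $\gamma'$ both for the tangential part $(\gamma_1,\ldots,\gamma_{n-1})$ and for the time derivative $\partial_t\gamma$ in adjacent sentences; this conflation obscures the fact that the $\lambda^{-1}$ in the final bound comes from the assumed bound $|\partial_t\gamma|\le\lambda^{-1}$ only in part, and primarily from choosing the mollification time-scale $\lambda\rho^2$.
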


In terms of the Dirichlet problem for \eqref{LP}, we define the {\it Dirichlet boundary} of $\mathcal C_1$ as 
$$\p_D \mathcal C_1:= \p \mathcal C_1 \cap \left( \{t=-1\}  \cup \{x_n=1\} \cup_{i=1}^{n-1} \{|x_i|=1 \}\right).$$ 

Notice that $\p_D \mathcal C_1$ is different from the standard parabolic boundary since the points on $\mathcal F_1$ are also excluded. 

\begin{prop}[The Dirichlet problem] \label{DiPr} Let $\phi$ be a continuous function on $\p_D \mathcal C_1$. Then there exists a unique classical solution $v \in C^{2,1} (\mathcal C_1) \cap C^0(\bar {\mathcal C_1})$ to the Dirichlet problem \eqref{LP} with $v=\phi$ on $ \p_D \mathcal C_1$. Moreover,
$$|\nabla v|, |D^2 v| \le C(\sigma) \|v\|_{L^\infty} \quad \mbox{in} \quad C_1^\sigma:=\{ d_\lambda((x,t), \p_D \mathcal C_1) \ge \sigma\},$$
and if $\phi$ is $C^\alpha$ with respect to the distance $d_\lambda,$ then $v$ is also $C^\alpha$ up to the boundary and 
$$\|v\|_{C^
\alpha_{d_\lambda}} \le C \|\phi\|_{C^\alpha_{d_\lambda}},$$
with $C(\sigma)$, $C$ universal constants (independent of $\lambda$).
\end{prop}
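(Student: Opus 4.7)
The plan is to prove the three assertions in order: uniqueness, interior $C^{2,1}$ estimates, then boundary $C^\alpha_{d_\lambda}$ regularity together with existence by approximation. \emph{Uniqueness} follows from a maximum principle adapted to the oblique boundary: at a positive interior maximum of a solution the bulk equation $\lambda v_t = \operatorname{tr}(A D^2 v)$ rules out the maximum lying in $\mathcal{C}_1$, so it must lie on $\mathcal{F}_1$, where $v_t \geq 0$, $\nabla_{x'} v = 0$, and $v_n \leq 0$. Then $v_t = \gamma_n v_n$ combined with $\gamma_n \geq K^{-1}$ forces $v_t = v_n = 0$, and a Hopf-type strong maximum principle contradicts strict positivity in the bulk unless $v$ is constant. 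Since the equation is linear, this yields the full comparison principle that underlies the rest of the argument.

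\emph{Interior $C^{2,1}$ estimates.} Fix $\sigma > 0$ and $(x_0, t_0) \in \mathcal{C}_1^\sigma$, so that $\mathcal{B}_{\lambda, \sigma/4}(x_0, t_0) \subset \overline{\mathcal{C}_1} \setminus \p_D \mathcal{C}_1$. If $(x_0)_n \geq \sigma/4$, the $d_\lambda$-ball is the standard parabolic cylinder $Q_{\sigma/4}(x_0) \times (t_0 - \lambda(\sigma/4)^2, t_0) \subset \{x_n > 0\}$; the time rescaling $\tilde v(x, s) := v(x, \lambda s)$ converts the bulk equation into $\tilde v_s = \operatorname{tr}(A(\lambda s)\, D^2 \tilde v)$ with coefficients whose $s$-derivative is bounded by $1$, and classical interior parabolic Schauder yields $|\nabla v|, |D^2 v| \leq C(\sigma)\|v\|_\infty$. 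If instead $(x_0)_n < \sigma/4$, the ball $Q^+_{\sigma/4}(x_0) \times (t_0 - \sigma/4, t_0)$ touches $\mathcal{F}_1$; since the coefficients depend only on $t$, tangential $x'$-differences of $v$ solve the same linear problem, so the boundary $C^\alpha_{d_\lambda}$ estimate of the next paragraph bootstraps to tangential smoothness. The oblique condition then expresses $v_n$ on $\mathcal{F}_1$ algebraically through $v_t$ and $\nabla_{x'} v$, and combined with the bulk equation viewed as a half-space problem with prescribed Dirichlet and Neumann data this closes the $C^{2,1}$ estimate.

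\emph{Boundary $C^\alpha_{d_\lambda}$ estimate and existence.} The core ingredient is a Krylov--Safonov-type oscillation decay: there exist universal $\alpha, \theta \in (0,1)$ such that for every $d_\lambda$-ball $\mathcal{B}_{\lambda, r}(x_0, t_0) \subset \mathcal{C}_1 \cup \mathcal{F}_1$,
\[
\operatorname{osc}_{\mathcal{B}_{\lambda, r/2}(x_0, t_0)} v \,\leq\, \theta \,\operatorname{osc}_{\mathcal{B}_{\lambda, r}(x_0, t_0)} v.
\]
Away from $\mathcal{F}_1$ this reduces to standard parabolic Krylov--Safonov after the time rescaling above; touching $\mathcal{F}_1$ it follows from a weak Harnack inequality in the bulk combined with an oblique-derivative barrier argument on $\mathcal{F}_1$, constructed in the $d_\lambda$-geometry. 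Iteration yields the interior $C^\alpha_{d_\lambda}$ bound, and comparison with barriers pushed against the given data on $\p_D \mathcal{C}_1$ extends it globally to $\|v\|_{C^\alpha_{d_\lambda}} \leq C \|\phi\|_{C^\alpha_{d_\lambda}}$. For existence, mollify $A, \gamma$ in $t$ (preserving $|A'|, |\gamma'| \leq \lambda^{-1}$ up to constants) and smooth $\phi$ on $\p_D \mathcal{C}_1$; classical linear parabolic theory for oblique boundary problems solves the smoothed problem, and the uniform $C^\alpha_{d_\lambda}$ bound furnishes the equicontinuity required to pass to the limit. The hard part of the whole argument is this boundary oscillation decay: the distance $d_\lambda$ interpolates between bulk parabolic scaling (time $\sim \lambda r^2$) and boundary hyperbolic scaling (time $\sim r$), and one must construct barriers (or test functions in a weak-$L^p$ framework) that simultaneously dominate $\lambda \p_t - \operatorname{tr}(A D^2 \cdot)$ in the bulk and $\p_t - \gamma \cdot \nabla$ on $\mathcal{F}_1$ with constants uniform in $\lambda \in (0,1]$ — this is the $\lambda$-independence that forces us to work with $d_\lambda$ in the first place, and is presumably developed in the later sections on the one-dimensional and general linear problems.
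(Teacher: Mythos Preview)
Your outline for the interior $C^{2,1}$ estimates and the boundary $C^\alpha_{d_\lambda}$ oscillation decay is essentially the same as the paper's: the tangential-derivative bootstrap plus a reduction to a one-dimensional problem is exactly Proposition~\ref{estimate} together with Lemma~\ref{2D}, and the oscillation decay in $d_\lambda$-balls is Theorem~\ref{H}. Your closing step ``this closes the $C^{2,1}$ estimate'' is a bit hand-waved---the paper actually needs the careful 1D analysis of Section~8 to bound $v_{nn}$---but the strategy matches.

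The genuine divergence, and the genuine gap, is in the existence argument. You propose to mollify $A$, $\gamma$, and $\phi$, appeal to ``classical linear parabolic theory for oblique boundary problems'' to solve the smoothed problem, and pass to the limit using the uniform $C^\alpha_{d_\lambda}$ bound. The difficulty is that even after mollification the problem remains a \emph{mixed} boundary value problem---Dirichlet on five faces of a cube, oblique on the sixth---in a domain with edges; the paper explicitly notes (Section~9) that it is not aware of an existence result in the literature that applies directly here. So the sentence on which your whole existence step hangs is precisely the nontrivial part. The paper instead uses Perron's method: it defines $v$ as the supremum over continuous subsolutions, constructs explicit local barriers at each point of $\p_D\mathcal C_1$ (built from the same subsolutions $\phi_r$ that drive Lemma~\ref{phi} and Lemma~\ref{phiB}), and then shows continuity of the Perron envelope by exploiting $x'$-translation invariance together with the time-regularity Lemma~\ref{nabw} and Remark~\ref{perr}.

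This also explains why the paper proves the much more delicate viscosity comparison principle (Lemma~\ref{comp}) rather than your classical maximum-principle-plus-Hopf argument. Your argument is fine for two solutions that are $C^1$ up to $\mathcal F_1$ (indeed, adding $-\eps t$ already avoids Hopf), but Perron's method requires comparing an arbitrary continuous subsolution with a supersolution, neither of which is a~priori differentiable on $\mathcal F_1$. The paper's proof of Lemma~\ref{comp} accordingly passes through sup-convolutions in $x'$, semiconvexity, and an iterated bootstrap of the boundary trace regularity---substantially more work than the classical case. If your approximation scheme were available, your simpler uniqueness would suffice; but since it is not, the viscosity comparison is unavoidable.
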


Here $$\|v\|_{C^\alpha_{d_\lambda}} := \|v\|_{L^\infty} +\sup_{(x,t) \ne (y,s)} \,  |v(x,t)-v(y,s)| d_\lambda((x,t),(y,s))^{-\alpha}.$$

The proofs of Propositions \ref{estimate} and \ref{DiPr} are based on a Harnack inequality for solutions to \eqref{LP}, which we provide in the next section. The Harnack inequality holds for more general 
equations of the same type with measurable coefficients. It applies also for solutions $w$ to the nonlinear problem \eqref{nlp} up to scale $\eps^{1/2}$.  
To state it, we recall the definition of the maximal Pucci operators
\begin{equation}\label{Pucci} \mathcal M_K^+ (N) = \max_{K^{-1} I \le A \le K I} \quad tr \, \,  A N, \quad \quad  \quad \mathcal M_K^- (N) = \min_{K^{-1} I \le A \le K I} \quad tr \, \, A N.\end{equation}

\begin{thm}[H\"older continuity]\label{H} Let $v$ be a viscosity solution to 
\begin{equation}\label{LP*}\begin{cases}
\mathcal M^+_K(D^2 v) \ge \lambda v_t \ge \mathcal M^-_K(D^2 v) &\text{in $\mathcal C_{1}$},\\
\ \\

K^{-1} v_n^--K v_n^+ - K |\nabla_{x'}v| \ge v_t \ge  K^{-1} v_n^+-K v_n^- - K |\nabla_{x'}v|  &\text{on $\mathcal F_{1}$.}
\end{cases}
\end{equation}
Then $v$ is locally H\"older continuous in $\mathcal C_{1/2}$ with respect to the metric induced by $d_\lambda,$ that is
$$\|v\|_{C_{d_\lambda}^\alpha(\mathcal C_{1/2})} \leq C \|v\|_{L^\infty(\mathcal C_1)}.$$
Moreover, if $v$ is continuous up to the boundary and $v=\phi$ on $ \p_D \mathcal C_1$ with $\phi \in C_{d_\lambda}^\alpha$ then $v \in C^\alpha_{d_\lambda}$ up to the boundary and
$$\|v\|_{C^
\alpha_{d_\lambda}} \le C \|\phi\|_{C^\alpha_{d_\lambda}}.$$
The constants $\alpha$ and $C$ depend only on $n$ and $K$.
\end{thm}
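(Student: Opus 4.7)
The strategy is the classical Krylov--Safonov scheme, adapted to the mixed interior/boundary problem and to the distance $d_\lambda$, whose two scaling regimes (parabolic away from $\{x_n=0\}$, hyperbolic up to $\{x_n=0\}$) must be handled uniformly in $\lambda\in(0,1]$. I would reduce everything to a one-step oscillation decay
\[
\operatorname{osc}_{\mathcal B_{\lambda,\theta}(x_0,t_0)} v \;\le\; (1-\eta)\,\operatorname{osc}_{\mathcal B_{\lambda,1}(x_0,t_0)} v,
\]
valid for universal $\theta,\eta\in(0,1)$, and then iterate on the geometric sequence $\theta^k$ to obtain the H\"older exponent $\alpha=-\log(1-\eta)/|\log\theta|$.

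A crucial preliminary observation is the self-similarity of the problem under the rescalings induced by $d_\lambda$. When $r<(x_0)_n$, the parabolic rescaling $(x,t)\mapsto((x-x_0)/r,(t-t_0)/(\lambda r^2))$ maps $\mathcal B_{\lambda,r}(x_0,t_0)$ to the unit interior parabolic cylinder $Q_1\times(-1,0)$, and $\lambda v_t=\operatorname{tr}(A\,D^2v)$ transforms into $\tilde v_t=\operatorname{tr}(\tilde A\,D^2\tilde v)$ with universal ellipticity; this case is just classical parabolic Krylov--Safonov for equations between the Pucci bounds $\mathcal M^\pm_K$. When $r\ge(x_0)_n$, the hyperbolic rescaling $(x,t)\mapsto((x-x_0)/r,(t-t_0)/r)$ maps $\mathcal B_{\lambda,r}$ to $Q_1^+\times(-1,0)$, sends the interior equation to $(\lambda r)\tilde v_t=\text{Pucci}(D^2\tilde v)$ with $\lambda r\le 1$, and preserves the boundary condition $\tilde v_t=\gamma\cdot\nabla\tilde v$ with unchanged bounds on $\gamma$. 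Thus it suffices to prove oscillation decay on $\mathcal B_{\lambda,1}(x_0,t_0)$ with $(x_0)_n=0$, uniformly in $\lambda\in(0,1]$.

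The heart of the proof is this boundary oscillation decay. Assuming $|v|\le 1$ on $\mathcal B_{\lambda,1}(0,0)$, I would run the standard dichotomy based on the measure of $\{v\ge 0\}$ inside $\mathcal B_{\lambda,1/2}$ and reduce to a weak-$L^\eps$ measure-to-pointwise estimate compatible with $d_\lambda$. The main new ingredient is an ABP-type bound for the mixed problem \eqref{LP*} together with a Calder\'on--Zygmund decomposition of $\mathcal B_{\lambda,1}$ into $d_\lambda$-balls of both scaling types. Producing the ABP step requires Pucci barriers adapted to the oblique derivative condition: a subsolution/supersolution that vanishes on a prescribed portion of $\p_D\mathcal B_{\lambda,1}$, takes a definite positive value on a large subset, and whose trace on $\mathcal F_1$ satisfies the oblique Pucci inequality in \eqref{LP*}. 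A natural template combines $|x'|^2$, $t$ and $-x_n$ with coefficients tuned so that $\gamma_n\ge K^{-1}$ absorbs the bad terms coming from the tangential components of $\gamma$; the time-dependence of $\gamma$ and $A$ is harmless since only the sign of certain directional derivatives is used pointwise. I expect this barrier construction, and the verification that the Calder\'on--Zygmund covering in the two regimes glues together compatibly and uniformly in $\lambda$, to be the principal obstacle.

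Once the interior H\"older estimate is in hand, the boundary statement follows by a standard extension argument: extend $\phi$ to $\Phi\in C^\alpha_{d_\lambda}(\overline{\mathcal C_1})$ with $\|\Phi\|_{C^\alpha_{d_\lambda}}\le C\|\phi\|_{C^\alpha_{d_\lambda}}$, apply the interior oscillation decay to $v-\Phi$ (which lies in the same Pucci class modulo lower order terms absorbed by enlarging $K$), and complement it by a separate oscillation decay at each point of $\p_D\mathcal C_1$ obtained by pinning $v$ between translates of $\Phi$ via the Pucci barriers from Step~3. Combining these yields the global bound $\|v\|_{C^\alpha_{d_\lambda}}\le C\|\phi\|_{C^\alpha_{d_\lambda}}$, with constants independent of $\lambda$.
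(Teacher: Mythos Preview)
Your overall architecture---reduce to a one-step oscillation decay, treat interior $d_\lambda$-balls by classical parabolic Krylov--Safonov, and prove the key boundary decay at points of $\{x_n=0\}$ uniformly in $\lambda$---matches the paper. The mechanism you propose for that boundary step, however, is genuinely different.

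The paper does \emph{not} run an ABP estimate or a Calder\'on--Zygmund decomposition in the $d_\lambda$ metric. It proves the boundary oscillation decay directly via explicit barriers. The core lemma fixes an elliptic profile $\phi\ge 0$ on $Q_1^+$ (solving $\mathcal M^-_K(D^2\phi)=0$ with bump-function data on $\{x_n=0\}$ and zero elsewhere) and shows: (i) if $v\ge s_0\phi$ at time $t_0$, then $v\ge s(t)\phi$ for later $t$ with $s'=-C_0 s$; (ii) if moreover $v\ge 1/2$ at a single interior point, then after one time step of length $\lambda$ the coefficient improves, $s\mapsto s+c_0\lambda$. The subsolutions used are explicit combinations of $\phi$ with a second barrier $\psi\approx d+Cd^2$ near $\p\Omega$. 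Iterating over the $\sim 1/\lambda$ time steps in $[-1,-1/2]$ (at least half of which trigger case (ii) by a dichotomy on the value of $v$ at a fixed interior point) yields a universal lower bound on $\mathcal C_{1/2}$. The up-to-$\p_D\mathcal C_1$ version is a mild variant of the same barrier.

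What each approach buys: the paper's barrier iteration is short, and the $\lambda$-uniformity is automatic---$\lambda$ enters only as the time-step length, and the per-step gain $c_0\lambda$ accumulates over $\sim 1/\lambda$ steps to a universal constant. Your ABP/CZ route is more general-purpose machinery, but making the covering argument glue across the parabolic and hyperbolic regimes of $d_\lambda$ without losing a $\lambda$-dependent factor is exactly the obstacle you flag, and it is not obvious it goes through. If you pursue it, that uniformity is the one point you must nail down; the paper's explicit barriers are a cleaner substitute for the entire measure-theoretic apparatus here.
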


\begin{prop}[Harnack inequality for $w$]\label{Hanw} 
Assume that $u$ satisfies the hypotheses of Proposition $\ref{P!}$ and $w$ is defined as in \eqref{wdef}. Then 
$$osc _{\, \, \mathcal B_{\lambda,r}(x_0,t_0)} \, w \le C r^\alpha, \quad \quad \quad \quad \forall (x_0,t_0) \in \mathcal C_{1/2},\quad r \ge C(\delta) \eps^{1/2},$$
provided that $\delta\le c'$ universal.
\end{prop}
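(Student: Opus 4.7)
The strategy is to establish an oscillation decay dichotomy and iterate it dyadically. Write $\omega(r) := \mathrm{osc}_{\mathcal{B}_{\lambda,r}(x_0,t_0)} w$. The goal is to prove that for some universal $\theta\in(0,1)$ and all $r \in [C(\delta)\eps^{1/2}, 1/2]$,
$$\omega(r/2) \le (1-\theta)\,\omega(r) + C\delta\, r^2,$$
and then iterate. Provided $\delta$ is small enough, this yields $\omega(r) \le Cr^\alpha$ for $\alpha$ universal and $r\ge C(\delta)\eps^{1/2}$, as claimed.

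\textbf{Interior scales.} Consider first $(x_0,t_0)\in\mathcal{C}_{1/2}$ with $(x_0)_n > r$, so $\mathcal{B}_{\lambda,r}(x_0,t_0) = Q_r(x_0)\times(t_0-\lambda r^2,t_0)$ is a standard parabolic cylinder contained strictly in $\mathcal{C}_1$. Let $M = \sup$ and $m = \inf$ of $w$ on $\mathcal{B}_{\lambda,2r}(x_0,t_0)$, and set $\omega_0 = (M+m)/2$. Assuming $\omega(2r) \ge C\delta r^2$ (otherwise the dichotomy is trivial), either $w - m$ or $M - w$ is at least $\omega(2r)/2$ at some base point on the ``earlier half'' of $\mathcal{B}_{\lambda,2r}$. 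The interior Harnack inequality for $w$ stated right before the proposition (valid since $r\ge \eps^{1/2}$) propagates this lower bound forward in time to a constant multiple $\kappa\,\omega(2r)/4$ on a time-forward sub-cylinder that covers $\mathcal{B}_{\lambda,r}(x_0,t_0)$ after a finite chain of applications. Subtracting from $M-m$ gives the dichotomy with $1-\theta = 1 - \kappa/C$.

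\textbf{Boundary scales.} Now consider $(x_0,t_0)\in\mathcal{C}_{1/2}$ with $(x_0)_n \le r$. Then $\mathcal{B}_{\lambda,r}(x_0,t_0) = Q_r^+(x_0)\times(t_0-r,t_0)$, whose time extent $r$ is much longer than the parabolic time $\lambda r^2$. One cannot apply the interior Harnack directly because the cylinder is not contained in $\{x_n>0\}$, and because a single Harnack step covers only a parabolic time scale. We handle this by: (i) using the interior case to control $\mathrm{osc}\,w$ on ``good'' sub-cylinders at height $x_n \sim \rho r$ for a small parameter $\rho$; (ii) chaining these interior estimates along the time direction across the interval $(t_0-r,t_0)$, which requires $\sim 1/(\lambda r)$ steps, each preserving a $(1-\theta')$ contraction, acceptable since $r\ge \eps^{1/2}$; (iii) extending control into the boundary layer $\{x_n\le \rho r\}$ by the comparison principle against a barrier of the form $l_{\tilde a,\tilde b} \pm C\delta(x_n^2 - t - 2)$, which is a sub-/supersolution to the linearized problem \eqref{LP0} by Proposition \ref{CP}, and which respects the oblique boundary relation $\tilde b' = g(\tilde a)$.

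\textbf{Iteration and main obstacle.} Iterating the dichotomy at scales $r_k = 2^{-k}r_0$ gives
$$\omega(r_k) \le (1-\theta)^k \omega(r_0) + C\delta\sum_{j=0}^{k-1}(1-\theta)^{k-1-j} r_j^2,$$
which is bounded by $C r_k^\alpha$ for $\alpha$ with $2^{-\alpha} = 1-\theta/2$, as long as $r_k \ge C(\delta)\eps^{1/2}$ (the threshold inherited from the validity of the Harnack inequality for $w$, together with the requirement $\omega(r)\ge C\delta r^2$ at each step). The main obstacle is the boundary step (ii)–(iii): the hyperbolic time scale of $\mathcal{B}_{\lambda,r}$ near $\{x_n=0\}$ forces us to chain many parabolic Harnack applications, and the oblique condition must be respected throughout. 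This is precisely why the distance $d_\lambda$ is the correct object of study, and why the nondegenerate threshold $\mu \ge C\delta r^2$ in the Harnack hypothesis dictates the $\eps^{1/2}$ cutoff in the statement.
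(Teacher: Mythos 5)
Your overall scheme (dyadic oscillation dichotomy, split into interior scales where $\mathcal{B}_{\lambda,r}$ is a parabolic cylinder vs.\ boundary scales where it is hyperbolic) is in the right spirit, and the interior branch via \eqref{ihiw} is essentially what the paper does. But the boundary branch, as written, has a genuine gap. In step (ii) you propose to control a boundary ball $Q_r^+(x_0)\times(t_0-r,t_0)$ by chaining interior (parabolic) Harnack steps across the time interval of length $r$. Each parabolic step at height $x_n\sim\rho r$ covers a time of order $\lambda(\rho r)^2$, so the chain has length of order $1/(\lambda r \rho^2)$, and each link multiplies the propagated positivity bound by a fixed $\kappa<1$. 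The accumulated constant $\kappa^{1/(\lambda r\rho^2)}$ degenerates as $\lambda\to 0$, which is exactly the $\lambda$-dependence the proposition must avoid. There is no way to make this chain ``acceptable since $r\ge\eps^{1/2}$'': the obstruction is the ratio $r/(\lambda r^2)$, which blows up uniformly in $\eps$ as $\lambda\to 0$. And the barrier $l_{\tilde a,\tilde b}\pm C\delta(x_n^2-t-2)$ in your step (iii) is a localization/comparison device (cf.\ Corollary \ref{Com}); it does not by itself propagate a pointwise lower bound forward across the hyperbolic time interval.

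The paper overcomes precisely this obstruction with Lemma \ref{phi}: the explicit space--time barrier $s(t)\phi(x)$ with $s'=-C_0 s$ (and then the refinements $q$, $z$) is a genuine subsolution of the oblique problem that propagates positivity over a time interval of order one with only a fixed geometric loss $e^{-C_0}$, independent of $\lambda$ and of how many parabolic steps that interval contains; the ``recharge'' at each $t_k=t_0+k\lambda$ uses a single parabolic-step Harnack application, not a long chain. This feeds into the boundary oscillation decay for $w$ (Lemma \ref{dim_osc_w}), and Proposition \ref{Hanw} is then obtained by re-reading that decay in terms of $u$ as ``hypotheses of Proposition \ref{P!} hold again in $\mathcal{C}_{\lambda/2}$ with $\eps\mapsto 2(1-c)\eps$ and $H(\eps^{1/2})\mapsto H(2\eps^{1/2})$,'' and iterating $k$ times as long as $2^k\eps^{1/2}\le\eps_2(\delta)$. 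So the missing idea in your argument is a barrier that replaces the long Harnack chain on the hyperbolic time scale; without it, the boundary branch and therefore the whole iteration carries a hidden $\lambda$-dependent constant that ruins the conclusion.
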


\subsection{Proof of Proposition $\ref{P!}$.} Using the results above we can complete the proof of Proposition \ref{P!}.

\begin{proof}[Proof of Proposition $\ref{P!}$.] We divide the proof in two steps.

\medskip

{\bf Step 1.} We prove that there exists a solution $v$ to \eqref{LP0} which approximates $w$ well in $\mathcal C_{1/2}$, that is
$$|v-w| \leq C \delta  \quad \text{in $\mathcal C_{1/2}$},$$
provided that $\eps \le \eps_1(\delta)$.

 Indeed, by Proposition \ref{Hanw} we know that there exists a function $\phi$ defined in $\mathcal C_{1/2}$ such that 
\begin{equation}\label{wdelta}|w-\phi| \leq \delta, \quad \quad \|\phi\|_{C^\alpha_{d_\lambda}} \leq C.\end{equation}
Let $v$ be the solution to \eqref{LP0} in $ \mathcal C_{1/2}$ with $v=\phi$ on $\p_D \mathcal C_{1/2}$, which exists in view of Proposition \ref{DiPr} and satisfies,
\begin{equation}\label{vdelta}\|v\|_{C^\alpha_{d_\lambda}} \leq C.\end{equation}
Then, if $d_\lambda((x,t), \p_D \mathcal{C}_{1/2}) \leq \delta^{1/\alpha}$, there exists $(y,s)$ on  $\p_D \mathcal{C}_{1/2}$ so that (using \eqref{vdelta} and \eqref{wdelta}),
$$|v(x,t) - \phi(y,s)| \leq C\delta, \quad \quad |w(x,t) - \phi(y,s)|\leq  C\delta,$$
thus,
\begin{equation}\label{sigma}|v-w| \leq C \delta \quad \quad \text{on} \quad   \mathcal C_{1/2} \cap \{d_\lambda((x,t), \p_D \mathcal C_{1/2}) \leq \delta^{1/\alpha}\}.\end{equation} In particular 
$$|v-w| \leq C \delta \quad \text{on} \quad \p_D \Omega, \quad \quad \Omega:=\mathcal C_{1/2} \cap \{d_\lambda((x,t), \p_D \mathcal{C}_{1/2}) > \delta^{1/\alpha}\}.$$
On the other hand, by Proposition \ref{DiPr}, $$|\nabla v|, |D^2 v| \leq C(\delta) \quad \quad \mbox{in} \quad \Omega.$$
Thus, using Corollary \ref{Com}, 
$$|v-w| \leq C \delta \quad \text{in} \quad \Omega,$$ which gives the desired claim.

\

{\bf Step 2.} Applying Proposition \ref{estimate}, to the solution $v$ above, we find that
$$|w- l_{\bar a, \bar b}|\leq  C \rho^{1+\alpha} + C \delta \quad \text{in} \quad \mathcal C_{\rho},$$
and
$$\bar b'(t)= \gamma_\lambda(t) \cdot \bar a, \quad |\bar a'_n| \leq C \rho^{\alpha-1} \lambda^{-1}, \quad \quad |\bar a | \leq C,$$
with $\gamma_\lambda(t)=\nabla g(a( \lambda t))$.
We choose $\rho=\tau$ small, universal, and $$\delta=\tau^{1+ \frac \alpha 2},$$ so that $\delta\le c'$ the constant from Proposition \ref{Hanw}, and
$$|w - l_{\bar a, \bar b}|\leq \frac 1 4 \tau \quad \text{in $\mathcal C_{\tau}$},\quad \quad  |\bar a'_n| \leq \frac 1 4 \, \delta \, \, \tau^{-2} \lambda^{-1}.$$
In terms of the original function $u$, this inequality implies
$$\left|u - \left(l_{a,b} + \eps \lambda l_{\bar a, \bar b}\left(\frac x \lambda, \frac{t}{\lambda}\right)\right)\right| = \eps \lambda \left|w\left(\frac x \lambda, \frac{t}{\lambda}\right) -  l_{\bar a, \bar b}\left(\frac x \lambda, \frac{t}{\lambda}\right)\right| \leq \frac{\eps}{4} \tau \lambda \quad \text{in} \quad  \mathcal C_{ \tau \lambda}.$$
Set
$$\tilde a (t) : = a(t) + \eps \, \bar a \left(\frac{t}{\lambda}\right), \quad \hat{b}(t):= b(t) + \eps \lambda \, \bar b\left(\frac{t}{\lambda}\right),$$
then
$$|u-l_{\tilde a, \hat b}|  \leq \frac{\eps}{4} \tau \lambda \quad \text{in} \quad  \mathcal C_{ \tau \lambda},$$
and 
$$|\tilde a_n'| \leq \frac {\eps \delta}{\lambda^2}\left(1  + \frac{1}{4\tau^2}\right) \leq  \frac {\eps \delta}{2(\tau\lambda)^2}.$$
Finally, we define $\tilde b$ by the ODE
$$\tilde b'=g(\tilde a), \quad \quad \tilde b(0)=\hat b (0),$$
and then we have
$${\hat b}'= b' + \eps  \bar b'\left(\frac{t}{\lambda}\right)= 
g(a(t)) + \eps \nabla g(a(t)) \cdot \bar a \left(\frac{t}{\lambda}\right)=g(\tilde a(t))+O(\eps^2)=\tilde b' + O(\eps^2).$$
If $t \in [-\tau \lambda,0]$ then
$$|(\tilde b - \hat b)(t)| \le C \eps^2 |t| \le \frac{\eps}{4} \tau \lambda,$$
which implies the desired conclusion
$$|u-l_{\tilde a, \tilde b}| \leq \frac \eps 2 (\lambda \tau) \quad \text{in} \quad  \mathcal C_{ \tau \lambda},$$
and $\tilde a, \tilde b$ satisfy the required bounds.
\end{proof}

\section{Harnack inequality}

In this section, we prove Theorem \ref{H} and Proposition \ref{Hanw}. The key ingredient is to establish a diminishing of oscillation property. As usual, universal constants depend on $n,K.$

\begin{prop}\label{dim_osc} Assume that $v$ is a viscosity solution of \eqref{LP*} and $0 \leq v \leq 1$ in $\mathcal C_{1}$. Then
$$osc_{\mathcal{C}_{1/2}} v \leq 1-c,$$
with $c>0$ universal.
\end{prop}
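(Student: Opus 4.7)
The plan is to prove the diminishing of oscillation by a dichotomy on the value of $v$ at a convenient interior reference point, say $(P_*, t_*) = (\tfrac12 e_n, -\tfrac34)$, which sits well inside $\{x_n > 0\}$ and well inside the time interval. After replacing $v$ with $1-v$ if necessary (which, modulo a careful reading of the one-sided boundary inequality, preserves the class of equations modeled by \eqref{LP*}), assume $v(P_*, t_*) \ge 1/2$ and aim to show $v \ge c$ throughout $\mathcal{C}_{1/2}$.

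The first step is an interior propagation. On the subregion $\mathcal{C}_1 \cap \{x_n \ge 1/4\}$ the boundary condition on $\mathcal{F}_1$ is irrelevant, and \eqref{LP*} reduces (after the trivial time rescaling $t \mapsto \lambda^{-1}t$, which is a diffeomorphism on any fixed time interval) to a classical Pucci parabolic inequality $\mathcal{M}^+(D^2 v) \ge v_s \ge \mathcal{M}^-(D^2 v)$. The Krylov–Safonov parabolic Harnack inequality then gives a universal constant $c_1 > 0$ with
\[
v \ge c_1 \quad \text{in the slab } Q^{n-1}_{3/4} \times [1/4, 3/4] \times [-1/2, 0].
\]

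The second and decisive step is to push this lower bound down to $\{x_n = 0\}$ by comparison with a barrier. I would look for a strict classical subsolution $\psi$ of \eqref{LP*} in the subdomain $\Omega := \mathcal{C}_{3/4} \cap \{0 \le x_n \le 1/4\}$ of the form
\[
\psi(x, t) = c_0 \, \chi(x', t)\,\bigl(1 - e^{-\mu x_n}\bigr),
\]
where $\chi$ is a smooth nonnegative cutoff supported in a fixed sub-cylinder of $\Omega$, vanishing on the Dirichlet portion of $\partial \Omega$ and equal to $1$ near $(0, 0)$, and $\mu$ is a large universal constant. The exponential profile is chosen so that on $\{x_n = 0\}$ one has $\psi_n = c_0 \mu \chi > 0$, making the favorable term $K^{-1}\psi_n^+$ on the boundary dominate all other terms by taking $\mu$ large enough (and $c_0$ small); similarly the term $-\mu^2 e^{-\mu x_n}$ in $\psi_{nn}$ dominates the interior Pucci inequality so that $\lambda \psi_t \le \mathcal{M}^-_K(D^2 \psi)$ uniformly in $\lambda \in (0,1]$. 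By Step 1 and the nonnegativity of $v$, we have $\psi \le v$ on the parabolic-type boundary of $\Omega$ (on $\{x_n = 1/4\}$ by Step 1, on the Dirichlet sides and bottom by $\chi = 0$, and on $\mathcal{F}_{3/4} \cap \Omega$ no boundary data is prescribed but the boundary oblique inequality is verified strictly by $\psi$). The comparison principle for \eqref{LP*} — applied in the class of sub/supersolutions tested against strict classical sub/supersolutions — then gives $v \ge \psi$ in $\Omega$, and evaluating $\psi$ at any point of $\mathcal{C}_{1/2} \cap \{x_n \le 1/4\}$ yields a universal positive lower bound, which combined with Step 1 gives $v \ge c$ on all of $\mathcal{C}_{1/2}$.

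The main obstacle is the barrier construction in Step 2, because the boundary inequality in \eqref{LP*} is genuinely one-sided: the coefficients of $v_n^+$ and $v_n^-$ have opposite signs, so the barrier's orientation at $\{x_n = 0\}$ must be fixed (here $\psi_n > 0$) in order for the "good" term to appear with the correct sign. This forces the $x_n$-profile to be monotone and to start with a definite positive slope at the boundary; the exponential $1 - e^{-\mu x_n}$ is the simplest object that both achieves this and produces a very negative $\psi_{nn}$ to beat the interior Pucci term uniformly in $\lambda$. Once this barrier is in hand, everything else is standard comparison and Krylov–Safonov.
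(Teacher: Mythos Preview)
Your Step 1 contains the principal gap: the interior Harnack inequality does \emph{not} give a universal lower bound on the whole time interval $[-1/2,0]$ from information at a single time $t_*=-3/4$. After your rescaling $s=t/\lambda$, the equation becomes standard parabolic, but the time interval becomes $[-\lambda^{-1},0]$, and Krylov--Safonov only propagates a lower bound forward by $O(1)$ in $s$, i.e.\ by $O(\lambda)$ in $t$. Iterating $O(\lambda^{-1})$ times produces a constant of order $\kappa^{1/\lambda}$, which is not universal. This is exactly the difficulty the proposition addresses, and it is why the paper does not argue from a single reference time. Instead the paper slices $[-1,-1/2)$ into $O(\lambda^{-1})$ subintervals $[t_k,t_{k+1})$ with $t_k=-1+k\lambda$, assumes $v(\tfrac12 e_n,t_k+\lambda/4)\ge 1/2$ for at least half the $k$'s, and uses a time-independent elliptic barrier $\phi$ (with $\mathcal M^-_K(D^2\phi)=0$ and carefully chosen boundary data) to show $v(\cdot,t)\ge s(t)\phi$ with $s'=-C_0 s$ in $t$; the decay rate $C_0$ is universal over the \emph{original} time of length $1$, and each ``good'' $t_k$ provides an additive boost $s_{k+1}\ge s_k+c_0\lambda$. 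The balance of boost versus decay over $O(\lambda^{-1})$ steps yields a universal lower bound.

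Your barrier in Step 2 also has the wrong sign. The profile $1-e^{-\mu x_n}$ is concave in $x_n$, so $\psi_{nn}=-c_0\chi\,\mu^2 e^{-\mu x_n}<0$ and $\mathcal M^-_K(D^2\psi)\le -c\,\mu^2$ where $\chi>0$ near $\{x_n=0\}$; since $\lambda\psi_t$ is bounded, the subsolution inequality $\lambda\psi_t\le \mathcal M^-_K(D^2\psi)$ fails for large $\mu$. A subsolution of $\lambda w_t=\mathcal M^-_K(D^2w)$ needs a \emph{convex} (or at least $\mathcal M^-$--nonnegative) spatial profile; this is why the paper's barrier is the solution of an elliptic extremal equation rather than an explicit exponential, and why the time evolution is carried separately in the scalar $s(t)$.
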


In order to prove Proposition \ref{dim_osc} we start with a lemma. Let $\Omega$ be a smooth domain in $\R^n$, $n \geq 2,$ such that
$$\bar Q^+_{3/4} \subset \bar \Omega \subset \bar Q^+_{7/8},$$
and call
$$T:= \{x_n=0\} \cap Q_{3/4} \subset \p\Omega.$$ 
Define $\eta(x')$ a standard bump function supported on $Q'_{5/8}$ and equal 1 on $Q'_{1/2}$ (here the prime denotes cubes in $\R^{n-1}$).
Let $\phi$ satisfy (see \eqref{Pucci} for the definition of the Pucci operator),
$$\mathcal M^-_{K}(D^2\phi)= 0 \quad \text{in $\Omega$},$$
$$\phi= 0 \quad \text{on $\p \Omega \setminus T$}, \quad \phi=\eta \quad \text{on $T$},$$ and notice that $0 \le \phi \le 1$, $\phi \geq c$ on $Q_{1/2}^+$, and by Hopf lemma $\phi_n>0$ on $\{x_n=0\} \cap \{\phi=0\}$. The following lemma holds.

\begin{lem}\label{phi} Let $v \geq 0$ satisfy
\begin{equation}\label{LP1}\begin{cases}
\mathcal M^+_K(D^2 v) \ge \lambda v_t \ge \mathcal M^-_K(D^2 v)&\text{in $\mathcal C_{1}$,}\\
v_t \ge  K^{-1} v_n^+-K v_n^- - K |\nabla_{x'}v| &\text{on $\mathcal F_{1}$,}
\end{cases}
\end{equation}
in the viscosity sense. If for some $t_0 \in (-1,0]$,
$$v(x, t_0) \geq s_0 \, \, \phi(x) \quad \text{in $Q^+_1$}, \quad s_0 \geq 0,$$
then
$$v(x,t) \geq s(t) \, \phi(x) \quad \text{in $Q^+_1 \times [t_0, 0],$}$$
with
$$s'(t)= - C_0 s(t), \quad s(t_0)=s_0, \quad \quad \mbox{$C_0$ large universal}.$$ 
Moreover, if $s_0 \le c_0$ with $c_0$ small universal, and
\begin{equation}\label{center}v\left(\frac 1 2 e_n, t_0+\lambda/4 \right) \geq \frac 12,\end{equation}
then 
$$v(x,t_0+\lambda) \geq (s_0 + c_0\lambda) \phi(x).$$
\end{lem}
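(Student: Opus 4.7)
The proof has two parts, both based on comparing $v$ against explicit subsolutions built from $\phi$.

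\emph{Stage 1 (evolution lower bound).} Set $\underline v(x,t) := s(t)\phi(x)$. The plan is to show $\underline v$ is a viscosity subsolution of \eqref{LP1} on $Q_1^+\times[t_0,0]$ and then apply comparison against the supersolution $v$. The interior inequality $\lambda\underline v_t = \lambda s'\phi \le 0 = \mathcal M^-(D^2\underline v)$ is immediate from $s'\le 0$ together with $\mathcal M^-(D^2\phi)=0$. On $T$, after cancelling $s$ and using $s'=-C_0 s$, the oblique boundary condition reduces to the pointwise inequality
\[
C_0\phi + K^{-1}\phi_n^+ \;\ge\; K\phi_n^- + K|\nabla_{x'}\phi|.
\]
This is checked in two regimes: where $\eta(x')>0$, $\phi=\eta$ is bounded below and $C_0\phi$ dominates the bounded right-hand side for $C_0$ large universal; where $\eta(x')=0$, one has $\nabla_{x'}\eta=0$ by the choice of bump, and Hopf's lemma yields $\phi_n>0$, so the right-hand side is non-positive while the left is non-negative. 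Extending $\phi$ by zero on $Q_1^+\setminus\Omega$, the parabolic-boundary data for comparison are $\underline v=0\le v$ on $\p_D Q_1^+$ and $\underline v(\cdot,t_0)=s_0\phi\le v(\cdot,t_0)$ by hypothesis. Viscosity comparison then gives $v\ge s(t)\phi$.

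\emph{Stage 2 (improvement from the center bound).} The plan is to add to $\underline v$ a barrier $\psi$ on the slab $[t_0+\lambda/2,\,t_0+\lambda]$ that transports the excess given by \eqref{center} into a uniform lift of order $\lambda$ in the coefficient of $\phi$. First, in the interior region $\{x_n\ge 1/4\}$ equation \eqref{LP1} is a standard uniformly parabolic Pucci equation once $t$ is rescaled by $\lambda^{-1}$, so the parabolic Harnack inequality propagates $v(\tfrac12 e_n,\,t_0+\lambda/4)\ge 1/2$ to
\[
v \;\ge\; c_1 \qquad\text{on}\qquad Q_{1/8}(\tfrac12 e_n)\times[t_0+\lambda/2,\,t_0+\lambda],
\]
for a universal $c_1$. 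Next, build $\psi$ in the form
\[
\psi(x,t) \;=\; \alpha\,(t-t_0-\lambda/2)\,\phi(x) \;+\; \beta\,\chi(x),
\]
where $\chi\ge 0$ is a smooth cut-off supported inside $Q_{1/8}(\tfrac12 e_n)$ and $\alpha,\beta>0$ are universal. Because $\phi$ annihilates $\mathcal M^-$ and the latter is concave, the interior subsolution check for $\underline v + \psi$ reduces to the analogous Stage 1 inequality; the oblique boundary check on $T$ reuses the same pointwise analysis with $C_0$ possibly enlarged. We tune $\alpha,\beta,c_0$ universally so that (i) $\psi(\cdot,t_0+\lambda/2)\le c_1$ on $Q_{1/8}(\tfrac12 e_n)$ (so that $\underline v+\psi\le v$ on the parabolic boundary, using the Harnack bound), and (ii) $\psi(\cdot,t_0+\lambda)\ge (1+C_0)c_0\lambda\,\phi$ on $Q_1^+$.

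Applying the Stage 1 comparison to $(v,\;\underline v+\psi)$ on $Q_1^+\times[t_0+\lambda/2,\,t_0+\lambda]$ and using $s(t_0+\lambda)=s_0 e^{-C_0\lambda}\ge s_0-C_0 s_0\lambda\ge s_0-C_0 c_0\lambda$ (valid since $s_0\le c_0$), we obtain
\[
v(\cdot,t_0+\lambda) \;\ge\; s(t_0+\lambda)\phi + (1+C_0)c_0\lambda\,\phi \;\ge\; (s_0+c_0\lambda)\,\phi,
\]
which is the desired conclusion. The main obstacle is the fine balancing in the construction of $\psi$: it must simultaneously respect the oblique boundary condition on $T$ (whose $\pm$ structure is not additively preserved, forcing $\psi_n^-$ to be kept small), lie below $c_1$ on $Q_{1/8}(\tfrac12 e_n)$ at the initial time so that the Harnack bound controls it, and still deliver a lift of order $\lambda$ in $\phi$ at the final time large enough to absorb both the exponential decay loss $C_0 s_0\lambda$ and the prescribed increment $c_0\lambda$. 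The smallness $s_0\le c_0$ is exactly what makes the decay loss controllable by the barrier's contribution.
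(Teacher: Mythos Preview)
Your Stage 1 is essentially the paper's argument, with one minor slip: the dichotomy ``$\eta>0$ versus $\eta=0$'' is not quite enough, since on $\{\eta>0\}$ the value $\phi=\eta$ need not be bounded below by a positive universal constant (it tends to $0$ at the edge of $\operatorname{supp}\eta$). The paper fixes this by continuity: the Hopf inequality $K^{-1}\phi_n - K|\nabla_{x'}\phi|>0$ holds on a full neighborhood of $\{\phi=0\}\cap\{x_n=0\}$, and on the compact complement $\phi$ \emph{is} bounded below, so a large $C_0$ finishes. This is easily repaired.

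Stage 2, however, has a genuine gap: the competitor $\underline v+\psi$ with
\[
\psi(x,t)=\alpha\,(t-t_0-\lambda/2)\,\phi(x)+\beta\,\chi(x)
\]
is \emph{not} a subsolution for any admissible universal $\alpha,\beta>0$. Outside $\operatorname{supp}\chi$ (which is almost all of $\Omega$) one has $\underline v+\psi=\big(s(t)+\alpha(t-t_0-\lambda/2)\big)\phi$, whose time derivative is $(s'+\alpha)\phi$ while $\mathcal M_K^-$ of its Hessian vanishes because $\phi$ is Pucci-harmonic. The interior inequality therefore forces $\alpha\le -s'=C_0 s\le C_0 s_0\le C_0 c_0$. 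But your lift requirement (ii), read where $\chi=0$, forces $\alpha(\lambda/2)\ge(1+C_0)c_0\lambda$, i.e.\ $\alpha\ge 2(1+C_0)c_0$. These constraints are incompatible; the case $s_0=0$ already makes the obstruction transparent. The same failure appears on $T$: wherever $\phi_n<0$ the right-hand side of the oblique inequality is negative, while $(\underline v+\psi)_t=(s'+\alpha)\phi$ is positive once $\alpha>C_0 s$. So your claim that ``the interior subsolution check reduces to the analogous Stage~1 inequality'' is exactly where the argument breaks.

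The paper's remedy is structurally different. It works on the annulus $\Omega\setminus D$ and introduces a barrier $\psi$ built from the distance to $\partial\Omega$ (e.g.\ $\psi=d+Cd^2$) satisfying the key strict inequality $\mathcal M_K^-(D^2\psi)\ge 4>0$; this strict Pucci-subharmonicity is precisely the ``room'' that absorbs the positive time derivative in the comparison function $z=(s(t_3)+c_2(t-t_3))\phi+c_1\psi$. The Harnack information is used as Dirichlet data on $\partial D$ (propagated to all of $D$, not just a small cube), and on $T$ the large normal slope $\psi_n\ge 1$ dominates the small multiples $s_0\phi_n^-$ and $s_0|\nabla_{x'}\phi|$---this is where the smallness $s_0\le c_0$ is actually used. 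Your construction has no term with positive $\mathcal M_K^-$ outside a tiny set, so there is nothing to pay for the time growth needed to lift the coefficient of $\phi$.
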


\begin{proof} For the first part of the claim, since $v \geq 0$, it suffices to show that with our choice of $s$,
$$w(x,t):=s(t)\phi(x),$$ is a subsolution to \eqref{LP1} in $\Omega \times [t_0,0]$, that is
$$
\begin{cases}
\lambda w_t \le \mathcal M^-_K(D^2 w) &\text{in $\Omega \times (t_0,0]$,}\\
w_t \le  K^{-1} w_n^+-K w_n^- - K |\nabla_{x'}w|  &\text{on $\{x_n=0\} \cap (\Omega \times (t_0,0])$.}
\end{cases}
$$
The interior equation is immediately satisfied since $s' \leq 0$ and $s \geq 0.$ On $\{x_n=0\}$, we need to show that
$$  C \phi + K^{-1} \phi_n^+ -K \phi_n^- - K |\nabla_{x'}\phi| \ge 0,$$
for some large $C$.
By Hopf lemma $\phi_n >0$ on $\{\phi=0  \}\cap \{x_n=0\}$ and moreover $|\nabla_{x'} \phi|=0$, thus
$$K^{-1} \phi_n^+- K \phi_n^-- K |\nabla_{x'}\phi| = K^{-1} \phi_n > 0 \quad \mbox{on} \quad \{\phi=0  \}\cap \{x_n=0\}.$$ The same holds in a neighborhood of this set by continuity, and then we can choose $C$ sufficiently large so that the desired inequality holds.


For the second part, denote for simplicity
$$t_i:=t_0 + i \frac{\lambda}{4}, \quad i=1,\ldots,4.$$
 We define $$D:= \{x \in \Omega| \quad d(x, \p \Omega) > c\} \subset \Omega,$$
 with $c$ small universal such that there exists a $C^2$ function $\psi \ge 0$ defined in $\Omega \setminus D$ satisfying
 $${\mathcal M}^-_K (D^2\psi) \ge 4 \quad \mbox{in} \quad \Omega \setminus D,$$
 and
 $$ \psi=0, \quad |\nabla \psi|\ge 1 \quad \mbox{on $\p \Omega$,} \quad \psi \le 1 \quad \mbox{on $\p D$.}  $$
 An example of such a function is given by $\psi=d+ C d^2$ with $C$ sufficiently large, where $d$ is the distance function to $\p \Omega$.
 In view of \eqref{center}
 $$v\left(\frac 1 2 e_n, t_1\right) \geq 1/2.$$ Thus,
we can use Harnack inequality (after rescaling) to conclude that
\begin{equation}\label{v2c1}
v \geq 2c_1 \quad \text{on $D \times [t_2,t_4]$},
\end{equation}
for some small $c_1$.
We claim that at time $t=t_3$,
\begin{equation}\label{vxt3}
v(x,t_3) \ge s(t_3)\phi + c_1 \psi \quad \mbox{in} \quad \Omega \setminus D.
\end{equation}
 For this we compare $v$ in $(\Omega \setminus D) \times [t_2,t_3]$ with
$$q(x,t):=s(t_3)\phi + c_1\left (\psi + \frac{t-t_3}{t_3-t_2} \right )  .$$ The inequality $q \le v$ holds on the boundary of the domain. Indeed (recall that $s$ is decreasing), on $\p D$ 
$$q(x,t) \le s(t_3) \phi + c_1 \le s_0 + c_1 \le 2c_1\leq v,$$ 
where in the last inequality we used \eqref{v2c1},
and on $\p \Omega$ or at $t = t_2$ we have $q \le s(t_3) \phi \le v$.

It remains to check that $q$ is a subsolution for the interior equation. Indeed,
$$\lambda q_t = 4 c_1 \le c_1\, \mathcal M^-_K  (D^2 \psi) \le \mathcal M^-_K (D^2 q),$$
where we used that $ \mathcal M_K^-(N_1) + \mathcal M_K^-(N_2) \le \mathcal M_K^-(N_1+N_2)$, and claim \eqref{vxt3} is proved.

Next, in the domain $(\Omega \setminus D) \times [t_3,t_4]$ we compare $v$ with the subsolution 
$$z(x,t):= (s(t_3)+ c_2 (t-t_3))\phi(x) + c_1\psi(x),$$ with $c_2$ sufficiently small. 

The inequality $v \ge z$ is satisfied at time $t=t_3$ by \eqref{vxt3}, and on $\p D$ we have
$$z \le s_0 + c_2 + c_1 \le 2 c_1 \le v,$$
while on $\p \Omega \setminus \{x_n=0\}$ we have $z=0\le v$. 
We check that $z$ is a subsolution of our problem. For the interior inequality we have
$$\lambda z_t =c_2 \lambda \phi \le c_2 \le c_1\, \mathcal M^-_K  (D^2 \psi) \le \mathcal M^-_K  (D^2 z).$$
For the boundary condition, on $\{x_n=0\}$ we get
\begin{equation}\label{zt} z_t=c_2 \phi \le c_2 \le \frac {c_1}{4} K^{-1}  \psi_n,\end{equation}
where in the second inequality we have used that $\psi_n \ge 1$ on $\p \Omega \cap \{x_n=0\}$. Moreover, since $\phi_n \geq - C$ on $\p \Omega \cap \{x_n=0\}$, we get (for $s_0,c_2$ small enough),
$$z_n \ge -\left(s_0 + c_2 \frac{\lambda}{4}\right) C + c_1 \psi_n \geq \frac{c_1}{2} \psi_n ,$$ and finally ($|\nabla_{x'} \psi|=0$ on $\{x_n=0\}$)$$K |\nabla_{x'}z| \le \left(s_0+\frac{c_2}{4}\right) K |\nabla_{x'}\phi| \le \frac {c_1}{4} K^{-1}  \psi_n .$$
Together with \eqref{zt}, this gives $$ z_t=c_2 \phi \le c_2 \le K^{-1} z_n - K |\nabla_{x'}z| \quad \text{on $\{x_n=0\}.$}$$
In conclusion, at time $t=t_4$ we have $v\ge z$ in $\Omega \setminus D$ and $v \ge 2 c_1$ in $D$ which gives the desired claim by choosing $c_0$ sufficiently small.
\end{proof}

\begin{rem}\label{rem01}
In the proof above we only used the subsolution property for $v$ 
\begin{equation}\label{m+L}
\mathcal M^+_K (D^2v) \ge \lambda v_t,
\end{equation}
in order to extend the inequality \eqref{center} from one point to \eqref{v2c1} by applying the interior parabolic Harnack inequality. Alternately, it is sufficient to assume that the Harnack inequality  holds for $v$ only in a neighborhood of $D$ and not necessarily up to $\{x_n =0\}$. 

The rest of the proof is based on comparing $v$ with the explicit $C^2$ subsolutions $w$, $q$ and $z$ which all have bounded second derivatives in 
the $x$ variable. Thus the hypothesis that $v$ is a viscosity supersolution of \eqref{LP1} can be slightly relaxed, and require instead, that $v$ only 
satisfies the 
comparison principle with respect to the explicit barriers above.
\end{rem}

\begin{rem}\label{rem02}

The hypothesis \eqref{m+L} can be removed completely if instead of \eqref{center} we assume a measure estimate
$$\left|\left\{v \ge \frac 14\right\} \cap \left(Q_1 \times \left[t_0,t_0+ \frac \lambda 4\right]\right)\right| \ge \frac 12 \left|Q_1 \times \left[t_0,t_0+ \frac \lambda 4\right] \right|.$$
Then, the inequality \eqref{v2c1} follows directly from the supersolution property for $v$ and the weak Harnack inequality (see for example \cite{W}). 
\end{rem}

We are now ready to prove Proposition \ref{dim_osc}.

\

{\it Proof of Proposition $\ref{dim_osc}$.} Assume that $0 \le v \le 1$, and for half of the values of 
$$ t_k:= -1+ 
k\lambda, \quad \quad \mbox{so that} \quad t_k \in [-1, -1/2), \quad k=0,1,2,\dots,$$
we have
\begin{equation}\label{vfra}
v\left(\frac 12 e_n,t_k+ \lambda/4\right) \ge \frac 12.
\end{equation}
We apply Lemma \ref{phi} repeatedly to the sequence of times $t_k$ and obtain
$$v(x,t_k) \geq s_k \phi, \quad s_k:=s(t_k), \quad \quad s_0=0,$$
with $\phi$ given in Lemma \ref{phi}, and
$$s_{k+1} \ge s_k + c_0 \lambda \quad \quad \mbox{if \eqref{vfra} holds and $s_k \le c_0,$} $$
or
$$s_{k+1} \ge s_k (1-C_0 \lambda) \quad \mbox{otherwise.}$$
 Now it follows that $s_k \ge c_1$ for the last value of $k$ so that $t_k < - 1/2$, 
for $c_1$ appropriately  chosen depending on $c_0, C_0.$
Then we apply the first part of Lemma \ref{phi} to obtain
$$v(x,t) \geq \bar c \phi \quad \text{for all $t\geq -1/2$},$$
which gives the desired conclusion, since $\phi > c$ on $Q_{1/2}^+.$
\qed

\

The same arguments show that a similar statement to that of Proposition \ref{dim_osc} holds for a solution $w$ of \eqref{nlp} defined in \eqref{wdef}. Below is the key lemma which connects the linear and nonlinear problem and allows us to reduce our analysis mostly to the linear case.

\begin{lem}\label{dim_osc_w}
Assume that $u$ satisfies the hypotheses of Proposition $\ref{P!}$ and let $w$ be defined as in \eqref{wdef}, with $-1 \leq w \leq 1$. Then 
$$ osc_{\, \, \mathcal C_{1/2}}w \le 2(1-c),$$
with $c$ universal, provided that $\delta \le c'$ and $\eps \le \eps_1(\delta)$.
\end{lem}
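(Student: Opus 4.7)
The strategy is to mimic the proof of Proposition \ref{dim_osc}, now with $w$ in place of a linear solution, exploiting the flexibility recorded in Remarks \ref{rem01} and \ref{rem02}. Those remarks isolate the only two inputs needed for the lower-barrier argument of Lemma \ref{phi}: (a) a comparison principle against explicit smooth barriers with bounded second $x$-derivatives, and (b) an interior Harnack inequality at a universal scale, used to promote a one-point lower bound of the type \eqref{center} to the interior bound \eqref{v2c1}. Both inputs are available for $w$: (a) via Proposition \ref{CP}, which certifies that any $C^2$ subsolution of the linearized problem \eqref{LP0} with a $C\delta$ interior slack and a $\delta$ boundary slack is automatically a subsolution of \eqref{nlp}; and (b) via the Harnack inequality for $w$ displayed in \eqref{ihiw}, valid at any universal scale $r$ provided the oscillation transported is at least $C\delta r^2$, which is automatic for universal $r$, $\mu$ of order one, and $\delta$ small.

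After possibly replacing $w$ by $-w$, we may assume that at least half of the times $t_k := -1 + k\lambda$ in $[-1,-1/2)$ satisfy $w(\frac{1}{2} e_n, t_k + \lambda/4) \ge 0$. Apply the argument to $\tilde w := (w+1)/2 \in [0,1]$, putting us in the exact setting of Proposition \ref{dim_osc}. The three explicit barriers $s(t)\phi(x)$, $q(x,t)$, and $z(x,t)$ built in Lemma \ref{phi} are $C^2$ in $x$ with bounded second derivatives and are strict subsolutions of the underlying linear problem; I would replace each such $B$ by
$$\widetilde{B}(x,t) := B(x,t) + \eta \delta (x_n^2 - t - 2),$$
with $\eta$ large and universal. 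Since $x_n^2 - t - 2 \le 0$ on $\mathcal{C}_1$, its $t$-derivative equals $-1$, the second $x$-derivative $\partial_{nn}(x_n^2-t-2)=2$ and all other second $x$-derivatives vanish, this correction injects a positive universal amount of size $2\eta\delta (A_\lambda)_{nn} + \eta\delta\lambda \ge 2\eta\delta K^{-1}$ into the interior subsolution slack, and an additive $\eta\delta$ into the boundary slack. A sufficiently large universal $\eta$ therefore produces, uniformly in $\lambda \in (0,1]$, exactly the $C\delta$, $\delta$ strictness demanded by Proposition \ref{CP}. On the Dirichlet boundary of the comparison domain, $\widetilde{B} \le \tilde w$ follows automatically because $B \le \tilde w$ there (as in the proof of Lemma \ref{phi}, using $\tilde w \ge 0$) and the correction is nonpositive.

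With the comparison principle in hand, the iteration proceeds as in Proposition \ref{dim_osc}. At each "good" time $t_k$, Harnack \eqref{ihiw} applied at a universal scale around $\frac{1}{2} e_n$ upgrades $\tilde w(\frac{1}{2} e_n, t_k + \lambda/4) \ge 1/2$ to $\tilde w \ge 2 c_1$ on a universal interior region, permitting the use of the modified $q$- and $z$-type barriers. One thereby obtains inductively
$$\tilde w(\cdot, t_k) \ge s_k \phi - C\delta,$$
with $s_{k+1} \ge s_k + c_0 \lambda$ at good times and $s_{k+1} \ge s_k(1 - C_0 \lambda)$ otherwise. Summing over the $\sim \lambda^{-1}$ steps in $[-1, -1/2)$ yields $s_k \ge c_1$ universal at the last $k$, and the first (decay-only) part of the modified Lemma \ref{phi} propagates this to all $t \in [-1/2, 0]$, giving $\tilde w \ge \bar c \phi - C\delta \ge c$ on $\mathcal{C}_{1/2}$, provided $\delta \le c'$ is small enough and $\eps \le \eps_1(\delta)$. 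Unwinding $\tilde w = (w+1)/2$ gives $\mathrm{osc}_{\mathcal{C}_{1/2}} w \le 2(1 - c)$.

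The main obstacle I foresee is entirely bookkeeping: verifying that each of the three modified barriers $\widetilde{B}$ satisfies the required interior and oblique boundary inequalities with the exact $C\delta$ and $\delta$ slacks demanded by Proposition \ref{CP}, uniformly in $\lambda$. This is a careful but routine check relying on the explicit structure of $\phi$, $\psi$, $q$, $z$ (notably the lower bound $\psi_n \ge 1$ on $\partial\Omega \cap \{x_n=0\}$ and the Hopf estimate $\phi_n > 0$ on $\{\phi = 0\} \cap \{x_n=0\}$), the smallness hypothesis $\lambda \le \delta \eps$, and the choice of $\eps_1(\delta)$ small enough for Proposition \ref{CP} to apply with the universal gradient and Hessian bound $M$ on the barriers.
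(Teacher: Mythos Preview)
Your proposal is correct and follows essentially the same approach as the paper. The only cosmetic difference is where the quadratic correction is placed: the paper absorbs it into the solution once and for all by setting $\bar w := w + 1 + C\delta(2 + t - x_n^2)$ and then runs the unmodified barriers of Lemma~\ref{phi} against $\bar w$, whereas you keep $\tilde w = (w+1)/2$ and shift each barrier by $\eta\delta(x_n^2 - t - 2)$. Since the two are related by moving the same term across the inequality, the arguments are equivalent; the paper's packaging is slightly cleaner because the iteration $\bar w(\cdot,t_k)\ge s_k\phi$ carries no running $-C\delta$ error (your formulation is fine too once one observes that all your modified comparisons are exactly the original comparisons for $\tilde w + \eta\delta(2+t-x_n^2)$, so no error actually compounds).
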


\begin{proof}
We may assume as above that $w(e_n/2,t_k+ \lambda/4)\ge 0$ for more than half the values of $k$, and then show that $w$ separates from the lower 
constraint $-1$. For this we apply the same argument as above 
to the function
$$ \bar w:=w + 1 + C \delta (2 + t -x_n^2) \ge 0,$$
for which the relaxed hypotheses of Remark \ref{rem01} hold. Indeed, by \eqref{ihiw}, $\bar w$ satisfies the required Harnack inequality \eqref{center} $\Longrightarrow$ \eqref{v2c1} and, by Proposition \ref{CP}, it satisfies the comparison with the explicit barriers of Lemma \ref{phi}.  

We remark that we have only used that $u$ has property $H(c'')$ in $\mathcal C_\lambda$ for some $c''$ small, universal.
\end{proof}

\smallskip

Before we proceed with the proofs of Theorem \ref{H} and Proposition \ref{Hanw} we provide a boundary version of the diminishing of oscillation Proposition \ref{dim_osc}.

\begin{lem}\label{phiB}
Assume that $U$ is a space-time domain obtained by the intersection of $n+1$ half spaces in the $x_1$, $\ldots,$ $x_{n-1}$, $x_n$ and $t$ variables, 
$$U:=(-\infty,z_1) \times (-\infty, z_2) \times \dots \times  (-\infty, z_n) \times (-z_{n+1},\infty) \quad \subset \R^{n+1},$$ 
with $z_i \in [0,1]$.

Assume that $v \geq 0$ satisfies
\begin{equation}\label{LP2}\begin{cases}
\lambda v_t \ge \mathcal M^-_K(D^2 v)&\text{in $\mathcal C_{1}\cap U$,}\\
v_t \ge  K^{-1} v_n^+-K v_n^- - K |\nabla_{x'}v|& \text{on $\mathcal F_{1} \cap U$,}\\
v \ge \frac 14&\mbox{on $\p U \cap \mathcal C_1.$}
\end{cases}
\end{equation}
If $\min z_i \le \frac 7 8,$ then 
$$ v \ge c \quad \mbox{in $ \mathcal C_{1/2} \cap U$,} \quad \quad \mbox{$c$ universal}.$$
\end{lem}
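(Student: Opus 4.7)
The strategy is to construct an explicit subsolution $\underline{v}$ of the relaxed system~\eqref{LP2} on $\mathcal{C}_1\cap U$ such that $\underline{v}\le v$ on the parabolic boundary of $\mathcal{C}_1\cap U$ and $\underline{v}\ge c$ on $\mathcal{C}_{1/2}\cap U$ for a universal $c>0$. The conclusion then follows from the comparison principle (after a routine perturbation of $\underline v$ to make the relevant inequalities strict). The form of the barrier depends on which face of $\partial U$ realizes the hypothesis $\min z_i\le 7/8$, so we split into two cases.

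\textbf{Case A (spatial face, $j\le n$).} Fix $j$ with $z_j\le 7/8$. Mimicking the construction of $\phi$ in Lemma~\ref{phi}, pick a smooth auxiliary domain $\tilde\Omega$ with $Q^+_{3/4}\cap\{x_j<z_j\}\subset\tilde\Omega\subset Q^+_{7/8}\cap\{x_j<z_j\}$, and let $\phi$ solve $\mathcal{M}^-_K(D^2\phi)=0$ in $\tilde\Omega$, with boundary data $\phi=\eta$ on $\{x_j=z_j\}\cap\partial\tilde\Omega$ (where $\eta$ is a fixed bump supported away from other faces, $0\le\eta\le 1$) and $\phi=0$ on the remainder of $\partial\tilde\Omega$, in particular on $\{x_n=0\}\cap\partial\tilde\Omega$. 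The maximum principle gives $0\le\phi\le 1$, the Pucci interior Harnack inequality gives $\phi\ge c$ on $\mathcal{C}_{1/2}\cap\tilde\Omega$, and the Hopf lemma gives $\phi_n>0$ on $\{x_n=0\}\cap\{\phi=0\}$. Set $\underline{v}(x,t):=\tfrac14\phi(x)$. The interior subsolution inequality is immediate ($\underline v_t=0$, $\mathcal{M}^-_K(D^2\underline v)\ge 0$), and on $\mathcal{F}_1\cap U$ it reduces to $0\le K^{-1}\phi_n$ since $\phi\equiv 0$ on $\{x_n=0\}$ forces $\underline v_t=0$ and $|\nabla_{x'}\underline v|=0$ there. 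On the parabolic boundary of $\mathcal{C}_1\cap U$, $\underline v\le 1/4\le v$ on $\partial U\cap\mathcal{C}_1$ and $\underline v=0\le v$ on the rest.

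\textbf{Case B (temporal face, $j=n+1$).} When $z_{n+1}\le 7/8$, the good face is the initial slice $\{t=-z_{n+1}\}\cap\mathcal{C}_1$. Construct a spatial $\phi$ as in Case A but on a smooth domain $\tilde\Omega$ with $Q^+_{3/4}\subset\tilde\Omega\subset Q^+_{7/8}$, chosen so that $\phi\ge c$ on $\mathcal{C}_{1/2}\cap\tilde\Omega$ and $\phi=0$ on $\partial\tilde\Omega$ away from a bump region. Take
$$\underline{v}(x,t):=\tfrac14\,\phi(x)\,e^{-C_0(t+z_{n+1})},$$
for a universal constant $C_0$. The decisive observation is that $t+z_{n+1}\in(0,7/8]$ throughout $\mathcal{C}_1\cap U$, so the exponential factor is bounded below \emph{independently of $\lambda$}. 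The interior subsolution inequality reduces to $\lambda(-C_0)\underline v\le e^{-C_0(t+z_{n+1})}\mathcal{M}^-_K(D^2(\phi/4))$, with the left side nonpositive and the right side nonnegative; the boundary inequality on $\mathcal{F}_1\cap U$ is handled as in Case A via $\phi=0$ and Hopf. On the parabolic boundary of $\mathcal{C}_1\cap U$, $\underline v\le 1/4\le v$ on the good face and $\underline v=0\le v$ on the spatial bad boundary.

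\textbf{Main obstacle.} The subtlest point, just as in Lemma~\ref{phi}, is verifying the subsolution property at the internal oblique boundary $\mathcal{F}_1\cap U$, where $v$ satisfies only the relaxed one-sided inequality. The design choice that makes this work is forcing $\phi$ to vanish identically on $\{x_n=0\}\cap\tilde\Omega$: this automatically kills $\underline v_t$ and $|\nabla_{x'}\underline v|$ along $\mathcal{F}_1\cap U$, and reduces the inequality to $0\le K^{-1}\underline v_n$, which is guaranteed by the Hopf lemma applied to the Pucci equation. A secondary bookkeeping issue, resolved by a standard rescaling or a unified choice of $\tilde\Omega$ depending smoothly on the $z_i$'s, is to ensure that the universal lower bound for $\phi$ on the required set is uniform as the parameters $z_i$ vary in the admissible range.
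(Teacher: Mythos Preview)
There is a genuine gap in your barrier construction, and it is the same issue in both Cases A and B. You design $\phi$ to vanish identically on $\{x_n=0\}\cap\partial\tilde\Omega$ precisely so that the oblique boundary inequality collapses to $0\le K^{-1}\phi_n$. But this choice is incompatible with your claimed interior lower bound: if $\phi=0$ on $\{x_n=0\}$, then by continuity $\phi(x)\to 0$ as $x_n\to 0^+$, so $\phi$ (and hence $\underline v$) cannot be bounded below by a positive universal constant on $Q_{1/2}^+\cap\tilde\Omega$, which contains points with $x_n$ arbitrarily small. The interior Harnack inequality for the Pucci equation only controls $\phi$ from below on sets that stay a fixed distance from $\partial\tilde\Omega$; it says nothing up to the face $\{x_n=0\}$. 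Since the conclusion of the lemma is a lower bound on $\mathcal C_{1/2}\cap U$ (which reaches all the way to $x_n=0^+$), your barrier is simply too small there.

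The paper resolves exactly this tension in Lemma~\ref{phi} by putting the bump $\eta$ \emph{on} $\{x_n=0\}$, so that $\phi\ge c$ genuinely holds on all of $Q_{1/2}^+$. The price is that the oblique boundary check is no longer trivial: one needs the exponential time decay $s(t)=s_0 e^{-C_0(t-t_0)}$ and verifies
\[
C_0\,\phi + K^{-1}\phi_n^+ - K\phi_n^- - K|\nabla_{x'}\phi| \ge 0
\]
by Hopf where $\phi=0$ and by taking $C_0$ large where $\phi>0$. Your Case~B, had you used this $\phi$, would reduce to the first part of Lemma~\ref{phi}; but your text explicitly says ``handled as in Case~A via $\phi=0$ and Hopf'', which is the wrong mechanism. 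Case~A is more serious: a time-independent barrier cannot work at all when the only information at $t=-1$ is $v\ge 0$, and even with exponential decay one would have to start from $s_0=0$. The paper handles the purely spatial case ($z_{n+1}=1$, some $z_j\le 7/8$) differently: it truncates to $\tilde v=\min\{v,\tfrac14\}$, extends by $\tfrac14$ outside $U$, observes that $|\{\tilde v\ge\tfrac14\}\cap Q_1|\ge c|Q_1|$ at \emph{every} time, and then invokes the increment mechanism of Lemma~\ref{phi} (second part) via the weak Harnack inequality, iterated over $\sim\lambda^{-1}$ time steps. Your one-shot stationary barrier does not capture this.
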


\begin{proof}
This follows easily from Lemma \ref{phi}. Indeed, we work with the truncation $\tilde v:=\min \{v,\frac 14\}$ extended by $\frac 14$ in $\mathcal C_1 \setminus U$. Then $\tilde v$ is a supersolution for our problem in $\mathcal C_1$.

If $z_{n+1} < 1$, then we can apply directly the first part of Lemma \ref{phi} for $\tilde v$ for some $t_0$ close to $-1$ and for $s_0$ universal, and obtain the desired conclusion. 

On the other hand, if $z_{n+1}=1$, then $z_i \le \frac 78$ for some $i \le n$ hence for each time $t \in [-1,0]$ we find
$$\left|\left\{\tilde v \ge \frac 14 \right\} \cap Q_1\right| \ge c |Q_1|.$$
 Now the conclusion follows as before, see Remark \ref{rem01}.
\end{proof}

We are now ready to prove Theorem \ref{H}.

\begin{proof}[Proof of Theorem $\ref{H}$]
Notice that the rescaling of $v$ 
$$v_r(x,t) =v(r \, x, r \, t), \quad \quad r \le 1,$$
satisfies again the hypotheses of Theorem \ref{H} in $\mathcal C_1$ with the constant $\lambda$ replaced by $\lambda_r=\lambda r$.  Proposition \ref{dim_osc} 
applied to $v_r$ implies that 
$$ osc _{\, \, \mathcal C_{1/2}} \, v_r \le (1-c) osc _{\, \, \mathcal C_1} \, v_r$$
which gives (recall that $\mathcal B_{\lambda,r}(y,s)=\mathcal B_{r}(y,s)$ if $y_n=0$),
$$ osc _{\, \, \mathcal B_{r/2}(0,0)}\, v \le (1-c) osc _{\, \, \mathcal B_{r}(0,0)} \, v.$$
Similarly, if $(y,s) \in \overline{\mathcal C}_{1/2} \cap \{x_n=0\}$, then by considering cylinders centered at $(y,s)$ we obtain
\begin{equation}\label{brs}
osc _{\, \, \mathcal B_{r/2}(y,s)} \, v \le (1-c) osc _{\, \, \mathcal B_r(y,s)} \, v, \quad \quad \forall r \le 1/2,
\end{equation} 
which proves the desired oscillation decay on $\{x_n=0\} \cap \overline{\mathcal  C}_{1/2}$. 

If $(y,s) \in \mathcal C_{1/2},$ then \eqref{brs} applied at $((y',0),s)$ implies
$$osc _{\, \, \mathcal B_{\lambda,r/8}(y,s)} \, v \le (1-c) osc _{\, \, \mathcal B_{\lambda,r}(y,s)} \, v, \quad \quad \mbox{if} \quad y_n \le r \le 1/4.$$
In the case when $r < y_n,$ then the inequality above follows from the standard parabolic Harnack inequality 
applied to $v$ in the interior cylinder $\mathcal B_{\lambda,r}(y,s)$.

\smallskip

The boundary version follows in the same way. Precisely, if  $(y,s) \in \overline{\mathcal C}_1 \cap \{x_n=0\}$ then we find
$$osc _{\, \, \mathcal B_{r/2}(y,s) \cap \overline{\mathcal C}_1} \, v \le (1-c) osc _{\, \, \mathcal B_{r}(y,s)\cap \overline{\mathcal C}_1} \, v, \quad \quad \forall r \le 1,$$
 by applying either Proposition \ref{dim_osc} or Lemma \ref{phiB} depending whether or not $\mathcal B_{\lambda,r}(y,s)$ intersects the boundary $\p_D \mathcal C_1$. 

The inequality above can be deduced at all points $(y,s) \in \overline {\mathcal C}_1$ after replacing $r/2$ by $r/8$ on the left hand side. 
Indeed, if $r \ge y_n$ then it follows from the inequality above applied at the point $((y',0),s)$, and if $r< y_n$ 
then we can apply the standard parabolic Harnack inequality or its boundary version since $\mathcal B_{\lambda,r}(y,s)$ does not intersect $\{x_n=0\}$. 
\end{proof}

We conclude the section with the proof of Proposition \ref{Hanw}, that is the Harnack inequality for $w$.

\begin{proof}[Proof of Proposition $\ref{Hanw}$] By Lemma \ref{dim_osc_w} we find that, in terms of $u$, we satisfy again the hypotheses of Proposition \ref{P!} in $\mathcal C_{\lambda/2}$ with $\lambda$ replaced by $
\lambda/2$, $\eps$ replaced by $2(1-c)\eps$, and with $\delta$ the same. The function $a$ stays the same while $b$ is modified by a small constant. Moreover, the property 
$H(\eps^{1/2})$ of $u$ in $\mathcal C_\lambda$ implies that $u$ satisfies property $H(2 \eps^{1/2})$ in $\mathcal C_{\lambda/2}$. We can iterate this result $k$ times as long as the scale parameter of the property $H(2^k \eps^{1/2})$ remains small, universal, and the hypotheses of Lemma \ref{dim_osc_w} hold:
$$2^k \eps^{1/2} \le c'', \quad \quad \delta \le c', \quad \quad 2^k (1-c)^k \eps \le \eps_1(\delta),$$ with $c''$ small, universal. This means that we can iterate $k$ times if
$$ 2^k \eps^{1/2} \le \eps_2(\delta), \quad \quad \delta \le c'.$$
In terms of $w$, we obtain that its oscillation in $\mathcal C_{2^{-k}}$ is bounded by $2 (1-c)^k$ as long as $k$ satisfies the inequality above. On the other hand for the interior balls $\mathcal B_{\lambda,r}$, by \eqref{ihiw}, $w$ satisfies a similar diminishing of oscillation up to scale $r \sim \eps^{1/2}$, and the conclusion follows. 
\end{proof}  

\section{Proof of Proposition \ref{estimate}}

In this section, we prove Proposition \ref{estimate} by using Theorem \ref{H} and the estimates for the one-dimensional problem  which will be proved in Lemma \ref{2D} of the next section. The constants $C$ in this proof depend on $n$ and $K$. 

\

\noindent{\it Proof of Proposition $\ref{estimate}$.} The proof is divided in four steps.

\medskip

{\bf Step 1 - Interior Estimates}. Let $(y,s) \in \mathcal C_{1/2}$. From Theorem \ref{H} we know that $$ osc_{\, \, \mathcal B_r(y,s)} v \le C r ^\alpha, \quad \quad r= y_n.$$
The rescaling
$$ \tilde v (x,t):= v(y+ rx, s + r ^2 \lambda t ),$$
solves in $Q_1 \times (-1,0)$
$$ \tilde v_t = tr (\tilde A(t) D^2 \tilde v), \quad \quad \tilde A(t):=A(s+ r^2 \lambda t).$$
Since $|A'| \leq \lambda^{-1}$, we have $|\tilde A'(t)| \le C,$ and  we find by interior estimates that $|\tilde v_n(0,0)| \le C \;osc_{Q_1\times (-1,0)} \; \tilde v$, from which we deduce
$$|v_n(y,s)| \le C r^{\alpha -1} = C y_n^{\alpha -1}.$$
On the other hand, we prove in appendix that the difference of two viscosity solutions is still a viscosity solution. 
Thus, the estimates for $v$ can be extended to the derivatives of $v$ in the $x_i$ directions, $i=1,\ldots,n-1$. 
Indeed, by applying the interior H\"older estimates to discrete differences in the $x_i$ directions, and iterating this we find that 
$$\|D^k_{x'} v\| \leq C(k) \quad \mbox{in $\mathcal C_{1/2}$}, \quad \forall k \ge 1.$$ 
In particular, using also the estimate for $v_n$ above, we obtain
$$ \|D^2_{x'} v\| \le C, \quad \quad |v_{in}| \leq C x_n^{\alpha-1} \quad \mbox{in $\mathcal C_{1/2}$.}$$ 

\

{\bf Step 2 - Reduction to 1D.} Combining the interior estimates with our assumptions on $\gamma,$ we obtain that when we restrict $v$ to a 
two-dimensional space in which we freeze the $x'$ variable, say for simplicity $x'=0$, then the 
function $v((0,x_n),t)$ solves in the $x_n,t$ variables the equation 
\begin{equation}\label{LPnew}\begin{cases}
v_t = \frac 1 \lambda \{a^{nn}(t) v_{nn} + h(x_n,t)\}&\text{in $\mathcal C_{1},$}\\
v_t =  \gamma_n(t)v_n + f(t) &\text{on $\mathcal F_{1}$,}
\end{cases}
\end{equation}
with
$$|h| \leq C x_n^{\alpha -1}, \quad |f(t)|\leq C,$$
$$h(x_n,t):=\sum_{(i,j)\ne (n,n)}a^{ij}(t) \, \, v_{ij}((0,x_n),t), \quad \quad f(t):= \sum_{i<n}\gamma _i(t) \, \, v_i(0,t).$$
The boundary condition on $\mathcal F_1$ is understood in the viscosity sense. 

Indeed, if a $C^1$ function $\varphi(x_n,t)$ touches $v(0,x_n,t)$ by above/below, say at $(0,0),$ in $\mathcal B_r(0,0) \subset \R^2$, then  
$$\varphi(x_n,t) + \sum_{i<n} v_i(0,0) x_i \pm C|(x,t)|^{1+\alpha}$$
touches $v$ by above/below at the origin in $\mathcal B_r(0,0) \subset \R^{n+1}$. This follows from the $C^\alpha$ continuity of $v_i$, $i<n$, which implies
\begin{equation}\label{v0xnt}
\left| v(x,t) - \left(v(0,x_n,t) + \sum_{i<n} v_i(0,0) x_i \right) \right| \le C |(x,t)|^{1+\alpha}.
\end{equation}

Now, we can use Lemma \ref{2D} a) for $v(0,x_n,t)$, where we establish $C^{1,\alpha}$ estimates for the 1D problem \eqref{LPnew}. We obtain
$$|v((0,x_n),t) -v(0,t) - v_n(0,t)x_n| \le C x_n^{1+\alpha},$$
which together with \eqref{v0xnt} gives
$$\left|v-\left(v(0,t)+ v_n(0,t)x_n + \sum_{i=1}^{n-1}v_i(0,0)x_i\right) \right| \leq C \rho^{1+\alpha} \quad \text{in $\mathcal C_{\rho}.$}$$
This means that 
$$|v-l_{a,b}|  \leq C \rho^{1+\alpha} \quad \text{in $\mathcal C_{\rho},$}$$
with
$$a(t):= (v_1(0,0), \ldots, v_{n-1}(0,0),v_n(0,t)), \quad  b(t) := v(0,t),$$
and
$$b'=\gamma_n(t) a_n +f(t)=\gamma(t) \cdot a + \sum_{i<n}\gamma_i(t)(v_i(0,t)-v_i(0,0)).$$
\

{\bf Step 3 - Modifying the linear approximation.} Next, we modify $a$ and $b$ slightly into $\bar a$, $\bar b$ so that 
$$|v-l_{\bar a, \bar b}|  \leq C \rho^{1+\alpha} \quad \text{in $\mathcal C_{\rho},$}$$
and we also satisfy
\begin{equation}\label{modab}
|\bar a'(t)| \le C \lambda^{-1} \rho^{\alpha-2}, \quad \quad \bar b'=\gamma(t) \cdot \bar a.
\end{equation}
By Lemma \ref{2D} we know that
\begin{equation}\label{ant}
 |a_n(t)-a_n(s)| \le C \lambda^{-\frac \alpha 2}|t-s|^\frac{\alpha}{2},
 \end{equation} and by the H\"older continuity of the $v_i$'s,
\begin{equation}\label{b-g}
|b'-\gamma (t)\cdot a| \le \sum_{i<n}|\gamma_i| |v_i(0,t)-v_i(0,0)| \le C |t|^\alpha.
\end{equation}
Thus, $a_n$ oscillates $C\rho^\alpha$ in an interval of length $\lambda \rho^2$. We define $\bar a$ by averaging $a$ over intervals of this length.
More precisely, let $\eta$ be a standard mollifier in $\R$ with compact support in $[-1,1]$, and $\eta_{\tau}$ denote its rescaling with support of size 
$\tau$. We extend $a_n(t)$ to be constant for $t \ge 0$ and define
$$\bar a_n := a_n * \eta_{\lambda \rho^2},  \quad \quad \bar a_i :=  a_i, \quad i=1,\ldots,n-1.$$
Then \eqref{ant} implies the inequality \eqref{modab} for $\bar a'$ and also
\begin{equation}\label{a-b}
|a-\bar a| \le C \rho^\alpha.
\end{equation}
We define $\bar b(t)$ for $t \le 0$ as
$$\bar b'=\gamma (t) \cdot \bar a, \quad \quad \bar b(0)=b(0).$$
Then, \eqref{b-g}, \eqref{a-b} imply
$$|(\bar b - b)'| \le C \rho^\alpha \quad \Longrightarrow \quad |\bar b - b| \le C \rho^{1+\alpha} \quad \mbox{in $[-\rho,0]$,}$$
and the desired conclusion follows.

\medskip

{\bf Step 4 - Conclusion.} The tangential derivatives $v_i,$ with $i<n$, satisfy the same estimates as $v$. We find from Step 2 applied to $v_i$ that the mixed 
derivatives $v_{in}$ 
must be bounded by a universal bound. This improves the initial estimate in Step 1, which in turn improves the regularity of $f$ and $h$ in Step 2. 
More precisely, by Lemma \ref{2D} we find that $v_{in}$ satisfies the estimate \eqref{w_x}. This holds also for the tangential derivatives of order up to 2. 
Then the functions $h(x,t)$ and $f(t)$ in \eqref{LPnew} satisfy the hypotheses of part b) of Lemma \ref{2D}. This gives that the remaining second 
derivative $v_{nn}$ is 
bounded as well, and \eqref{ant} holds for $\alpha+1$ instead of $\alpha$. Thus we can replace $\alpha$ by $\alpha +1$ in the bound \eqref{modab} 
above, and the proposition is proved.
\qed

\section{Estimates for the 1D case}

In this section, we provide the necessary estimates for solutions to the 1D linear problem. 
The difference with the higher dimensional case is that now, in the 1D case, the H\"older estimates and the subsequent $C^{1,\alpha}$ and $C^{2,\alpha}$ estimates can be iterated 
in parabolic 
cylinders $$\mathcal P_\rho:=(0,\rho) \times (-\rho^2,0],$$
and we can use the standard H\"older parabolic norms with respect to the standard parabolic distance: $d((x,t), (y, s)):= |x-y| + |t-s|^{1/2}$. Following Krylov \cite{K}, we denote the corresponding H\"older spaces with respect to this distance with $C_{x,t}^{k,\alpha}.$

Precisely, we prove the following.

\begin{lem}[1D-Estimates]\label{2D} Assume that $\lambda \le 1$ and $w(x,t)$ is a viscosity solution in $\mathcal C_1 \subset \R^2$ of the equation
\begin{equation}\label{LPnew2D}\begin{cases}
w_t = \frac 1 \lambda \{A(t) w_{xx} + h(x,t)\}&\text{in $\mathcal C_{1}$,}\\
w_t =  \gamma(t) \, w_x + f(t) &\text{on $\mathcal F_{1}$,}
\end{cases}
\end{equation}
with
$$ \|w\|_{L^\infty} \le 1, \quad  K^{-1} \le A(t), \, \gamma(t) \le K, \quad \quad |A'(t)| \le K \lambda^{-1}.$$

a) If 
$$|h| \leq K x^{\alpha -1}, \quad |f(t)|\leq K,$$
then $w \in C^{1,\alpha}$ in the $x$ variable, $w \in C^1$ on $\{x=0\}$, and the free boundary condition is satisfied in the classical sense. 
More precisely, in $\mathcal C_{1/2}$ we have
$$ |w(x,t)- (w(0,t) + x w_x(0,t))| \le C x^{1+\alpha}, \quad \quad \quad |w_x|\le C, $$
and
$$|w(y,t)-w(z,s)| \le C\left(|y-z|^\alpha + \lambda^{-\frac \alpha 2}|t-s|^\frac \alpha 2\right),$$
\begin{equation}\label{w_x}
 |w_x(y,t)-w_x(z,s)| \le C\left(|y-z|^\alpha + \lambda^{-\frac \alpha 2}|t-s|^\frac \alpha 2\right),
\end{equation}
with $C$ depending only on $K$ and $\alpha$.

\

b) If in addition in $\mathcal C_{3/4}$
$$ |h(y,t)-h(z,s)| \le K\left(|y-z|^\alpha + \lambda^{-\frac \alpha 2}|t-s|^\frac \alpha 2\right),$$
$$ |\gamma(t)-\gamma(s)| \le K\lambda^{-\frac \alpha 2}|t-s|^\frac \alpha 2, \quad \quad |f(t)-f(s)| \le K\lambda^{-\frac \alpha 2}|t-s|^\frac \alpha 2,$$
then in $\mathcal C_{1/2}$
\begin{equation}\label{w_x2}
|w_x(0,t)-w_x(0,s)| \le C \lambda^{-\frac {1+\alpha}{ 2}}|t-s|^{\frac{1+ \alpha}{ 2}}, \quad \quad |w_{xx}| \le C.
\end{equation}
\end{lem}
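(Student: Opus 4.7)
The plan mirrors Proposition~\ref{estimate}, specialized to one spatial dimension, where the iteration can be carried out in standard parabolic cylinders once the $\lambda$-dependent rescaling is tracked carefully. First I would reduce to a cleaner problem by subtracting $\int_0^t f(s)\,ds$ (absorbing the boundary forcing into a bounded modification of $h$) and an explicit function handling the integrable-at-zero singularity $|h|\le Kx^{\alpha-1}$, so that effectively $f\equiv 0$ and $h$ is bounded. Then Theorem~\ref{H}, applied to $w$ (whose Pucci inequalities accommodate the bounded forcing via universal barriers), yields $\|w\|_{C^\alpha_{d_\lambda}(\mathcal{C}_{3/4})}\le C$, which gives the first Hölder inequality in~(a).

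Next, at each boundary point $(0,t_0)\in\overline{\mathcal{F}_{1/2}}$, I would iteratively construct affine approximations of the form
\[ \ell_k(x,t)=A_k+B_k x+\bigl(\gamma(t_0)B_k+f(t_0)\bigr)(t-t_0), \]
which satisfy the boundary condition at $t=t_0$, such that $|w-\ell_k|\le C\rho_k^{1+\alpha}$ in $\mathcal{B}_{\rho_k}(0,t_0)$ with $\rho_k=\tau^k$ for a small universal $\tau$. The improvement step rescales
\[ \tilde w(y,\sigma):=\rho_k^{-(1+\alpha)}\bigl[w(\rho_k y,\,t_0+\rho_k\sigma)-\ell_k(\rho_k y,\,t_0+\rho_k\sigma)\bigr], \]
which solves an equation of the same form with effective parameter $\tilde\lambda=\lambda\rho_k$, bounded rescaled forcing, and $\|\tilde w\|_{L^\infty}\le C$. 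Applying the $C^\alpha$ estimate to $\tilde w$ and reading off its value and $x$-slope at the origin defines $\ell_{k+1}$; geometric decay follows by choosing $\tau$ small.

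Standard interior parabolic $C^{1,\alpha}$ estimates applied to $w$ in cylinders $(y/2,3y/2)\times(s-\lambda(y/2)^2,s)\subset\{x>0\}$, after the rescaling $t\mapsto\lambda t$, give the interior control on $w_x$ and $w_{xx}$; combined with the boundary expansion this yields $|w_x|\le C$ and the Hölder estimate~\eqref{w_x}, while continuity of $B_k$ in $t_0$ delivers the time-Hölder estimate for $w_x(0,\cdot)$. For part~(b), the additional Hölder regularity of $h,\gamma,f$ allows iterating one order further with quadratic-in-$x$ approximations
\[ \ell_k(x,t)=A_k+B_k x+\tfrac12 D_k x^2+C_k(t-t_0), \]
the coefficients being linked by the interior and boundary equations at $(0,t_0)$; the error decays like $\rho^{2+\alpha}$, yielding $|w_{xx}|\le C$ and the sharper time-Hölder bound on $w_x(0,\cdot)$.

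The main obstacle is the mixed parabolic/hyperbolic nature of the rescaling at a boundary point: the hyperbolic scaling $(x,t)\mapsto(\rho x,\rho t)$ is adapted to the boundary condition but transforms the interior equation so that the effective diffusion $\lambda^{-1}$ becomes $(\lambda\rho)^{-1}$, i.e., $\lambda$ shrinks at every step of the iteration. The scheme only works because Theorem~\ref{H} provides Hölder bounds that are uniform in $\lambda$; verifying that the rescaled data and the compatibility of $\ell_k$ with the equation remain in the scope of Theorem~\ref{H} at every scale, and that the weighted exponents $\lambda^{-\alpha/2}|t-s|^{\alpha/2}$ and $\lambda^{-(1+\alpha)/2}|t-s|^{(1+\alpha)/2}$ emerge correctly from the iteration, is the principal technical burden.
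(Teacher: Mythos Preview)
Your scheme differs from the paper's in a way that creates a genuine gap at the improvement step. The paper does \emph{not} iterate in the hyperbolic balls $\mathcal B_{\lambda,r}$; instead it first substitutes $v(x,t)=w(x,\lambda t)$, which turns the problem into
\[
v_t=A(t)v_{xx}+h \quad\text{in }\{x>0\},\qquad v_t=\lambda\gamma(t)v_x \quad\text{on }\{x=0\},
\]
and then iterates in \emph{standard parabolic} cylinders $\mathcal P_\rho=(0,\rho)\times(-\rho^2,0]$ with parabolic rescaling. The point is that under $\tilde v(x,t)=\rho^{-(1+\alpha)}(v-l_0)(\rho x,\rho^2 t)$ one has $\tilde\lambda=\rho\lambda\to 0$, so by compactness the limit $\bar v$ satisfies $\bar v_t=\bar A\,\bar v_{xx}$ with $\bar v_t=0$ on $\{x=0\}$, i.e.\ the heat equation with \emph{constant} Dirichlet data. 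The classical $C^2$ estimate for this limit gives $|\bar v-(\bar a x+\bar b)|\le C\tau^2$ in $\mathcal P_\tau$, which drives the improvement.

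In your hyperbolic rescaling the interior equation degenerates instead to $0=A\bar w_{yy}$, so every limit is of the form $\bar w(y,\sigma)=a(\sigma)y+b(\sigma)$ with $b'=\gamma(t_0)a$; nothing constrains $a(\sigma)$, and hence no improvement of flatness can be read off. ``Applying the $C^\alpha$ estimate and reading off the $x$-slope at the origin'' is not a mechanism: a $C^\alpha$ function has no slope, and Theorem~\ref{H} alone does not manufacture one.

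A secondary issue: your $\ell_k=A_k+B_kx+(\gamma(t_0)B_k+f(t_0))(t-t_0)$ only matches the boundary condition at $t=t_0$. In part~(a) no regularity of $\gamma$ is assumed, so the residual $(\gamma(t)-\gamma(t_0))B_k$, multiplied by $\rho_k^{-\alpha}$ after rescaling, is unbounded. The paper avoids this by taking $l_0=a_0x+b_0(t)$ with $b_0'=\lambda\gamma(t)a_0$ (in the $v$ variable), which satisfies the boundary equation exactly; the resulting interior forcing $-\lambda\gamma(t)a_0$ is harmless. Even with this fix, the primary gap above remains.

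For part~(b) the paper's route is also much shorter than a second-order iteration: once~(a) shows the boundary condition holds classically and $v_x,\gamma,f\in C^{\alpha/2}_t$ on $\{x=0\}$, one gets $v(0,\cdot)\in C^{1,\alpha/2}$ directly, and standard Dirichlet Schauder estimates for the heat equation finish.
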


After subtracting $F(t):=\int _0^t f(s)ds$ from $w$ and replacing $h$ by $h - \lambda f(t)$ we may assume that $f \equiv 0$. 
We work with $v(x,t)=w(x,\lambda t)$, and after relabeling $\lambda t$ by $t$ in the arguments of $A$ and $h,$ we obtain 
\begin{equation}\label{LPoneda}\begin{cases}
v_t = A(t) v_{xx} +  h(x,t) &\text{in $(0,1) \times (-\lambda^{-1},0]$,}\\
v_t =  \lambda \gamma(t) \, v_x &\text{on $\{x=0\}$,}
\end{cases}
\end{equation}
with 
\begin{equation}\label{cofLP}
K^{-1} \le A(t), \, \gamma(t) \le K, \quad \quad |A'(t)| \le K, \quad | h | \le K x^{\alpha-1}.
\end{equation}

Lemma \ref{2D} is equivalent to the Lemma \ref{2Dv} below, where we establish the corresponding estimates for $v$ using parabolic scaling. 

\begin{lem}\label{2Dv}
Assume that $v$ is a viscosity solution of \eqref{LPoneda} in $\mathcal P_1$ with $\lambda \le 1$, and coefficients that satisfy \eqref{cofLP}. Then
\begin{equation}\label{vc1a}
\|v\|_{C_{x,t}^{1,\alpha}(\mathcal P_{1/2})} \le C ( \|v\|_{L^\infty(\mathcal P_1)} +1),
\end{equation}
and the free boundary condition is satisfied in the classical sense. If in addition
$$\|h\|_{C_{x,t}^{0,\alpha}}, \quad \|\gamma\|_{C_t^\frac \alpha 2}  \le K,$$
then 
$$\|v\|_{C_{x,t}^{2,\alpha}(\mathcal P_{1/2})} \le C ( \|v\|_{L^\infty(\mathcal P_1)} +1),$$
with $C$ depending only on $n$, $K$ and $\alpha$.

\end{lem}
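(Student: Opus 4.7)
The plan is a parabolic Schauder-type iteration adapted to our one-dimensional oblique derivative problem, with constants uniform in $\lambda \in (0,1]$. First I would make two preparatory reductions: after subtracting the antiderivative $F(t) = \int_0^t f(s)\,ds$, we may assume $f \equiv 0$ at the cost of modifying $h$ by a bounded term; and using the integrability of the bound $|h| \le K x^{\alpha-1}$ near $\{x=0\}$, I would peel off a particular solution $v_0 \in C^{1,\alpha}_{x,t}$ with $v_0|_{\{x=0\}} = 0$ absorbing the forcing (built by convolving $h$ against the half-space heat kernel with zero Dirichlet data), so that $\tilde v := v - v_0$ solves the purely homogeneous interior equation up to a harmless modification of the oblique term that can be absorbed by a further correction. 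The $C^\alpha$ estimate for $\tilde v$ then follows from Theorem \ref{H} in the 1D setting (the boundary condition $v_t = \lambda\gamma v_x$ with $\lambda\gamma \in [0,K]$ fits the two-sided structure in \eqref{LP*}), combined with the time rescaling $\lambda t \to t$ that converts the $d_\lambda$-H\"older estimate of Theorem \ref{H} into the standard parabolic H\"older estimate appearing in Lemma \ref{2Dv}.

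To bootstrap to $C^{1,\alpha}$, I would fix a boundary point $(0,t_0)$ and show inductively that at each dyadic scale $r_k = 2^{-k}$ there exists an affine function in $x$
$$\ell_k(x,t) = b_k(t) + a_k(t)\, x, \qquad b_k'(t) = \lambda\gamma(t)\, a_k(t),$$
with $|v - \ell_k| \le C r_k^{1+\alpha}$ in $\mathcal P_{r_k}(0,t_0)$ and $|a_k - a_{k-1}| \le C r_{k-1}^\alpha$. The inductive step is a compactness/contradiction argument: if the improvement fails at scale $r_k$, rescale $(v - \ell_k)/r_k^{1+\alpha}$ to unit size on $\mathcal P_1$, use the $C^\alpha$ estimate and Arzel\`a--Ascoli to extract a subsequential limit $V$, which, thanks to $|A'|, |\gamma'| \le K\lambda^{-1}$ compensated by the parabolic scaling, solves the constant-coefficient model problem
$$V_t = A(t_0)\, V_{xx} \text{ in } \{x>0\}, \qquad V_t = \lambda\gamma(t_0)\, V_x \text{ on } \{x=0\}.$$
Bounded solutions of this model admit affine approximations of the form $b(t) + a\, x$ with $b'(t) = \lambda\gamma(t_0)\, a$ (either by an explicit half-space heat kernel analysis or, equivalently, by the change of variables $y = x + \lambda\gamma(t_0)(t - t_0)$ that straightens the boundary condition and reduces matters to the classical heat equation), and the resulting affine part of $V$ contradicts the assumed failure. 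Summing the geometric series yields the desired pointwise $C^{1,\alpha}$ estimate in both $x$ and $t$, and in particular forces the free boundary condition to hold in the classical sense. Under the stronger hypotheses of part (b), the same scheme run one level higher, now comparing $v$ with quadratic polynomials in $x$ whose time-dependent coefficients are constrained by differentiating the boundary condition, yields the $C^{2,\alpha}$ bound; the $C^\alpha$ regularity of $h$ and $\gamma$ is exactly what is needed to push the error from $r^{1+\alpha}$ to $r^{2+\alpha}$.

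The main obstacle I expect is ensuring uniformity in $\lambda$ throughout the compactness argument. As $\lambda \to 0$ the boundary condition $V_t = \lambda\gamma V_x$ degenerates to $V_t = 0$, which freezes the boundary value in time; the approximating affine functions carry this $\lambda$ through the constraint $b_k' = \lambda\gamma\, a_k$, and the time regularity of $b_k$ is coupled to the size of $a_k$ in a $\lambda$-dependent way. One must verify that the model-problem theory is robust across this one-parameter family of oblique conditions, and that the geometric decay rate of the iteration does not degenerate as $\lambda \to 0$. This is essentially heat equation theory on a half-line with a bounded oblique coefficient that may vanish in the limit, a situation where uniform estimates follow by careful bookkeeping of the scaling, with the final constants depending only on $K$ and $\alpha$.
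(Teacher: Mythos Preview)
Your overall strategy (H\"older $\Rightarrow$ compactness-based $C^{1,\alpha}$ iteration $\Rightarrow$ $C^{2,\alpha}$) matches the paper's four-step proof, but the compactness step contains a conceptual error that propagates into your discussion of uniformity in $\lambda$.

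Under the parabolic blow-up $\tilde v(x,t) = r^{-(1+\alpha)}(v-\ell)(rx, r^2 t)$, the boundary condition $v_t = \lambda\gamma(t)\, v_x$ becomes $\tilde v_t = (r\lambda)\,\gamma(r^2 t)\,\tilde v_x$; since $r \to 0$ along the iteration, the effective parameter $\tilde\lambda = r\lambda$ tends to zero \emph{regardless of the initial value of $\lambda \le 1$}. The compactness limit is therefore always
\[
\bar v_t = \bar A\, \bar v_{xx} \ \text{ in } \{x>0\}, \qquad \bar v_t = 0 \ \text{ on } \{x=0\},
\]
with $\bar A$ constant---simply the heat equation on the half-line with constant Dirichlet data. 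The affine approximation $\bar a\,x + \bar b$ (with $\bar b$ a constant) then follows from the classical $C^2$ estimate. Your claimed limit $V_t = \lambda\gamma(t_0) V_x$ is incorrect, and the whole discussion of a ``one-parameter family of oblique conditions'' and the worry about degeneration as $\lambda \to 0$ is misplaced: the degeneration has already occurred in every blow-up. The change of variables $y = x + \lambda\gamma(t_0)(t-t_0)$ is aimed at the wrong model problem.

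Two smaller points. First, invoking Theorem~\ref{H} for the H\"older step is awkward: Lemma~\ref{2Dv} places $v$ on $\mathcal P_1$ with the standard parabolic distance, while Theorem~\ref{H} lives in $\mathcal C_1$ with the $d_\lambda$ distance, and undoing the time rescaling $v(x,t) = w(x,\lambda t)$ sends $\mathcal P_1$ to the thin strip $(0,1)\times(-\lambda,0]$, not to $\mathcal C_1$; also the boundary condition $v_t = \lambda\gamma v_x$ with $\lambda\gamma$ possibly much smaller than $K^{-1}$ does not fit the two-sided inequality in \eqref{LP*}. The paper instead proves the $C^\beta$ estimate directly in $\mathcal P_1$ via an explicit supersolution built from the $1$D heat profile $g$ solving $g_t = K^{-1} g_{xx}$ with step initial data. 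Second, the $C^{2,\alpha}$ step is simpler than a second quadratic iteration: once $v_x(0,\cdot),\,\gamma \in C^{\alpha/2}_t$, the boundary identity $v_t(0,t) = \lambda\gamma(t)\, v_x(0,t)$ gives $v(0,\cdot) \in C^{1,\alpha/2}_t$, and standard Schauder up to the boundary for the heat equation finishes the job. Your ``peel off a particular solution'' reduction is also unnecessary---the singular forcing $|h| \le K x^{\alpha-1}$ is scale-invariant under the blow-up (the rescaled $\tilde h = \rho^{1-\alpha} h(\rho x,\rho^2 t)$ satisfies the same bound) and is carried along harmlessly in both the barrier and the compactness arguments.
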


\begin{proof}

If $v$ solves \eqref{LPoneda} in $\mathcal P_\rho$ then the rescaling $$\tilde v(x,t):=\rho^{-\beta}v(\rho x, \rho^2 t) $$
solves \eqref{LPoneda} in $\mathcal P_1$ with coefficients 
\begin{equation}\label{A(t)}
\tilde A(t)= A(\rho^2t), \quad \tilde h(x,t)=\rho ^{2-\beta} h(\rho x, \rho^2 t), \quad \tilde \lambda = \rho \lambda , \quad \tilde \gamma(t)=\gamma(\rho^2t).
\end{equation}
Notice that the hypotheses on the coefficients are preserved as long as $\beta \le 1+ \alpha$, and moreover $\tilde \lambda \to 0$ as $\rho \to 0$.

\smallskip

We divide the proof in four steps.

\medskip

{\bf Step 1: H\"older estimates.} We show that $$\|v\|_{C_{x,t}^{0,\beta}(\mathcal P_{1/2})} \le C \left(\|v\|_{L^\infty(\mathcal P_1)}+1\right),$$
for some $\beta>0$ small.

Notice that after an initial dilation, we may assume that $\lambda \le \lambda_0$ is small. 
It suffices to prove the following claim.

If $v$ is a viscosity solution of \eqref{LPoneda} then
\begin{equation}\label{oscP}
\, osc_{\, \, \mathcal P_1} \, \, v\le 2 \quad \Longrightarrow \quad osc_{\, \, \mathcal P_{\rho}} \, \, v \le \frac 32, \quad \quad \mbox{with $\rho=c_0$ small, universal.}
\end{equation}
The H\"older estimate is obtained by iterating this claim in parabolic cylinders centered on the $t$ axis, while for the interior parabolic cylinders 
(included in $\{x>0\}$) we can apply directly the diminishing of oscillation for parabolic equations.

In order to prove \eqref{oscP}, we let $g(x,t)$ be the solution to the 1D heat equation on the real-line
\begin{equation}\label{gsubt}
g_t= K^{-1} g_{xx}, \quad g(x,0)=\chi_{(0,\infty)}- \chi_{(-\infty,0)}.
\end{equation}
Notice that for all $t>0$, in $x=0$ we have 
$$g(0,t)=0,  \quad \quad g_x(0,t) \le C t^{-1/2},$$ and 
$$g_t \le 0, \quad \text{for $x>0$}.$$
We want to show that if $|v| \le 1$ in $\mathcal P_1,$ then we can improve the upper bound or lower bound 
by a fixed amount in the interior, depending on the value of $v$ at $(0,-1)$, i.e.
$$\mbox{$|v| \le 1$ in $\mathcal P_1$ and $v(0,-1) \le 0,$ then $v \le 1/2$ in $\mathcal P_\rho$, with $\rho=c_0$.}$$
In $\mathcal P_1$ we compare $v$ with 
$$ G(x,t):=C_1 g(x,t+1) + \frac 14 (t+1)^{1/2} - C_2 x^{1+\alpha}.$$
We choose $C_2$ and then $C_1$ sufficiently large such that $G$ is a classical supersolution to \eqref{LPoneda} and $ G \ge 1$ on the boundary $(0,1] \times \{-1\}$ and $\{1\} \times [-1,0]$, while $G(0,0)=1/4$. Then we find $v \le G$ in $\mathcal P_1,$ which gives the claim \eqref{oscP} by choosing $c_0$ sufficiently small.

\medskip

{\bf Step 2: $C^{1,\alpha}$ estimates.} We show that \eqref{vc1a} holds by first establishing a pointwise $C^{1,\alpha}$ estimate at the origin.

After an initial dilation and after dividing by a large constant, we may assume that $\lambda \le \delta$, $|h| \le \delta x^{\alpha-1}$ for some small $\delta$, and $\|v\|_{L^\infty(\mathcal P_1)}$ is sufficiently small. 

{\it Claim.} If a function $l_0$ (linear in $x$) of the form
\begin{equation}\label{l0}
l_0=a_0 x + b_0(t), \quad \quad b_0'=\lambda \gamma(t) a_0, \quad \quad |a_0| \le 1,
\end{equation} 
approximates $v$ in $\mathcal P_\rho$ to order $1+\alpha$, i.e.
$$|v-l_0| \le \rho^{1+\alpha} \quad \quad \mbox{ in $\mathcal P_\rho$}, \quad \quad \rho\le \delta,$$
then we can approximate $v$ to order $1+\alpha$ in $\mathcal P_{c_1 \rho}$ by a function $l_{1}$ as above, with $|a_1-a_0| \le C \rho^\alpha$, and $c_1$ small universal.
 \texttt{}
Then the claim can be iterated indefinitely by starting with $l_0 \equiv 0$ in $\mathcal P_{\delta}$. 
  
 We prove the claim by compactness. Notice that $v-l_0$ solves \eqref{LPoneda} with a slightly modified $h$ that satisfies $|h| \le \delta x^{\alpha-1} + C \delta$.
 This means that the rescaled error 
 $$\tilde v(x,t):=\rho^{-(1+\alpha)} (v-l_0)(\rho x, \rho^2 t),$$
 satisfies \eqref{LPoneda} with coefficients as in \eqref{A(t)}. Since $\|\tilde v\|_{L^\infty} \le 1$, by Step 1 we know that
$$\|\tilde v\|_{C_{x,t}^{0,\beta}(\mathcal P_{1/2})} \le C.$$
 This means that if we consider a sequence of $\delta_n \to 0$ and corresponding solutions $v_n$ in $\mathcal P_{\rho_n},$ then we can extract a uniformly convergence subsequence of the rescalings $\tilde v_n$ in $\mathcal P_{1/2}$ such that 
 $$ \tilde v_n \to \bar v.$$
 Then the H\"older continuous limit function $\bar v$ is a viscosity solution of 
  \begin{equation*}\begin{cases}
\bar v_t = \bar A \, \bar v_{xx} &\text{in $\mathcal P_{1/2}$,}\\
\bar v_t =  0 &\text{on $\{x=0\}$,}
\end{cases}
\end{equation*}
with $\bar A$ constant. Since $\bar v$ is constant on the boundary $\{x=0\}$, the $C^2$ estimate for the standard heat equation implies
$$|\bar v - (\bar a x + \bar b)| \le C \tau^{2} \le \frac 12 \tau^{1+\alpha}\quad \quad \mbox{in} \quad \mathcal P_\tau, \quad \tau \le c_1.$$
This shows that if $\delta$ is chosen sufficiently small, then the rescaling $\tilde v$ satisfies the inequality above instead of $\bar v$ which implies
$$ |v - (a_1 x + b(t))| \le \frac 3 4  ( \tau \rho )^{1+\alpha} \quad \quad \mbox{in} \quad \mathcal P_{\tau \rho}, \quad \tau=c_1,$$
with 
$$a_1=a_0+ \rho^\alpha \bar a, \quad b(t)=b_0(t) + \rho^{1+\alpha} \bar b.$$
We define $b_1(t)$ so that $l_1$ has the form as in \eqref{l0}, that is $$ b_1'(t)= \lambda \gamma(t) a_1, \quad b_1(0)=b(0).$$ 
Then 
$$|(b_1-b)'| \le C |\bar a \rho^\alpha | \le C \rho^\alpha \quad \Longrightarrow \quad |b_1-b| \le C \rho^\alpha (\tau \rho)^2 \le \frac 1 4  ( \tau \rho)^{1+\alpha} \quad \mbox{in} \quad \mathcal P_{\tau \rho},$$
where we used $\rho \le \delta$ sufficiently small. In conclusion,
$$|v-l_1| \le   ( \tau \rho)^{1+\alpha} \quad \mbox{in} \quad \mathcal P_{\tau \rho}, \quad \quad l_1=a_1x + b_1(t),$$
and the claim is  proved.

\

We remark that the oscillation of $b_0(t)$ which appears in the approximation function $l_0$ in \eqref{l0} is less than $C\rho^2$ in $\mathcal P_\rho$. 
Thus we can modify $b_0$ to be constant in \eqref{l0} and take $l_0$ to be linear, and then adjust the error $\rho^{1+\alpha}$ by $C \rho^{1+\alpha}$. 
This pointwise $C^{1,\alpha}$ estimate can be applied at other points on $\{x=0\}$, 
which combined with interior $C^{1,\alpha}$ estimates for parabolic equations implies the desired conclusion \eqref{vc1a}.

\medskip

{\bf Step 3. Boundary regularity.} We check that $v$ is $C^1$ on $\{x=0\}$ and the boundary condition is satisfied in the classical sense.
 
For this assume by contradiction that there exists a sequence $t_k \to 0^-$ such that 
\begin{equation}\label{1tn}
\frac {1}{t_k}(v(0,t_k)-v(0,0)) < \mu := \lambda \gamma(0) (v_x(0,0) - \eta), \quad \mbox{for some $\eta>0$}.  
\end{equation}
For each $k,$ we look at the contact point where the graph of $v$ is touched by below by a translation of the graph of the classical strict subsolution to \eqref{LPoneda}
$$g(x,t):=v(0,0) + \mu t + x\left(v_x(0,0)- \frac 12 \eta\right) + C x^{1+\alpha},$$
in the domain $D_k:=[0,c(\eta)] \times [t_k,0]$. 

We choose $c(\eta)$ small such that
$g_x(x,t) < v_x(x,t)$ in the domain $D_k$ for all large $k$. 
This implies that the contact point must occur on $D_k \cap \{x=0\}$. 
On the other hand, \eqref{1tn} gives $$v(0,t_k)-v(0,0) > g(0,t_k)-g(0,0)$$ which shows that the contact point is different than $(0,t_k)$ and we reach a contradiction.

\medskip

{\bf Step 4. $C^{2,\alpha}$ estimates.} On $\{x=0\}$ we know that $v_x,\gamma \in C^{\alpha/2}$, 
and the boundary condition implies that $v(0,t) \in C^{1,\alpha/2}$. Now we can apply the standard $C^{2,\alpha}$ Schauder estimates up to the boundary 
for the heat equation. 
\end{proof}

\section{Viscosity solutions for the linear problem}

In this section, we collect some general facts about viscosity solutions for the linear problem \eqref{LP} and establish the existence and uniqueness claim in Proposition  \ref{DiPr} by Perron's method. Similar results for different types of boundary conditions were established by G. Lieberman (see for example \cite{L}). However, we are not aware of an existence result that applies directly to the linear problem \eqref{LP}. Therefore, for completeness we provide the details in this case. 
 
Recall that $v \in C (\mathcal C_1)$ satisfies 
\begin{equation}\label{LPA}\begin{cases}
\lambda v_t \le  tr(A(t) D^2v)&\text{in $\mathcal C_{1},$}\\
v_t \le \gamma(t) \cdot \nabla v &\text{on $\mathcal F_{1}$,}
\end{cases}
\end{equation}
in the viscosity sense if $v$ cannot be touched by above at any point 
$(x_0,t_0) \in \mathcal C_1\cup \mathcal F_1$ in a small neighborhood 
$\mathcal B_{r}(x_0,t_0)$ by a classical strict supersolution $w \in C^2 (\overline {{\mathcal B}_r(x_0,t_0)})$. As usually, this definition is equivalent to the one where we restrict $w$ to belong to the class of quadratic 
polynomials rather than to the class of $C^2$ functions.

Another equivalent way is to say that $v$ is a viscosity subsolution of the parabolic equation in $\mathcal C_1$, 
and a viscosity subsolution of the boundary condition on $\mathcal F_1$. This last condition means that we cannot touch $v$ locally by above at any point 
$(x_0,t_0) \in \mathcal F_1$ by a function $w \in C^1 (\overline {{\mathcal B}_r(x_0,t_0)})$ (or say $w$ is a linear function) that satisfies
$$w_t(x_0,t_0) > \gamma(t_0) \cdot \nabla w(x_0,t_0).$$
The two definitions are the same since, if $w \in C^1$ is as above, and say $(x_0,t_0)=(0,0)$, then a vertical translation of the quadratic polynomial 
$$w(0)+(w_t(0)-\eps)t + (\nabla w(0)+ \eps e_n) \cdot x + M(|x'|^2 - nK^2 x_n^2),$$ 
must touch $v$ by above at some interior point $(x,t) \in \mathcal B_r$. Here $r$ is chosen sufficiently small 
and $M$ large, appropriately, and then the polynomial is a strict supersolution in $\mathcal B_r$.

We state the comparison principle for viscosity solutions.

\begin{lem}\label{comp}
Assume $v_1$ is a viscosity subsolution, and $v_2$ a viscosity supersolution to \eqref{LP} in $\overline{\mathcal C_1}$. 
If $v_1\le v_2$ on $\p_D \mathcal C_1$ then $v_1 \le v_2$ in $\mathcal C_1$.
\end{lem}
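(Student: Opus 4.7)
The plan is a standard perturbation-and-contradiction comparison argument, with the extra twist needed for the oblique-derivative condition on $\mathcal F_1$. I will first convert $v_1$ into a quantitatively strict viscosity subsolution via an explicit barrier, then use doubling of variables to produce test functions at a would-be positive contact, and finally close the argument using the strict gap, treating interior and boundary contacts separately.

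For the barrier I take $\psi(x,t) = C(t+2) + x_n$ with $C>K$ universal. A direct check shows that $\psi$ is a $C^2$ classical strict supersolution of \eqref{LP}: $\lambda\psi_t = C\lambda > 0 = \mathrm{tr}(A(t)D^2\psi)$ in $\mathcal C_1$, and $\psi_t = C > K \ge \gamma_n(t) = \gamma(t)\cdot\nabla\psi$ on $\mathcal F_1$. Setting $\tilde v_1 := v_1 - \eps\psi$, any $C^2$ function $\varphi$ touching $\tilde v_1$ from above at a point $(x_0,t_0)$ yields the $C^2$ function $\varphi+\eps\psi$ touching $v_1$ from above there, and the viscosity subsolution property of $v_1$ forces at $(x_0,t_0)$
\[
\lambda\varphi_t \le \mathrm{tr}(A D^2\varphi) - \eps c_0 \ (\text{interior}),\qquad \varphi_t \le \gamma\cdot\nabla\varphi - \eps c_0 \ (\text{on }\mathcal F_1),
\]
with $c_0=c_0(\lambda,K)>0$. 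Since $\psi\ge 1$ on $\overline{\mathcal C_1}$, the hypothesis $v_1\le v_2$ on $\p_D\mathcal C_1$ upgrades to the strict inequality $\tilde v_1 < v_2$ there.

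Suppose by contradiction the lemma fails, so for some $\eps>0$ the quantity $\max_{\overline{\mathcal C_1}}(\tilde v_1-v_2)$ is positive, hence attained in $\mathcal C_1\cup\mathcal F_1$. To manufacture smooth test functions I apply the parabolic Crandall--Ishii doubling: maximize
\[
\Phi_\alpha(x,t,y,s) = \tilde v_1(x,t) - v_2(y,s) - \tfrac{\alpha}{2}\bigl(|x-y|^2 + (t-s)^2\bigr)
\]
over $\overline{\mathcal C_1}\times\overline{\mathcal C_1}$ and let $\alpha\to\infty$. Standard estimates give $\alpha(|x_\alpha-y_\alpha|^2+(t_\alpha-s_\alpha)^2)\to 0$, and the parabolic Crandall--Ishii lemma supplies symmetric $X_\alpha\le Y_\alpha$ and a common time-slope $\tau_\alpha=\alpha(t_\alpha-s_\alpha)$ in the second-order parabolic jets of $\tilde v_1$ at $(x_\alpha,t_\alpha)$ and of $v_2$ at $(y_\alpha,s_\alpha)$. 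If both $(x_\alpha)_n,(y_\alpha)_n>0$ along a subsequence, inserting these jets into the strict sub/supersolution interior inequalities and using continuity of $A$ together with $|t_\alpha-s_\alpha|\to 0$ yields $\lambda\tau_\alpha\le \lambda\tau_\alpha-\eps c_0 + o(1)$, the desired contradiction.

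The main obstacle, as usual for oblique-derivative problems, is the boundary case, when the maximizing sequence has $(x_\alpha)_n=0$ or $(y_\alpha)_n=0$: the interior equation no longer applies and one must test against the oblique condition, which (as the excerpt points out) is equivalently tested against affine functions satisfying $\varphi_t>\gamma\cdot\nabla\varphi$, with $\gamma_n>0$ providing genuine obliqueness. Here I would modify the penalization by adding a small term proportional to $x_n+y_n$ (or, more precisely, aligned with the co-normal direction determined by $\gamma$) to $\Phi_\alpha$, so that the first-order pieces of the Crandall--Ishii jets on $\mathcal F_1$ become admissible in the boundary inequalities for $\tilde v_1$ and $v_2$; the strict gap $\eps c_0$ in the boundary inequality for $\tilde v_1$ against the supersolution inequality for $v_2$ then again produces the contradiction. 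This boundary-flux step, in the spirit of Lieberman \cite{L}, is the only delicate part of the argument; the interior step is routine once the strict subsolution reduction in Step~1 is in place.
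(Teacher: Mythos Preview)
Your route via strict-subsolution perturbation and Crandall--Ishii doubling is genuinely different from the paper's proof, which avoids the doubling machinery entirely. After passing to $w_i(x,t)=v_i(x,\lambda t)$ and localizing at a first-contact boundary point, the paper regularizes $w_1,w_2$ by sup/inf-convolutions in the tangential variable $x'$ (so their traces on $\{x_n=0\}$ become semiconvex/semiconcave in $x'$) and replaces them by solutions of the interior parabolic equation with the same boundary data. The heart of the argument is then an iterated application of the barrier Lemma~\ref{nabw}, which bounds how fast a Lipschitz trace can move in $t$; bootstrapping this estimate shows that each $w_i(0,t)$ is actually differentiable at $t=0$ with $\partial_t w_1(0,0)\le \partial_t w_2(0,0)$, contradicting the assumed strict ordering $w_1\le w_2+\mu t$. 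This approach exploits the specific structure of \eqref{LP} rather than general viscosity-solution machinery.

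That said, the boundary step you yourself flag as ``delicate'' is a genuine gap as written. The boundary condition on $\mathcal F_1$ is \emph{dynamic}---it couples $v_t$ with $\nabla v$---so the standard oblique-derivative comparison arguments you allude to do not apply verbatim. The real obstruction is the mixed case where one doubled point lies on $\{x_n=0\}$ and the other in $\{x_n>0\}$: the first yields a first-order inequality $\tau_\alpha\le\gamma\cdot p_\alpha-\eps c_0$, the second a second-order one $\lambda\tau_\alpha\ge\mathrm{tr}(A\,Y_\alpha)$, and these cannot be combined to produce a contradiction. Adding a linear term in $x_n+y_n$ does not by itself force both points into the same regime. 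One would need a penalization tailored to the dynamic boundary condition (closer in spirit to Barles' treatment of fully nonlinear Neumann problems than to \cite{L}) to rule out the mixed case; until that step is actually carried out, the argument is incomplete.
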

\begin{cor}\label{comp1}
The difference of two viscosity solutions of \eqref{LP} is also a viscosity solution of \eqref{LP}.
\end{cor}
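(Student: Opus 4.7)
The plan is to leverage linearity of \eqref{LP} together with the Dirichlet solvability from Proposition \ref{DiPr} and the comparison principle Lemma \ref{comp} to show that the difference $u_1-u_2$ of two viscosity solutions actually coincides with a classical solution of \eqref{LP}, and is therefore trivially a viscosity solution.

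First, I would set $\phi := (u_1-u_2)|_{\p_D \mathcal C_1}$, which is continuous since both $u_1,u_2$ are continuous on $\overline{\mathcal C_1}$. By Proposition \ref{DiPr} there exists a classical solution $w \in C^{2,1}(\mathcal C_1)\cap C^0(\overline{\mathcal C_1})$ of \eqref{LP} with $w=\phi$ on $\p_D\mathcal C_1$. The interior estimates in Proposition \ref{DiPr} also yield that $w$ is $C^{2,1}$ up to every point of $\mathcal F_1$, since each such point has strictly positive $d_\lambda$-distance from $\p_D\mathcal C_1$; in particular the oblique condition $w_t=\gamma(t)\cdot\nabla w$ holds for $w$ in the classical sense on $\mathcal F_1$.

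Next, I would show that $u_2+w$ is itself a viscosity solution of \eqref{LP}. Suppose a $C^2$ strict supersolution $\varphi$ of \eqref{LP} touches $u_2+w$ from above at some $(x_0,t_0)\in\mathcal C_1\cup\mathcal F_1$. Near this point $\varphi-w$ is $C^2$ (thanks to the smoothness of $w$ established above), touches $u_2$ from above at $(x_0,t_0)$, and by linearity is itself a $C^2$ strict supersolution of \eqref{LP}: subtracting the classical equations $\lambda w_t=tr(A(t)D^2w)$ and $w_t=\gamma(t)\cdot\nabla w$ from the strict inequalities for $\varphi$ preserves strictness on both the interior and boundary operators. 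This contradicts $u_2$ being a viscosity subsolution; the symmetric argument shows $u_2+w$ is also a viscosity supersolution.

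To finish, both $u_1$ and $u_2+w$ are viscosity solutions of \eqref{LP} in $\overline{\mathcal C_1}$ that coincide on $\p_D\mathcal C_1$ by construction, so Lemma \ref{comp} applied in both directions gives $u_1\equiv u_2+w$ and hence $u_1-u_2=w$. The only step that requires care is the $C^{2,1}$-regularity of $w$ up to $\mathcal F_1$ used in the linearity argument; this is provided precisely by the interior bound of Proposition \ref{DiPr}, since the $d_\lambda$-distance from any point of $\mathcal C_1\cup\mathcal F_1$ to $\p_D\mathcal C_1$ is locally bounded below. Everything else reduces to a direct application of the two tools already established.
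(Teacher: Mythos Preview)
Your argument is circular. You invoke Proposition \ref{DiPr} both for the existence of the classical solution $w$ and, crucially, for its $C^2$ regularity up to $\mathcal F_1$. But those interior estimates in Proposition \ref{DiPr} are taken from Proposition \ref{estimate}, and the proof of Proposition \ref{estimate} (Step 1) explicitly uses Corollary \ref{comp1} --- ``we prove in appendix that the difference of two viscosity solutions is still a viscosity solution'' --- in order to pass the H\"older estimate of Theorem \ref{H} to discrete $x'$-differences of $v$ and thereby bound $D^k_{x'}v$. So the chain is: your proof $\Rightarrow$ Proposition \ref{DiPr} $\Rightarrow$ Proposition \ref{estimate} $\Rightarrow$ Corollary \ref{comp1}. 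In the paper's logical order the corollary must be available \emph{before} Proposition \ref{estimate}, hence before the regularity part of Proposition \ref{DiPr} that you need to make $\varphi-w$ a legitimate $C^2$ test function at points of $\mathcal F_1$.

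The intended (and non-circular) route is local and uses only Lemma \ref{comp}. You essentially already wrote the key computation in your Step 2: by linearity of \eqref{LP}, if $\varphi$ is a $C^2$ strict supersolution and $v_2$ is a viscosity supersolution, then $v_2+\varphi$ is again a viscosity supersolution --- any $C^2$ test function $\psi$ touching $v_2+\varphi$ from below yields the $C^2$ function $\psi-\varphi$ touching $v_2$ from below, and the two linear inequalities add. Now if such a $\varphi$ touches $v_1-v_2$ from above at $(x_0,t_0)$ in some $\mathcal B_r(x_0,t_0)$ (strictly on the Dirichlet part of the boundary after a harmless perturbation), then $v_1$ is a subsolution, $v_2+\varphi$ is a supersolution, $v_1<v_2+\varphi$ on the Dirichlet boundary of $\mathcal B_r$, yet equality holds at the interior point $(x_0,t_0)$; this contradicts (the localized) Lemma \ref{comp}. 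No global Dirichlet problem and no auxiliary $w$ are needed --- the $C^2$ function you subtract from test functions is $\varphi$ itself, whose smoothness is given, not a solution whose regularity up to $\mathcal F_1$ you would have to establish.
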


We work with the rescaling $w(x,t)=v(x,\lambda t)$.

First we prove a preliminary result on the evolution in time of a Lipschitz ``trace" $w((x',0),t)$ under specific growth assumptions.

\begin{lem}
Assume that $w\leq 1$ satisfies 
\begin{equation}\label{wsub}\begin{cases}
w_t \le \mathcal M^+_K (D^2w) + 1 &\text{in $(Q_1\cap\{x_n>0\}) \times (0,T],$}\\
\frac 1 \lambda w_t \le K w_n^+ -K^{-1} w_n^- + K |\nabla_{x'} w|&\text{on $\{x_n=0\}$,}
\end{cases}
\end{equation}
and
$$ w((x',0),0) \le |x'|^2.$$
Then $$w(0,t) \le C \lambda (t^{1/2} +t) \quad \mbox{for $t \ge 0$,}$$
with $C$ depending on $n$ and $K$.
\end{lem}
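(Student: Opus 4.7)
The plan is to prove the bound by constructing an explicit classical supersolution (barrier) $\bar w$ on $(Q_1 \cap \{x_n>0\}) \times (0,T]$ and applying the viscosity comparison principle to conclude $w \le \bar w$.

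For the ansatz I would take
$$\bar w(x,t) := |x'|^2 + G(x_n,t) + Mt + C_1 \lambda\bigl(2\sqrt t + t\bigr),$$
where $G(x_n,t) := \mathrm{erf}\bigl(x_n\sqrt{K}/(2\sqrt t)\bigr)$ is the $1$-dimensional self-similar half-space heat profile, so that $G_t = K^{-1} G_{nn}$, $G(0,t)=0$ for $t>0$, $G(x_n,0^+)=1$, and $G_{nn}\le 0$; $M$ and $C_1$ are universal constants depending on $n$ and $K$. Here $|x'|^2$ matches the initial trace hypothesis on $\{x_n=0\}$, $G$ absorbs the influence of the global bound $w\le 1$ felt at the origin through the parabolic diffusion, and $C_1\lambda(2\sqrt t+t)$ accounts for the slow, $\lambda$-scaled oblique boundary evolution.

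The verifications I would carry out in order are: (i) the interior supersolution property $\bar w_t \ge \mathcal M_K^+(D^2\bar w)+1$: because $G_{nn}\le 0$ one has $\mathcal M_K^+(G_{nn}\, e_n\otimes e_n) = K^{-1}G_{nn}$, so the $G$-contribution $G_t - K^{-1}G_{nn}$ vanishes identically, while the $2(n-1)K$ produced by $|x'|^2$ plus the $+1$ source is absorbed by choosing $M \ge 2(n-1)K+1$; (ii) the boundary supersolution inequality on $\{x_n=0\}$: since $\bar w_n(0,t) = G_n(0,t) = \sqrt{K/(\pi t)}$ and $|\nabla_{x'}\bar w(0,t)|=2|x'|\le 2$, the leading balance $C_1\lambda/(2\sqrt t) \ge \lambda K^{3/2}/\sqrt{\pi t}$ forces $C_1\ge 2K^{3/2}/\sqrt\pi$, with the lower-order $2\lambda K|x'|$ absorbed into $M$; (iii) the parabolic boundary domination $\bar w \ge w$: at $\{t=0,x_n=0\}$, $\bar w = |x'|^2$ matches the initial trace hypothesis; at $\{t=0,x_n>0\}$, $G(x_n,0^+) = 1$ gives $\bar w\ge 1\ge w$; on the lateral faces $\{|x_i|=1\}$ already $|x'|^2\ge 1\ge w$; and on $\{x_n=1\}$, $G(1,t)$ is exponentially close to $1$ for small $t$ while $Mt$ dominates for larger $t$.

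After applying comparison, $w(0,t) \le \bar w(0,t) = Mt + C_1\lambda(2\sqrt t + t)$. The main obstacle is the $Mt$ term, which carries no $\lambda$ factor and therefore does not immediately match the claimed $C\lambda(\sqrt t+t)$. I would resolve this by a dichotomy exploiting the global bound $w\le 1$: for small times (say $t\le c\lambda$) the $Mt$ term is already dominated by $C_1\lambda t$ and the estimate follows; for larger times one iterates the barrier argument on rescaled sub-cylinders, using the bound just established at time $c\lambda$ as new initial data and invoking $w\le 1$ to propagate the estimate up to $t=T$. This iteration, together with the correct absorption of the $+1$ source via the a priori bound $w\le 1$, is the most delicate step.
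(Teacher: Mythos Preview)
Your barrier ansatz is almost exactly the paper's, and the verifications (i)--(iii) you outline are the right ones. The genuine gap is precisely the one you flag yourself: the $Mt$ term. Your proposed dichotomy/iteration will not close it. For $t\le c\lambda$ you would need $Mt\le C\lambda t$, i.e.\ $M\le C\lambda$, which is false since $M$ is universal and $\lambda$ small; alternatively $Mt\le C\lambda\sqrt t$ only holds for $t\lesssim\lambda^2$, so the ``small time'' regime is too short. And restarting the barrier at a later time does not help: the bound you carry forward is $w((x',0),t_0)\le |x'|^2+Mt_0+\cdots$, and each iteration adds another increment of order $M\cdot(\text{step length})$, so after $O(t/\text{step})$ steps you recover exactly $Mt$.

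The paper's fix is a one-line replacement: instead of $Mt$, add the spatial term $C_2(2x_n-x_n^2)$ to the barrier. This term (a) vanishes at $x_n=0$, so it contributes nothing to $\bar w(0,t)$ and the final bound is pure $C\lambda(\sqrt t+t)$; (b) has $\partial_{nn}=-2C_2$, so with $C_2$ large its contribution $-2C_2K^{-1}$ to $\mathcal M_K^+(D^2\bar w)$ absorbs both the source $+1$ and the $2(n-1)K$ coming from $|x'|^2$, making the interior inequality hold with no time term needed; (c) equals $C_2$ at $x_n=1$, which handles the lateral face $\{x_n=1\}$ for all $t$; (d) contributes $+2C_2$ to $\bar w_n$ on $\{x_n=0\}$, which is then absorbed by choosing the constant $C_1$ in $C_1\lambda(\sqrt t+t)$ large depending on $C_2$. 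With this single change your proof goes through directly, with no iteration.
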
 

\begin{proof}
We compare $w$ with 
$$G(x,t):= g(x_n,t) + C \lambda (t^{1/2} + t) + |x'|^2  + C (2 x_n - x_n^2),$$
where $g(x_n,t)$ is the solution to the 1D heat equation on the real-line (see \eqref{gsubt})
$$ g_t= K^{-1} g_{nn}, \quad g(x_n,0)=\chi_{(0,\infty)}- \chi_{(-\infty,0)}.$$
It is easy to check that $G$ is a classical supersolution which is above $w$ on the boundary of our domain, and that gives the desired result.
\end{proof}

\begin{lem}\label{nabw}
Assume that $w\le 1$ satisfies \eqref{wsub} in $\mathcal C_1$ and the trace of $w$ on $\{x_n=0\}$ is Lipschitz, i.e.
$$|\nabla_{x'} w| \le 1 \quad \mbox{ on $\{x_n=0$\}}.$$
Then
$$w((x',0),t) \ge w((x',0), 0) - C \lambda^ \frac 2 3 \,| t|^\frac 1 2 \quad \quad  \mbox{if $ x' \in Q'_{1/2}.$}$$
\end{lem}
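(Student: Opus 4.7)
The strategy is comparison with a classical strict supersolution, exploiting that $w$ is a viscosity subsolution of \eqref{wsub}. After translating in $x'$, we may take $x'_0 = 0$; writing $h := w((0,0), 0)$, $\tau := -t > 0$, and $m := w((0,0), -\tau)$, the claim becomes the upper bound $h - m \le C \lambda^{2/3}\tau^{1/2}$. The Lipschitz hypothesis applied at time $-\tau$ yields the trace bound $w((x', 0), -\tau) \le m + |x'|$ on $Q'_{1/2}$, while globally $w \le 1$.

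The proof would mirror the barrier used in the preceding (unnamed) lemma, replacing its quadratic $|x'|^2$ trace bound by a smoothed Lipschitz envelope of the new linear-in-$|x'|$ data. Concretely, I would try a barrier of the form
\[
V(x, s) \;:=\; m + \sqrt{|x'|^2 + \rho^2} \;+\; (1-m)\,g(x_n, s + \tau) \;+\; C_2(2x_n - x_n^2) \;+\; \mu(s^{1/2} + s),
\]
for $s \in [-\tau, 0]$, with free parameters $\rho, C_2, \mu > 0$, where $g$ denotes the 1D heat-kernel evolution on $\mathbb R$ with initial datum $\mathrm{sign}(\cdot)$ (so $g_t = K^{-1} g_{xx}$, $g(0, \cdot) \equiv 0$, and $g(x_n, 0^+) = 1$ for $x_n > 0$). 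The smoothed $\sqrt{|x'|^2 + \rho^2} \ge |x'|$ dominates the trace data at $s = -\tau$; the $(1-m)g$ term forces $V \ge 1 \ge w$ in the interior at $s = -\tau^+$ (where $g \to 1$); the $C_2$-term controls the outer lateral face $\{x_n = 1\}$; and the $\mu$-term supplies the strict growth needed for the boundary supersolution condition near $\{x_n = 0\}$.

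The parameters are then determined by requiring $V$ to be a classical strict supersolution of \eqref{wsub}. In the interior, the common $(1-m)K^{-1} g_{nn}$ contributions cancel between $V_s$ and $\mathcal M_K^+(D^2 V)$, reducing the interior inequality to $C_2 \gtrsim K^2/\rho$ (needed to absorb the positive $x'$-Hessian of $\sqrt{|x'|^2 + \rho^2}$, whose Pucci norm is of order $K(n-1)/\rho$). On the boundary $\{x_n = 0\}$, using $g_s(0, \cdot) = 0$ together with $g_x(0, s+\tau) \sim (s+\tau)^{-1/2}$, the boundary supersolution inequality forces $\mu \gtrsim K\lambda C_2 + K^{3/2}\lambda (1-m)$. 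By viscosity comparison applied to the classical strict supersolution $V$ and the subsolution $w$, we get $V \ge w$ throughout, and evaluating at $((0, 0), 0)$ gives
\[
h \;\le\; V((0, 0), 0) \;=\; m + \rho + \mu(\tau^{1/2} + \tau).
\]
Substituting the expression for $\mu$ and balancing $\rho$ against $\mu \tau^{1/2}$ at the correct mixed parabolic/hyperbolic scale yields the optimal choice $\rho \sim \lambda^{2/3}\tau^{1/2}$, and hence the claimed estimate.

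The main obstacle is to carry out this balancing precisely so as to recover the sharp exponent $\lambda^{2/3}$: the correct optimization exploits the scaling mismatch between the interior parabolic time $x_n^2$ and the hyperbolic boundary time $x_n/\lambda$, and the shift $s+\tau$ in the argument of $g$ is chosen so that $g_x$ remains integrable in $s$ near the initial time $s=-\tau$. Once the barrier is correctly tuned, the remainder---verifying the strict supersolution inequalities by direct computation and invoking the viscosity comparison principle of Lemma \ref{comp}---proceeds routinely.
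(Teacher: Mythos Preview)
Your barrier is well-conceived but the final optimization does \emph{not} produce the exponent $\lambda^{2/3}$. Carrying out your own bookkeeping: the Hessian of $\sqrt{|x'|^2+\rho^2}$ is of size $1/\rho$, forcing $C_2\gtrsim 1/\rho$; on $\{x_n=0\}$ this gives $V_n\gtrsim 1/\rho$, hence the boundary inequality forces $\mu\gtrsim \lambda C_2\sim \lambda/\rho$. Evaluating at the origin then yields
\[
h-m \;\le\; \rho + \mu(\tau^{1/2}+\tau)\;\sim\;\rho + \frac{\lambda\tau^{1/2}}{\rho},
\]
and the minimum over $\rho$ is $(\lambda\tau^{1/2})^{1/2}=\lambda^{1/2}\tau^{1/4}$, which is strictly weaker than $\lambda^{2/3}\tau^{1/2}$ when $\lambda,\tau<1$. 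Plugging in your announced choice $\rho=\lambda^{2/3}\tau^{1/2}$ gives $\mu\tau^{1/2}\sim\lambda^{1/3}$, which dominates and again fails to be $O(\lambda^{2/3}\tau^{1/2})$. Decoupling $\mu$ into a $\mu_1(s+\tau)^{1/2}$ piece and a $\mu_2(s+\tau)$ piece improves this only to $(\lambda\tau)^{1/2}$, still short.

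The paper proceeds differently and the difference is exactly what buys the correct exponent. Instead of regularizing the cone directly, it dominates $|x'|$ by the parabola $Cr^2+r^{-2}|x'|^2$ and then \emph{rescales} by $r$, setting $\tilde w(y,s)=w(ry,t+r^2s)-w(0,t)-Cr^2$. This rescaling replaces $\lambda$ by $\tilde\lambda=\lambda r$ and reduces the trace to $|y'|^2$, so the previous lemma applies with \emph{universal} constants. One obtains $h-m\le Cr^2+C\lambda\tau^{1/2}+C\lambda\tau/r$, and balancing $r^2$ against $\lambda\tau/r$ gives $r=(\lambda\tau)^{1/3}$ and the bound $(\lambda\tau)^{2/3}+\lambda\tau^{1/2}\le C\lambda^{2/3}\tau^{1/2}$. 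The crucial gain is that after localizing to scale $r$ the auxiliary term $C_2(2y_n-y_n^2)$ carries a universal $C_2$, so its normal derivative contributes $O(1/r)$ rather than the $O(1/\rho)$ your global barrier incurs; this extra factor of $r$ is precisely the gap between $\lambda\tau/r$ and your $\lambda\tau^{1/2}/\rho$.
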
 

\begin{proof}
We prove the inequality for $x'=0$. Since $w$ is Lipschitz the parabola 
$$w(0,t) + C r^2 + r^{-2} |x'|^2 $$ 
is greater than $w((x',0),t),$ with $r$ to be specified later. Now we can apply the previous lemma to the rescaling
$$\tilde w (y,s):= w(ry,t+r^2s) - w(0,t)-C r^2,$$ which solves \eqref{wsub} with $\tilde \lambda=\lambda r$, and obtain that
$$\tilde w (0,s) \le C \tilde \lambda (s^{1/2} + s).$$ 
This gives
$$C (r^2 +  \lambda | t|^ \frac 12 + \lambda r^{-1}|t|) \ge w(0,0) - w(0,t),$$ 
and we choose $r=(\lambda |t|)^{1/3}$ to get
$$w(0,t) \ge w(0,0) - C (\lambda | t|^ \frac 12  + (\lambda |t|)^{2/3}) \ge w(0,0)-C \lambda^ \frac 2 3 \, |t|^\frac 1 2.$$
\end{proof}

\begin{rem}\label{perr}
The proof of Lemma \ref{nabw} shows that we can construct a supersolution $\bar G(x,t)$ in $\mathcal C_1$ such that $\bar G((x',0),-1)=|x'|$, 
$\bar G \ge 1$ on the remaining part of $\p_D \mathcal C_1$, and so that $\bar G(0,t) \le C \lambda ^\frac 2 3 \, |t| ^\frac 12$. 
Similarly, given $\alpha>0$, we can construct a supersolution with $\bar G((x',0),-1)=|x'|^\alpha$, 
$\bar G \ge 1$ on the remaining of $\p_D \mathcal C_1$ and such that $\bar G(0,t) \le C (\lambda |t|)^\beta,$ for some $\beta$ depending on $\alpha$.
\end{rem}

\medskip

We are now ready to prove our main lemma.

\smallskip

{\it Proof of Lemma $\ref{comp}.$} Let $w_i(x,t)=v_i(x,\lambda t)$, $i=1,2$, so that $w_1$ is a subsolution and $w_2$ a  supersolution of
$$\begin{cases}
w_t = tr(A(t) D^2w) &\text{in $\{x_n>0\}$,}\\
\frac 1 \lambda w_t = \gamma(t) \cdot \nabla w&\text{on $\{x_n=0\}$,}
\end{cases}
$$
and we want to show that $w_1$ cannot touch $w_2$ strictly by below at an interior point. Assume by contradiction that this is the case.

The standard viscosity theory of parabolic 
equations implies that the  contact point cannot occur in $\{x_n>0\}$. Below we denote by $C$, $c$ various constants that may depend on $w_i$ and $\lambda$.


After a translation and a dilation we may assume that in $\mathcal C_1$
$$w_1 \le  w_2 + \mu t , \quad \quad w_1(0,0)=w_2(0,0)=0,$$ 
for some $\mu>0$ small.
Without loss of generality we may also assume that $w_1/w_2$ has a semiconvex/semiconcave trace in the $x'$ variable, that is
\begin{equation}\label{d2w}
D^2_{x'} w_1 \ge - I,\quad \quad D^2_{x'}w_2 \le I,
\end{equation}
and also
\begin{equation}\label{L^infty-bound-w_i}
\|w_i\|_{L^\infty} \le 1
\end{equation} 
and each $w_i$ solves the parabolic equation in the interior.
This is achieved in the following way. First we replace a subsolution $w$ with the standard regularization using the sup-convolutions in the $x'$ 
variable
$$w_\eps(x,t)= \max_{y} \left\{w(y,t) -\frac {1}{2 \eps}|y'-x'|^2\right\},$$
then we divide $w_\eps$ by a large constant, and in the end we solve the parabolic equation in the interior of $\mathcal C_1$ by keeping the same boundary 
values on the parabolic boundary. All these operations maintain the subsolution property of $w$, and justify the extra assumptions \eqref{d2w}-\eqref{L^infty-bound-w_i}.

 Moreover, after subtracting from each $w_i$ a function of the type $a' \cdot x' + b(t)$ with $\frac{d}{dt}b(t)=\lambda a' \cdot \gamma(t)$  we may assume in addition that 
 \begin{equation}\label{d2w2}
 w_i(0,0)=0, \quad \nabla_{x'}w_i(0,0)=0,
 \end{equation} and the interior 
 parabolic equations have the form
 $$
\partial _t w_i = tr(A(t) D^2w_i) + h(t), \quad \quad \quad |h| \le C.$$
We show that $w_i(0,t)$ are differentiable at the origin in the $t$ variable, and that the derivative of $w_1$ is less than the derivative of $w_2,$ which 
would contradict our hypothesis that $w_1 \le w_2 + \mu t$.

To achieve this we apply Lemma \ref{nabw} several times. By \eqref{d2w}-\eqref{L^infty-bound-w_i}-\eqref{d2w2} and Lemma \ref{nabw} we find that
 \begin{equation}\label{Cr}
 w_1 \ge -  C r \quad \mbox{and} \quad w_2 \le Cr \quad \quad \mbox{ on } \quad \mathcal P_r \cap \{x_n=0\}.
 \end{equation} 
Since $w_1 \le w_2$, we can use the pointwise $C^\alpha$ parabolic estimates at the 
origin and find that, given any $\alpha<1,$ we have
\begin{equation}\label{oscwi}
osc_{\, \mathcal P_r} w_i \le C r^ \alpha \quad \mbox{for all $r>0$.}
\end{equation}
We can iterate this argument, by working with the rescaling
$$\tilde w_1(x,t) = r^{-\alpha} w_1(rx, r^2 t),$$
which satisfies a similar equation with $\tilde \lambda=\lambda r$, and is such that \eqref{d2w}-\eqref{L^infty-bound-w_i}-\eqref{d2w2} hold for $\tilde w_1$. Again by Lemma \ref{nabw} we find
$$ \tilde w_1((x',0),t) \ge - C r^{2/3} \quad \quad \mbox{if $x' \in Q'_{1/2}$},$$
hence we improve the estimate \eqref{Cr} as
\begin{equation}\label{Cr2}
w_1 \ge -  C r^{\alpha + \frac 23} \quad \mbox{ on } \quad \mathcal P_r \cap \{x_n=0\}.
\end{equation}
The same holds for $w_2$ with $\le$ instead of $\ge$ and $C r^{\alpha + \frac 23}$ instead of $-C r^{\alpha + \frac 23}.$\\
This in turn shows that $w_i$ are pointwise $C^{\alpha+\frac 23}$ at the origin. 
 
 We modify again each $w_i$ by subtracting the corresponding function $
 \partial_n w_i(0) 
 x_n + b_i(t)$, with $\frac{d}{dt}b_i=\lambda \gamma_n \partial_n w_i(0)$. Using that $\partial_n(w_1-w_2)(0) \le 0,$ we find that
 the inequality $w_1 \le w_2 + \mu t$ is still valid on $\{x_n=0\},$ while \eqref{oscwi} holds with $r^{\alpha + 2/3}$ instead of $r^\alpha$. 
 The same argument as above implies that \eqref{Cr2} holds again with $r^{\alpha + 4/3}$ instead of $r^{\alpha + 2/3}$. Since $\alpha + 4/3 >2,$ this 
 means that $w_1(0,t) \ge - C |t| ^{1+\beta}$ and $w_2(0,t) \le  C |t| ^{1+\beta}$ for all small $t<0,$ which contradicts $w_1(0,t)\le w_2(0,t) + \mu t$.
 \qed
  
  \medskip
  
We can finally conclude the proof of Proposition  \ref{DiPr}.

\smallskip

{\it Proof of Proposition $\ref{DiPr}.$}
The interior $C^2$ estimates in the $x$ variable and the H\"older estimates up to the boundary were already proved in Proposition \ref{estimate} and 
Theorem \ref{H}.
It remains to prove existence by Perron's method. 

We assume for simplicity that the boundary data $\phi$ is Lipschitz, and the general case follows by approximation.
As usual, we define 
$$v(x,t) := \sup_{w \in \mathcal A} w(x,t),$$
where $\mathcal A$ is the class of continuous subsolutions on $\overline {\mathcal C}_1$ which have boundary data below $\phi$ on 
$\p_D \mathcal C_1$. The conclusion that $v$ solves our problem is easily checked once its continuity has been established.

\smallskip

{\it Claim.} For each $(x_0,t_0) \in \p_D \mathcal C_1$ there exists a subsolution $w_{(x_0,t_0)}$ 
which vanishes at $(x_0,t_0)$, is below the cone $-|(x,t)-(x_0,t_0)|$ on $\p _D \mathcal C_1$ and has a H\"older modulus of continuity at $(x_0,t_0)$.

This can be deduced from the proof of Theorem \ref{H}, where the H\"older continuity at the boundary was achieved using explicit barriers. 
More precisely, as in 
Lemma \ref{phi} and Lemma \ref{phiB}, for all $r \le 1/2$ we can construct a subsolution $\phi_r$ defined in $\mathcal B_{\lambda,r}^{\pm}(x_0,t_0) \cap \mathcal C_1$, where
$$\mathcal B_{\lambda,r}^{\pm}(x_0,t_0):=\{(x,t)| \quad d_\lambda((x,t),(x_0,t_0)) < r\},$$ so that
$$\phi_r=0 \quad \mbox{on} \quad \p \mathcal B_{\lambda,r}^{\pm}(x_0,t_0) \setminus (\p_D \mathcal C_1 \cup \mathcal F_1), \quad \quad 
\phi_r \le 1 \quad \mbox{on} \quad \p \mathcal B_{\lambda,r}^{\pm}(x_0,t_0)  \cap \p_D \mathcal C_1 $$
and
$$\phi_r \ge c_0 \quad \mbox{on} \quad \p \mathcal B_{\lambda,r/2}^{\pm}(x_0,t_0).$$
Then $w_{(x_0,t_0)}$ is obtained by superposing appropriate multiples of $\phi_r$ for a dyadic sequence of $r=2^{-m}$. We omit the details.

\

Using the claim we can construct a subsolution $\underline \phi$ and supersolution $\overline \phi$ which are H\"older continuous on $\p_D \mathcal C_1$ and agree with 
the boundary data $\phi$. Thus we can restrict the class $\mathcal A$ of subsolutions to satisfy 
\begin{equation}\label{undv}
\underline \phi \le w \le 
\overline \phi. 
\end{equation}This shows that the limit $v$ achieves the boundary data $\phi$ continuously. Moreover, using \eqref{undv} we can replace each $w \in \mathcal A$ 
by its maximum among appropriate $x'$ translations
$$\max_{y'}      \, \, \{w(x-(y',0),t) - C |y'|^\alpha\},$$ and remain in the same class. Therefore we may assume that $\mathcal A$ contains only subsolutions which are uniformly H\"older continuous in the 
$x'$ variable. Using this together with Remark \ref{perr}, we find that the trace of $v$ on $\{x_n=0\}$ is locally H\"older continuous in the $x',t$ variables. This means that the solution $\bar v$ to 
the interior parabolic equation in $\mathcal C_1$ with boundary data $v$ is continuous up to the boundary.
By the maximum principle $\bar v \ge w$ for any $w \in \mathcal A$, and it is straightforward to check that $\bar v \in \mathcal A$, hence $v=\bar v$ is continuous in $\overline {\mathcal C}_1$.
\qed

\section*{Acknowledgement.} The authors would like to thank Sandro Salsa and Fausto Ferrari for fruitful conversations on the topic of this paper. N.F. wishes to thank the Department of Mathematics of Columbia University for its warm hospitality.

\end{document}